\documentclass[12pt]{amsart}
\usepackage[utf8]{inputenc}
\usepackage{amsthm}
\usepackage{amsmath}
\usepackage{amssymb}
\usepackage{comment}
\usepackage{tikz}
\usepackage[margin=1.2in]{geometry}

\usepackage[T1]{fontenc}
\usepackage{enumitem}
\usepackage{setspace}
\usepackage{microtype}
\usepackage{cite}

\allowdisplaybreaks

\usepackage[colorlinks=true, pdfstartview=FitV, linkcolor=blue, citecolor=blue, urlcolor=blue]{hyperref}

\newtheorem{theorem}{Theorem}[section]

\newtheorem{lemma}[theorem]{Lemma}

\newtheorem{corollary}[theorem]{Corollary}

\newtheorem{conj}[theorem]{Conjecture}

\providecommand{\ol}{\overline}

\providecommand{\wh}{\widehat}

\providecommand{\C}{\mathbb C}

\providecommand{\R}{\mathbb R}

\providecommand{\T}{\mathcal{T}}

\providecommand{\M}{\mathcal{M}}

\title[Small-Cap Decoupling in $\R^4$]{A continuum of Small-cap decouplings and Exponential Sums for the Moment Curve in $\R^4$}

\author{Jacob Glidewell}
\address{Dept. of Mathematics \\
Indiana University \\
 Bloomington, IN, 47405-7106, USA}
\email{jaglide@iu.edu}

\begin{document}

\begin{abstract}
    We use the high-low method and wavepacket pruning to prove new small-cap decoupling estimates for the moment curve in $\R^4$. As an application, we verify a conjecture of Demeter regarding the $L^{12}$ square-root cancellation of exponential sums associated with the moment curve in $\R^4$. This provides a continuum of square-root cancellation estimates that connects the Vinogradov MVT in $\R^3$ with a result of Bourgain, related to improving the best known estimate for the Lindel\"{o}f hypothesis. 
\end{abstract}

\maketitle

\section{Introduction}

We are motivated to attack the following conjecture of Demeter \cite[Conjecture 8.1]{demeter2021l12squarerootcancellation} for the case of the moment curve $\Phi(t)= (t, t^2, t^3, t^4)$. The $p=12$ case is given in \cite[Conjecture 1.1]{demeter2021l12squarerootcancellation}. We use the notation $e(x) = e^{2\pi i x}$ for $x\in \R$.
\begin{conj}\label{conj:L12}
    Let $11\le p\le 12$. Assume $a\ge b\ge 0$ and $a+b=\frac{p}{2}-3$. Let $\Omega = [0,N]\times [0,N^2]\times [0,N^a]\times [0,N^b]$. Then, \begin{equation}\label{eq:L12expsumconj}
        \int_\Omega \left|\sum_{n\sim N}e(x\cdot \Phi\left(\frac{n}{N}\right))\right|^{p} dx \lessapprox N^{\frac{p}{2}}|\Omega|.
    \end{equation}
\end{conj}
Some special cases of this are known. Notably, the case $(p, a,b)=(12, 3,0)$ corresponds to the Vinogradov Mean Value Theorem in $\R^3$ (\cite{Wooley_2016}, \cite{bourgain2016proofmainconjecturevinogradovs}). Indeed, using periodicity and VMVT, we get the estimate for each $x_4\in [0,1]$, $$\frac{1}{N^3}\int_{[0,N^3]^3}\left|\sum_{n\sim N}e(x\cdot \Phi\left(\frac{n}{N}\right)) \right|^{12}\; dx_1dx_2dx_3 \lessapprox N^{6}\cdot N^6.$$ Integrating in $x_4$ gives \eqref{eq:L12expsumconj} in this case. The case $(p, a,b)=(12, 2,1)$ was shown by Bourgain \cite{bourgain2016decouplingexponentialsumsriemann}. In that paper, the following variant of \eqref{eq:L12expsumconj} served as the main ingredient to his improvement on the Lindel\"{o}f hypothesis: \begin{equation}\label{eq:BL12}
    \int_{\Omega}\left|\sum_{n\sim N}e(x\cdot \tilde{\Phi}\left(\frac{n}{N}\right))\right|^{12} dx \lessapprox N^{6}|\Omega|
\end{equation} with $\tilde{\Phi}(t) = (t,t^2, t^{3/2}, t^{1/2})$. In particular, Bourgain showed \eqref{eq:BL12} for any curve $\tilde{\Phi}(t)=(t, t^2, \phi_3(t), \phi_4(t))$ satisfying some nondegeneracy and torsion conditions. In \cite{demeter2021l12squarerootcancellation}, Demeter conjectured a continuum of estimates connecting these two endpoints, and proved the related cases where $p=12$ and $a\in [\frac{3}{2}, 2]$. All of these results are part of a vast program in Fourier analysis, initiated in \cite{demeter2020smallcapdecouplings}, called small-cap decouplings which have become useful for proving exponential sum estimates on spatial truncations of the torus. To the author's knowledge, no prior estimates towards Conjecture \ref{conj:L12} are known for the case when $p<12$ and $b>0$.

Bourgain's original approach in \cite{bourgain2016decouplingexponentialsumsriemann} relies on a geometric observation that a bilinear version of \eqref{eq:BL12} reduces matters to quadratic exponential sums in the plane. To show this bilinear version, he iterates $L^6$ decouplings with a "magic" change of variables. One key feature of his argument uses that the domain $[0,N]$ for $x_4$ matches the period of $x_1$. His change of variables miraculously exploits this fact to move "volume" from $x_4$ to $x_1$. See \cite[Section 3]{demeter2021l12squarerootcancellation} for a discussion of this argument.
The method in \cite{demeter2021l12squarerootcancellation} generalizes Bourgain's approach. By combining $L^6$ decoupling for the parabola with Gauss sum estimates, Demeter was able to prove the sharp estimates in the regime $a\in [\frac{3}{2}, 2]$. However, in both versions of the argument, for the cases $a> 2$, this mechanism becomes inefficient due to the loss of genuine 4-dimensional behavior at the global scale. A direct manifestation of this is that the domain in $x_4$ is now shorter than the period of $x_1$.
Rather than clever manipulations of existing decouplings, we will progress Conjecture \ref{conj:L12} by new small-cap decouplings. The new technology we use for this problem is the small-cap decoupling for the cone, developed by Guth and Maldague  \cite{maldague2022amplitudedependentwaveenvelope}, \cite{Guth_2024}.

Let $\beta \in [\frac{1}{3}, \frac{1}{2}]$ and $\alpha\in (0, \beta]$. Define \begin{multline*}
    \M^4(R^{\beta}, R) = \big\{(\xi_1, \xi_2, \xi_3, \xi_4) \; : \xi_1 \in [\frac{1}{2}, 1], |\xi_2-\xi_1^2|\le R^{-2\beta},\\ |\xi_3-3\xi_1\xi_2+2\xi_1^3|\le R^{-1}, |\xi_4+4\xi_1\xi_3-9\xi_2^2+4\xi_1^4|\le R^{-1}\big\}.
\end{multline*} We define the small-caps $\gamma$ as the partition $\M^4(R^{\beta}, R)\cap \{\ell R^{-\beta}\le \xi_1\le (\ell+1)R^{-\beta}\}$ for each integer $\ell\in R^{\beta}[\frac{1}{2}, 1]$. We will show the following generalized square-root cancellation phenomenon. Given $F:\R^4\to \C$, we define $F_\gamma$ as the Fourier projection onto $\gamma$. For clarity, we call a $S_1\times S_2\times S_3\times S_4$ box in $\R^4$ \emph{regular} if the side with length $S_i$ is parallel to the $x_i$-axis. 
\begin{theorem} \label{thm:mainResult}
   For any $F:\R^4\to \C$ Schwartz with Fourier Transform supported in $\M^4(R^{\beta}, R)= \bigcup_{\gamma}\gamma$ and any $R\times R\times R\times R^{\alpha}$ regular box $\Omega_R$ , if $\|F_\gamma\|_\infty \le 1$, we have $$\|F\|_{L^{p}(\Omega_R)}^{p}\lessapprox R^{\frac{p}{2}\beta} |\Omega_R|,$$ with $$p= 6+\frac{2(\alpha+1)}{\beta},$$ for $\alpha$ and $\beta$ in the range corresponding to $p\ge 11$.
\end{theorem}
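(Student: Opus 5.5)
We prove the estimate by a high--low frequency decomposition across scales, in the spirit of Guth--Maldague's amplitude-dependent wave envelope estimates for the cone, combined with wavepacket pruning. The key structural fact is that $\M^4(R^\beta,R)$, restricted to a fixed $\xi_1$-interval of length $R^{-\beta}$, looks after an affine rescaling like a neighborhood of a lower-dimensional moment curve. Moreover, the sets $\gamma-\gamma$ are contained in neighborhoods of a cone-like surface over the curve $(1,2t,3t^2,4t^3)$ (the tangent directions). We therefore organize frequency space into intermediate caps $\tau_k$ of length $R^{-k\epsilon}$ in $\xi_1$. We write
$$|F|^2=\sum_k \big(g_k^{\mathrm{high}}\big)+\text{low part},\qquad g_k=\sum_{\tau_k}|F_{\tau_k}|^2 .$$
Here $g_k$ has Fourier support in $\bigcup(\tau_k-\tau_k)$, and the high part of $g_k$ lives in an annular region where the supports $\tau_k-\tau_k$ are nearly disjoint. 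This lets us apply an $L^2$ orthogonality (local square function) argument to $g_k^{\mathrm{high}}$.

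First we prune wavepackets. Write $F_\gamma=\sum_{T}F_{\gamma,T}$ over dual boxes $T\subset\R^4$ of $\gamma$. Using $\|F_\gamma\|_\infty\le1$, we dyadically pigeonhole an amplitude $\lambda$ and restrict to the set $U_\lambda=\{x\in\Omega_R:|F(x)|\sim\lambda\}$. We then prune the $F_{\tau_k}$, discarding the wavepackets whose contribution on scale-$k$ tubes exceeds a threshold $\sim \lambda$ (times losses). The pruned $F_{\tau_k}$ then satisfy $\|F^{(k)}_{\tau_k}\|_\infty\lesssim$ the number of $\gamma\subset\tau_k$ (times $R^\epsilon$), and $|F|\approx|F^{(k)}|$ on $U_\lambda$. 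Next, on $U_\lambda$ we split into three cases: $|F|^2$ is dominated by the low part at the coarsest scale, by the high part $g_k^{\mathrm{high}}$ for some $k$, or by the finest scale $g_N=\sum_\gamma|F_\gamma|^2$. In the last case we have $\lambda^2\lesssim R^\epsilon\#\{\gamma\}$-type bounds together with the trivial $L^2$ bound $\int_{\Omega_R}\sum_\gamma|F_\gamma|^2\lesssim R^{\beta}|\Omega_R|$. In the low case, the low-frequency part is locally constant at a scale where only a few $\tau$ interact, and we induct on scales via the rescaling of caps.

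The main work is the high-frequency case. There
$$\lambda^{4}|U_\lambda|\lesssim\int|g_k^{\mathrm{high}}|^2\lesssim \sum_{\theta}\int|\textstyle\sum_{\tau_k\subset\theta}|F_{\tau_k}|^2*\check\eta_\theta|^2,$$
with $\theta$ running over the pieces of the cone-like set $\bigcup(\tau_k-\tau_k)$ at the appropriate scale. Each such term is controlled by the $L^\infty$ bound from pruning times an $L^1$ ($=L^2$ of $F$) bound. The window $\Omega_R$ enters at this point. In the $x_4$ direction the window has width $R^\alpha<R$, so frequencies in $\xi_4$ are only resolved at scale $R^{-\alpha}$. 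This is why the small-cap Guth--Maldague cone estimate is needed, namely for the projection of $\gamma-\gamma$ onto the $(\xi_1,\xi_2,\xi_3)$ cone after integrating out the thickened $\xi_4$ variable. Combining the bounds $\lambda^p|U_\lambda|\lesssim\lambda^{p-4}\lambda^4|U_\lambda|$ across the cases and optimizing in $\lambda$, the exponent balancing gives exactly $p=6+\frac{2(\alpha+1)}{\beta}$. The condition $p\ge11$ is needed so that the pruned $L^\infty$ bound dominates in the high case rather than the trivial bound.

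I expect the main obstacle to be the high-frequency $L^2$ step: proving the required (small-cap) square function/wave-envelope estimate for the cone generated by $\gamma-\gamma$ when $\xi_4$ is only resolved to scale $R^{-\alpha}$. My first attempt would be to freeze $x_4$-slabs of width $R^\alpha$ and apply the Guth--Maldague amplitude-dependent wave envelope estimate for the cone in $\R^3$ on each slab, taking the $\xi_4$-uncertainty as an extra $R^{1-\alpha}$ multiplicity that is absorbed by the $\alpha+1$ in the exponent.
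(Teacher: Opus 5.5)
There is a genuine gap, and it sits in the step you identify as the engine of the argument. You bound $g_k^{\mathrm{high}}$ by an $L^2$ (wave-envelope) orthogonality estimate and then by ``pruned $L^\infty$ times $L^1$''. That route only yields parabola-level information.

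First, the pruning bound you state is the wrong one. The number of $\gamma\subset\tau_k$, i.e.\ $R^\beta/R_k$, is just the triangle inequality. The useful output of pruning is the amplitude-dependent bound $\|F^{(k)}_{\gamma_k}\|_\infty\lesssim R^\beta/\lambda$, and getting it requires thresholding wavepackets at $\sim R^\beta/\lambda$, not at $\sim\lambda$.

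Even with the correct bound, your route gives $\int|g_k^{\mathrm{high}}|^2\lessapprox\sum_{\gamma_k}\|F_{\gamma_k}\|_\infty^2\|F_{\gamma_k}\|_2^2\lessapprox R_k\cdot R^{2\beta}\lambda^{-2}\cdot R^{\beta}R_k^{-1}|\Omega_R|$. That is, $\lambda^6|U_\lambda\cap\{\text{high at scale }k\}|\lessapprox R^{3\beta}|\Omega_R|$. Now grant the a priori bound $\lambda\lessapprox R_k^{1/2}R^{\beta/2}$, which in the paper comes from the stopping time. You still lose $R_k^{(p-6)/2}$ against $R^{p\beta/2}|\Omega_R|$. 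No optimization in $\lambda$ turns this into $p=6+2(\alpha+1)/\beta\ge 11$. An $L^2$ estimate for $|F|^2$-type objects is an $L^4$-type statement about $F$. It cannot capture the four-dimensional $L^8$ and three-dimensional $L^{12}$ behaviour the sharp exponent requires. Nor can the $R^\alpha$ window be absorbed as an ``$R^{1-\alpha}$ multiplicity'' in such an estimate.

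The paper's proof uses three ingredients your outline lacks.

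(1) The stopping time compares $g_k$ with the square-root-cancellation level $A^{K-k}R^\beta$, not with $\lambda^2$. On the resulting sets $\Lambda_k$ it applies the $4$-linear $L^8$ restriction estimate for the moment curve in $\R^4$ on $R^\alpha$-cubes. When $R_k>R^\alpha$, this is followed by trilinear $L^6$ restriction on $R_k\times R_k\times R_k\times R^\alpha$ boxes. The result is $\lambda^8|U_\lambda\cap\Lambda_k|\lessapprox\max\{1,R_k/R^\alpha\}R^{4\beta}|\Lambda_k|$. This is the only place where $x_4$ and $\alpha$ enter.

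(2) $|\Lambda_k|$ is then bounded via $R^{q\beta}|\Lambda_k|\lesssim\int|g_k^h|^q$ with $q$ up to $6$, not $q=2$. After the substitution $(u,v)=(t-s,t+s)$ and rescaling by $R_k$, the pieces of $g_k^h$ are Fourier supported in cone planks in the first three variables. The paper then uses Bourgain--Demeter $L^6$ cone decoupling for $R_k\le R^{1/3}$. For $R_k>R^{1/3}$ it uses Guth--Maldague small-cap cone decoupling with $q=2+2/\kappa$ and $R_k=R^{2/q}$. Both are fed by $L^6$ and $L^{12}$ moment-curve decoupling bounds for the pruned pieces. The small-cap cone estimate is needed because, for $R_k>R^{1/3}$, the rescaled planks are narrower than canonical on balls of radius $R/R_k$. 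It has nothing to do with the $x_4$-window, which this step ignores entirely.

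(3) Multilinear restriction needs transversal caps, so all of this gives only a $4$-broad estimate for $\min_i|F_{\tau_i}|$. The linear theorem then follows from a broad--narrow induction on scales, and that is exactly where $p\ge 11$ is used. Rescaling a $K^{-1}$-cap to the full curve returns the narrow part with a factor $K^{-(p-11)}$. Since the linear estimate is false for $p<11$, your stated reason for $p\ge 11$ (pruned versus trivial $L^\infty$ bound in the high case) cannot be the operative mechanism.
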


\begin{figure}[h] \label{fig:trapezoid}
    \centering
      \begin{tikzpicture}[domain=0:6]

  \draw[-] (0,0) -- (6.1,0) node[right] {$\beta$};
  \draw[-] (0,0) -- (0,6.1) node[above] {$\alpha$};

\draw (6,0.12) -- (6,-0.12);
			\node[below] at (6,-0.15) {\small 1/2};

\draw (4.8,0.12) -- (4.8,-0.12);
			\node[below] at (4.8,-0.15) {\small 2/5};

\draw (4,0.12) -- (4,-0.12);
			\node[below] at (4,-0.15) {\small 1/3};

\draw (0.12, 4) -- (-0.12, 4);
\node[left] at (0, 3.85) {\small 1/3};
\draw (0.12, 6) -- (-0.12, 6);
\node[left] at (0, 5.85) {\small 1/2};
\draw (0.12, 3) -- (-0.12, 3);
\node[left] at (0, 2.85) {\small 1/4};

\node[left] at (0, 0) {\small 0};

\draw (4, 0) -- (4, 4);

\draw (4, 4) -- (6, 6);

\draw (6,6) -- (6,0);

\draw (6,6) -- (4,0);

\draw (6,3) -- (4.8,0);

\fill[color=blue!30] (4,0) -- (4,4) -- (6,6) -- cycle;
\fill[color=red!30] (4,0) -- (6,6) -- (6,3) -- (4.8,0) -- cycle;
\fill[color=orange!30] (4.8,0) -- (6,3) -- (6,0) -- cycle;
\end{tikzpicture}
    \caption{Parameters $\alpha$ and $\beta$. In the blue region, the critical exponent is $p\in(12, 14)$. In the red, $p\in (11, 12)$. In the orange, $p<11$.}
    
\end{figure}

We call this phenomenon "square-root cancellation" because, on average, $F$ is comparable to  $R^{\beta/2}$. This is seen easily with the $L^2$-moment: $\|F\|_{L^2(\Omega_R)}\lesssim R^{\beta/2}|\Omega_R|^{1/2}$. We compare this to the triangle inequality, which gives $$|F(x)|\le \sum_{\gamma}|F_\gamma(x)|\lesssim R^{\beta}.$$ When $|F(x)|\sim R^{\beta}$, we call this "constructive interference." It is easy to see that for exponential sums, such as \eqref{eq:L12expsumconj} or in Corollary \ref{cor:NewExpSums}, constructive interference occurs in an $O(1)$-neighborhood of $x=0$. The critical exponent $p$ is the $L^p$-moment where the competing behavior of square-root cancellation and constructive interference match. For larger moments, constructive interference dominates while for smaller moments, square-root cancellation dominates. Observe that by H\"{o}lder's inequality, Theorem \ref{thm:mainResult} is true more generally for $2\le p\le 6+\frac{2(\alpha+1)}{\beta},$ so for the rest of the paper we will only consider $$p= 6+\frac{2(\alpha+1)}{\beta}.$$ Notice $p\in (10, 14]$ for the range of parameters; see Figure \ref{fig:trapezoid}. When $10< p< 11$, we expect an analogous $4$-linear estimate; however, our methods only show a $4$-broad version. A counterexample for the linear estimate when $p<11$ can be found in \cite[Theorem 8.2]{demeter2021l12squarerootcancellation}.
\begin{theorem}\label{thm:p<11}
    For any $F:\R^4\to \C$ Schwartz with Fourier Transform supported in $\M^4(R^{\beta}, R)= \bigcup_{\gamma}\gamma$, any $R\times R\times R\times R^{\alpha}$ regular box $\Omega_R$, and any caps $\tau_1, \tau_2, \tau_3, \tau_4$ with $|\tau_i|\sim 1$ and $\sim 1$-separated, if $\|F_\gamma\|_\infty \le 1$, we have $$\|\min_{1\le i\le 4}|F_{\tau_i}|\|_{L^{p}(\Omega_R)}^{p}\lessapprox R^{\frac{p}{2}\beta} |\Omega_R|,$$ with $$p= 6+\frac{2(\alpha+1)}{\beta},$$ for the parameters $\beta\in [\frac{1}{3}, \frac{1}{2}]$ and $0<\alpha\le \beta$.
\end{theorem}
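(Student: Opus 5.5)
We prove the $4$-broad estimate by a high-low induction on scales, in the style of Guth--Maldague's amplitude-dependent wave envelope argument. The quadrilinear structure lets us replace the linear Kakeya/incidence input (the part that fails for $p<11$) by a multilinear transversality estimate. Throughout, write $p=6+\frac{2(\alpha+1)}{\beta}$.

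\textbf{Step 1: dyadic pigeonholing.} By a dyadic pigeonholing in the value of $\min_i|F_{\tau_i}|$, it suffices to bound, for each $\lambda\lesssim R^\beta$, the quantity $\lambda^p|U_\lambda|$. Here
$$U_\lambda=\Big\{x\in\Omega_R:\min_i|F_{\tau_i}(x)|\sim\lambda\Big\}.$$
Small $\lambda\lesssim R^{\beta/2}$ are handled trivially by $L^2$ orthogonality, since $\|F_{\tau_i}\|_{L^2(\Omega_R)}^2\lesssim R^\beta|\Omega_R|$. So we take $\lambda\gg R^{\beta/2}$.

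\textbf{Step 2: scales and low-frequency estimates.} We introduce intermediate canonical scales $R_k$ between $1$ and $R$, with moment-curve caps $\theta$ of length $R_k^{-1/4}$ refining down to the small caps $\gamma$. At each scale we form square functions $g_k=\sum_{\theta}|F_\theta|^2$ and split $g_k=g_k^{\ell}+g_k^{h}$ by Fourier support near the origin at the dual scale. The low part is controlled by the next coarser square function via the local constancy and nesting of the moment-curve neighborhoods. At the canonical scales $\theta\subset\M^4(R_k^{1/4},R_k)$, this uses the $\R^4$ cone/moment-curve geometry from the Guth--Maldague small-cap cone decoupling. In the small-cap range it uses the anisotropic blocks $R^{\beta}\times R^{2\beta}\times R\times R$, truncated to $\Omega_R$ in the $x_4$ direction at height $R^\alpha$.

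\textbf{Step 3: high-frequency estimates and the broad input.} The high parts satisfy square-function $L^2$ orthogonality, giving $\|g_k^h\|_2^2\lesssim\sum_{\theta}\|F_\theta\|_4^4$. On the broad set we use the multilinear (Brascamp--Lieb) Kakeya inequality for the four transverse families of wave packets of $F_{\tau_i}$. This says that where all four $|F_{\tau_i}|$ are large, the four families of planks intersect transversally. It yields $\lambda^{p}|U_\lambda|\lessapprox\prod_i\big(\ldots\big)^{1/4}$ in terms of $\|F_{\tau_i,\gamma}\|_2$ and $\|F_{\tau_i,\gamma}\|_\infty$. Together with wavepacket pruning (discarding packets of amplitude $\gg\lambda/\#\gamma$ and re-forming $F_\gamma$ with $\|F_\gamma\|_\infty\le1$ preserved), this gives a bound of the form $\lambda^{p}|U_\lambda|\lesssim\lambda^{p-4}\cdot(\text{scale factor})\cdot\sum_\gamma\|F_\gamma\|_2^2$.

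\textbf{Step 4: exponent bookkeeping and the obstacle.} The exponent $p$ is the one for which the powers of $R$ balance: the measure of the intersection of the small-cap planks inside $\Omega_R$ scales like $R^{3}R^{\alpha}$ divided by the number $R^{\beta}$ of caps. Checking that $\lambda^{p-2}\lesssim R^{(\frac p2-1)\beta}$ against the plank incidence count gives exactly $p=6+\frac{2(\alpha+1)}\beta$. The main obstacle is the smallest scale, where $x_4$ is truncated to $R^\alpha<R$. There the planks are no longer fully transverse in $\Omega_R$, and the $4$-linear Kakeya constant must be verified uniformly. I would first try to rescale $x_4$ by $R^{1-\alpha}$ and apply the Brascamp--Lieb theorem with the resulting degenerate datum, checking finiteness of the constant for all $\beta\in[\frac13,\frac12]$ and $0<\alpha\le\beta$. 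Here no linear Kakeya is needed, which is why the range $p<11$ is accessible in this broad form.
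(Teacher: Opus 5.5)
Your outline has the right skeleton (pigeonholing in $\lambda$, intermediate scales, square functions split into high and low parts, pruning, a $4$-linear restriction input). However, it lacks the ingredient that actually produces the exponent $p$.

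\textbf{The main gap: the high part.} In Step~3 the only information you extract from the high part is $L^2$ orthogonality, $\|g_k^h\|_2^2\lesssim\sum\|F_{\gamma_{k+1}}\|_4^4$. Run this through the bookkeeping. On the stopping-time set $\Lambda_k$, where $g_k\approx R^\beta$ and the high part dominates, the pruned bound $\|F^{(k+1)}_{\gamma_{k+1}}\|_\infty\lesssim R^\beta/\lambda$ gives $|\Lambda_k|\lessapprox R^{\beta}\lambda^{-2}|\Omega_R|$. Combined with the multilinear bound \eqref{eq:restrictionConclusion}, this is only the $L^{10}$ statement $\lambda^{10}|U_\lambda\cap\Lambda_k|\lessapprox\max\{1,R_k/R^\alpha\}R^{5\beta}|\Omega_R|$. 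Since $p>10$ for every admissible $(\alpha,\beta)$, upgrading via $\lambda\lesssim R_k^{1/2}R^{\beta/2}$ loses $R_k^{(p-10)/2}\max\{1,R_k/R^\alpha\}$, which is $R^{1-\beta}$ when $R_k\sim R^\beta$.

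The missing idea is the cone structure of the high part. Fix $\gamma_k$ and sum $F_{\gamma_{k+1}}\overline{F_{\gamma_{k+1}'}}$ over non-adjacent $\gamma_{k+1},\gamma_{k+1}'\subset\gamma_k$. In the first three frequency variables, after the substitution $u=t-s$, $v=t+s$ and a dilation by $R_k$, this sum is Fourier supported in a $1\times R_k^{-1}\times R_k^{-2}$ plank of the cone $\{(u,uv,uv^2)\}$ (Lemma~\ref{lem:highOrthog}). So the high part can be decoupled:
\begin{itemize}
\item when $R_k\le R^{1/3}$, by the Bourgain--Demeter $L^6$ cone decoupling, which together with Lemma~\ref{lem:L6andL12estimates} gives $|\Lambda_k|\lessapprox R^{3\beta}\lambda^{-6}|\Omega_R|$ instead of $R^{\beta}\lambda^{-2}|\Omega_R|$;
\item when $R^{1/3}<R_k\le R^\beta$, by the Guth--Maldague small-cap cone decoupling at the exponent $q$ with $R_k=R^{2/q}$.
\end{itemize}
You do cite Guth--Maldague in Step~2, but you attach it to the low part. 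The low part is in fact bounded pointwise by the finer square function $g_{k+1}$ via Cauchy--Schwarz. Without the cone decoupling, the plank count in Step~4 is only a heuristic for the critical exponent, not a chain of inequalities.

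\textbf{Other pieces that need correcting.}

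\emph{Pruning threshold.} Your threshold is inverted. The paper discards, at every intermediate scale, wave packets of $F^{(k+1)}_{\gamma_k}$ whose amplitude exceeds $A^{K-k+1}R^\beta/\lambda$. This is what makes the discarded part at most $A^{-(K-m+1)}(\lambda/R^\beta)g_m\lesssim\lambda/A$ on $\Lambda_k$ (Lemma~\ref{lem:PassToPruned}). A threshold $\lambda/\#\gamma\approx\lambda R^{-\beta}\le 1$ would throw away most of $F$. Also, pruning only at the finest scale does not give the bounds $\|F^{(k)}_{\gamma_k}\|_\infty\lesssim R^\beta/\lambda$ that drive the $|\Lambda_k|$ estimates.

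\emph{Stopping time.} You need the stopping-time selection of the sets $\Lambda_k$, so that $g_{k+1}$ is controlled there and the high part dominates. You also need the resulting relation $\lambda\lesssim R_k^{1/2}R^{\beta/2}$ of Lemma~\ref{lem:upgrader}, which is what makes the case analysis over scales close. Passing to the pruned $F^{(k+1)}_{\tau_i}$ in that lemma is what forces the $4$-broad rather than $4$-linear formulation. The restriction $p\ge 11$ in Theorem~\ref{thm:mainResult} comes from the $K^{11-p}$ loss in the narrow rescaling, not from a missing linear Kakeya estimate.

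\emph{The $x_4$ truncation.} This matters at the \emph{large} spatial scales $R_k>R^\alpha$, and a degenerate Brascamp--Lieb datum will not handle it. Rescaling $x_4$ by $R^{1-\alpha}$ turns the curve into $(t,t^2,t^3,R^{\alpha-1}t^4)$, whose $4$-fold tangent determinant is $\sim R^{\alpha-1}$, so the constant is not uniform in $R$. The paper instead proceeds in three steps:
\begin{itemize}
\item apply the $4$-linear $L^8$ restriction on $R^\alpha$-cubes;
\item use local constancy of the square function over caps of length $R^{-\alpha}$;
\item apply the $3$-linear $L^6$ restriction on $R_k\times R_k\times R_k\times R^\alpha$ boxes.
\end{itemize}
This costs only the factor $\max\{1,R_k/R^\alpha\}$ in \eqref{eq:restrictionConclusion}, and after that $x_4$ plays no role.
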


As a corollary of Theorem \ref{thm:mainResult}, we verify  Conjecture \ref{conj:L12} in the remaining range for $p=12$, make progress for $p\in [11, 12)$, and give new exponential sum estimates for $p\in (12, 14]$. These estimates with $c_n=1$ and constructive interference on $[0, \frac{1}{100}]^4$ show that the critical exponent $p$ in Theorem \ref{thm:mainResult} is sharp.

\begin{corollary}\label{cor:NewExpSums} Let $11\le p\le 14$. Assume $a\in [2,3]$, $b\in (0,1]$ and $a+b=\frac{p}{2}-3$.  Let $\Omega = [0,N]\times [0,N^2]\times [0,N^a]\times [0,N^b]$. Then, for any $|c_n|\le 1$,  $$\int_\Omega \left|\sum_{n\sim N}c_ne(x\cdot \Phi\left(\frac{n}{N}\right))\right|^{p} dx \lessapprox N^{\frac{p}{2}}|\Omega|.$$
\end{corollary}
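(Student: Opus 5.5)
The plan is to deduce the corollary from Theorem \ref{thm:mainResult} by choosing parameters so that the frequency points $\Phi(n/N)$ become one point per small cap, and then using periodicity to move from $\Omega$ to a regular box $\Omega_R$.

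\textbf{Choice of parameters.} We need one frequency point per small cap $\gamma$. The caps have $\xi_1$-length $R^{-\beta}$ and the points $n/N$ are $1/N$-separated, so we want $R^{\beta}=N$. To make the third side of the box match $[0,N^a]$, set $R=N^a$, $\beta=1/a$ and $\alpha=b/a$. Then:
\begin{itemize}
\item $a\in[2,3]$ gives $\beta\in[\frac13,\frac12]$;
\item $b\in(0,1]$ gives $0<\alpha\le\beta$;
\item $\frac{\alpha+1}{\beta}=a+b=\frac p2-3$, so $p=6+\frac{2(\alpha+1)}{\beta}$ is exactly the critical exponent of Theorem \ref{thm:mainResult}, and $p\ge 11$ places $(\alpha,\beta)$ in its admissible range.
\end{itemize}
The regular box is $\Omega_R=[0,N^a]^3\times[0,N^b]$.

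\textbf{Passing from $\Omega$ to $\Omega_R$.} Write $E(x)=\sum_{n\sim N}c_ne(x\cdot\Phi(n/N))$. It is $N$-periodic in $x_1$ and $N^2$-periodic in $x_2$. Since $N\le N^2\le N^a$, averaging $|E|^p$ over $x_1\in[0,N^a]$ and $x_2\in[0,N^a]$ gives, up to constants, the same as averaging over $[0,N]\times[0,N^2]$. (Use $N^a/N^2$ periods, rounded, losing at most a factor $2$.) The $x_3,x_4$ ranges already agree. Hence
\[
|\Omega|^{-1}\int_\Omega|E|^p\lesssim |\Omega_R|^{-1}\int_{\Omega_R}|E|^p ,
\]
and it suffices to prove $\int_{\Omega_R}|E|^p\lessapprox N^{p/2}|\Omega_R|=R^{\frac p2\beta}|\Omega_R|$.

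\textbf{Building a Schwartz function.} Let $\eta$ be a Schwartz function with $|\eta|\gtrsim1$ on $\Omega_R$ and $\wh\eta$ supported in $[-cN^{-a},cN^{-a}]^3\times[-cN^{-b},cN^{-b}]$. Set $F=E\eta$. Then $\wh F$ is supported in the union of these small boxes centered at $\Phi(n/N)$.

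\textbf{Checking the hypotheses.} On the curve $\xi=(t,t^2,t^3,t^4)$ one computes
\[
\xi_3-3\xi_1\xi_2+2\xi_1^3=0,\qquad \xi_4+4\xi_1\xi_3-9\xi_2^2+4\xi_1^4=0,
\]
so every $\Phi(n/N)$ with $n/N\in[\frac12,1]$ lies in $\M^4(R^\beta,R)$. The tolerances are $R^{-2\beta}=N^{-2}$ for $\xi_2$ and $R^{-1}=N^{-a}$ for $\xi_3,\xi_4$.

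\emph{Main obstacle.} The $\xi_4$-width of $\wh\eta$ is $N^{-b}$, which is larger than the tolerance $R^{-1}$. I expect this to be the main point needing care. What I would try first: the theorem is stated for a box with short side $R^\alpha$ in $x_4$. Its natural frequency uncertainty, $R^{-\alpha}$ in $\xi_4$, is therefore implicitly permitted, either through locally constant and uncertainty considerations or via a version of the theorem allowing $R^{-\alpha}$-thickening in $\xi_4$.

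If that is not directly available, a fallback is to cover $\Omega_R$ by $R\times R\times R\times R$-type translates, using $x_4$-periodicity $N^4\ge R$. However, this changes the box. So I would instead verify that the proof of Theorem \ref{thm:mainResult} tolerates $\wh F$ supported in the $R^{-\alpha}$-neighborhood in $\xi_4$, which is the natural setting for an $R^\alpha$-thin box.

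The remaining hypotheses are routine:
\begin{itemize}
\item Since $n\sim N$, $\xi_1\in[\frac12,1]$ after harmless dilation or splitting into $O(1)$ ranges.
\item Each cap $\gamma$ has $\xi_1$-length $N^{-1}$, so it contains $O(1)$ of the points $n/N$. Splitting the sum into $O(1)$ subsums, so that each $\gamma$ meets at most one small box, gives $F_\gamma=c_ne(x\cdot\Phi(n/N))\eta$ and hence $\|F_\gamma\|_\infty\lesssim1$.
\end{itemize}

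\textbf{Conclusion.} Applying Theorem \ref{thm:mainResult} to each subsum, and using $|E|\lesssim|F|$ on $\Omega_R$, gives $\int_{\Omega_R}|E|^p\lessapprox R^{\frac p2\beta}|\Omega_R|=N^{p/2}|\Omega_R|$. The periodicity comparison above then transfers this bound to $\Omega$.
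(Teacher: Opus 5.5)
Your reduction is the paper's own: the same parameters $R=N^a$, $\beta=1/a$, $\alpha=b/a$, the same periodicity step to $\Omega_R=[0,N^a]^3\times[0,N^b]$, and the same idea of multiplying by a bump and invoking Theorem \ref{thm:mainResult}. Your checks that $\Phi(n/N)$ satisfies the defining equations of $\mathcal{M}^4(R^\beta,R)$, and that $\|F_\gamma\|_\infty\lesssim 1$, are fine.

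\textbf{The gap.} The problem is the step you flag as the main obstacle and then leave open. You adapt $\eta$ to the thin box, so $\widehat{F}$ has $\xi_4$-width $N^{-b}$ around each $\Phi(n/N)$. This is far larger than the $\xi_4$-tolerance $R^{-1}=N^{-a}$ of $\mathcal{M}^4(R^\beta,R)$, since $a\ge 2>1\ge b$. So the Fourier-support hypothesis of Theorem \ref{thm:mainResult} fails, and the theorem cannot be invoked. Your proposed remedy is that the theorem implicitly tolerates an $R^{-\alpha}$-thickening in $\xi_4$. The paper neither states nor proves this, and checking it would mean re-examining the whole argument (cap geometry, pruning, local constancy) with caps thickened in $\xi_4$. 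As written, the proof is incomplete exactly where the theorem is applied.

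\textbf{The fix.} The bump need not be localized to $\Omega_R$ at all. The theorem only integrates $|F|^p$ over $\Omega_R$ and only needs $\|F_\gamma\|_\infty\lesssim 1$. Take $\eta$ with $\widehat{\eta}\ge 0$, supported in $[0,\frac{1}{100R}]^4$, normalized so that $\eta(0)=1$. Then:
\begin{itemize}
\item $|\eta|\le 1$ everywhere;
\item $|\eta|\gtrsim 1$ on the whole cube $[0,N^a]^4\supset\Omega_R$;
\item $\widehat{F}$ sits in $\frac{1}{100R}$-boxes around the points $\Phi(n/N)$, hence inside $\mathcal{M}^4(R^\beta,R)$.
\end{itemize}
This is precisely the paper's choice ($f\gtrsim 1_{B_0}$ with $B_0=[0,N^a]^4$), and with it the rest of your argument goes through unchanged.

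Your fallback of passing to $R$-cubes is unnecessary: the integration domain stays $\Omega_R$, and only the auxiliary bump is adapted to the larger cube. It is also unavailable, since the $x_4$-period of the sum is $N^4$, not $N^b$, so the thin slab cannot be traded for an $R$-cube by periodicity.
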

Therefore, the remaining open range in Conjecture \ref{conj:L12} is $p\in [11, 12)$ with $\frac{p-6}{4}\le a<2$. This is analogous to the range for $p=12$ which Demeter proved in \cite{demeter2021l12squarerootcancellation}, which our methods seem to not be able to address. These correspond to decouplings with parameters $\alpha>\beta$ in the language of Theorem \ref{thm:mainResult}.

To establish Theorem \ref{thm:p<11}, we use a high-low stopping time argument with wavepacket pruning, combining the approaches of Guth and Maldague \cite{Guth_2024} and Maldague and Oh \cite{maldague2024smallcapdecouplingmoment}. By a standard broad-narrow argument, we get the linear inequalities in Theorem \ref{thm:mainResult}. The main novelity of this paper is the integration and simplification of the arguments from \cite{Guth_2024} and \cite{maldague2024smallcapdecouplingmoment}. Further, we elaborate on the lack of linear inequalities for $p<11$. One obstruction that we see explicitly is that the rescaling needed for the reduction is inefficient when $p<11$.

\medskip
\noindent
{\it Acknowledgments}. The author would like to thank his advisor, Ciprian Demeter, for introducing him to this problem and for encouragement throughout the completion of this project.

\section{Notation and Outline}\label{sec:Outline}
We write $A\lesssim B$ or $A= O(B)$ if $A\le cB$ for some universal constant (possibly large) $c>0$. Also, we write $A\sim B$ if $A\lesssim B$ and $B\lesssim A$. We write $A\lessapprox B$ if $A\lesssim_\epsilon R^{\epsilon} B$ for any $\epsilon>0$. Finally, we write $A\approx B$ if $B\lesssim A\lessapprox B $.

Now, we define the operational notation for our argument. Fix $\epsilon>0$. Define intermediate scales $R_k = R^{k\epsilon}$ for $k=0, \dots, K$ where $R_K\sim R^{\beta}$ with $K\sim \frac{1}{\epsilon}$. Now, we define the intermediate caps. \begin{enumerate}
    \item[i)] For $R_k\le R^{1/4}$, let $\gamma_k$ be the caps with dimensions $$\sim R_k^{-1}\times R_k^{-2}\times R_k^{-3}\times R_k^{-4}.$$

    \item[ii)] For $R^{1/4}<R_k\le R^{1/3}$, let $\gamma_k$ be the caps with dimensions $$\sim R_k^{-1}\times R_k^{-2}\times R_k^{-3}\times R^{-1}.$$

    \item[iii)] Finally, for $R^{1/3}<R_k\le R^{\beta}$, let $\gamma_k$ be the caps with dimensions $$\sim R_k^{-1}\times R_k^{-2}\times R^{-1}\times R^{-1}.$$
\end{enumerate} 
These coincide with the cap sizes in \cite{Guth_2024} in the first three dimensions. Fix $A>0$ a constant to be determined, depending on $\epsilon$. There are two places in the argument (namely, Lemma \ref{lem:PassToPruned} and estimating the low piece in Section \ref{sec: 8+4=12}) where the size of $A$ is determined. Fix $\tau_1, \tau_2, \tau_3, \tau_4$ caps with $|\tau_i|\sim 1$ and $1$-separated. For $\lambda>0$, let $$U_{\lambda} = \{x\in \Omega_R : \lambda\le\min_{1\le i\le 4}|F_{\tau_i}(x)|\le 2\lambda \}.$$ We normalize so that $\max_\gamma\|F_\gamma\|_\infty =1$. By dyadic pigeonholing, we have for some $0<\lambda\le R^{\beta}$ ,
\begin{equation}\label{eq:4-linDyadicPig}
\|\min_{1\le i\le 4}|F_{\tau_i}|\|_{L^{p}(\Omega_R)}^{p}\lessapprox \lambda^p|U_{\lambda}|.
\end{equation} We assume without loss of generality that $\lambda\ge R^{\beta/2}$. Otherwise, Theorem \ref{thm:p<11} is immediate. We will make use of this assumption in Section \ref{sec:proof}.

At this point, let us give a heuristic overview of the proof of Theorem \ref{thm:p<11}; the broad-to-linear reduction for Theorem \ref{thm:mainResult} is standard and is given in Section \ref{sec:multRed}. The argument for Theorem \ref{thm:p<11} nicely decomposes into two major phases: (a) finding local square-root cancellation and (b) gaining globally with a high-low decomposition. \begin{enumerate}
    \item[Phase (a):] Our initial goal is to identify where in $\Omega_R$ square-root cancellation is occurring locally. This is quantified by square functions at each scale $R_k$. Indeed, we would like $F$ to be comparable to $$\left(\sum_{\gamma}|F_\gamma|^2\right)^{1/2}\lesssim R^{\frac{\beta}{2}}.$$ However, this is not always true. Instead, we pick out the spatial places where intermediate (squared) square functions are comparable to square-root cancellation; that is, \begin{equation}\label{eq:sqrtCancellation}
        g_k=\sum_{\gamma_k} |F_{\gamma_k}|^2 \sim R^{\beta}.
    \end{equation} Each of these square functions are essentially constant on balls of radius $R_k$, so we decompose via a stopping time selection $\Omega_R=\bigcup \Lambda_k$ with each $\Lambda_k$ a union of $R_k$-cubes where \eqref{eq:sqrtCancellation} occurs. To pass from $F$ to these intermediate square functions, we use the Multilinear Restriction Theorem for the moment curve (see \cite[Ex. 6.33]{demeter2020fourier} or Theorem \ref{thm:MultRestrictionMomentCurve} in the next section). This preserves the square-root cancellation from $F$ to the intermediate square function on $\Lambda_k$, giving the estimate which "morally" says $$\lambda^8|U_{\lambda}\cap \Lambda_k|\lessapprox R^{4\beta}|\Lambda_k|.$$

    \item[Phase (b):] Now, it suffices to get a gain on the size of $\Lambda_k$. Because of the stopping time selection and our expectation that $F$ satisfies square-root cancellation on most of $\Omega_R$, we expect $|\Lambda_k|$ to decrease as $R_k$ increases. For instance, when $p=12$, we would like to show $$|\Lambda_k|\lessapprox \frac{R^{2\beta}}{\lambda^4}|\Omega_R|.$$ As we will see in Lemma \ref{lem:upgrader}, the critical $\lambda$ to consider is $\lambda\sim R_k^{1/2}R^{\frac{\beta}{2}}.$ Thus, the estimate we would like to show is "morally" $$|\Lambda_k|\lessapprox \frac{1}{R_k^2}|\Omega_R|,$$ which matches our expectation. To prove such an estimate, we convert to estimating a $L^q$-moment of the intermediate (squared) square function: $$R^{q\beta}|\Lambda_k|\sim \int_{\Lambda_k}\left(\sum_{\gamma_k}|F_{\gamma_k}|^2\right)^{q} = \int_{\Lambda_k}|g_k|^q.$$ Because of the stopping time selection of $\Lambda_k$, we have control over the next intermediate square function $$g_{k+1}=\sum_{\gamma_{k+1}}|F_{\gamma_{k+1}}|^2.$$ We relate the two square functions by the following identity:

    \begin{align*}
g_k=\sum_{\gamma_k}|F_{\gamma_k}|^2&= \sum_{\gamma_k}\sum_{\gamma_{k+1}, \gamma_{k+1}^\prime\subset \gamma_k}F_{\gamma_{k+1}}\ol{F_{\gamma_{k+1}^\prime}}\\
        &= \sum_{\gamma_k}\sum_{\gamma_{k+1}\sim \gamma_{k+1}^\prime\subset \gamma_k}F_{\gamma_{k+1}}\ol{F_{\gamma_{k+1}^\prime}}+\sum_{\gamma_k}\sum_{\gamma_{k+1}\not\sim \gamma_{k+1}^\prime\subset \gamma_k}F_{\gamma_{k+1}}\ol{F_{\gamma_{k+1}^\prime}},
    \end{align*} where $\sim$ is the adjacency relation (i.e $\gamma_{k+1}\sim \gamma_{k+1}^\prime$ if they are the same or adjacent). We can bound the first term by $g_{k+1}$; this is called the "low" piece. It suffices to control the second term; this is called the "high" piece. We will explain in Section \ref{sec: 8+4=12} that the terms in the high piece exhibit additional orthogonality, due to the fact that their frequency supports mimic that of cone planks. This is the main mechanism of our proof, mirroring that of \cite{Guth_2024}. Since we are proving estimates over a range of parameters, some combinations of $\alpha$ and $\beta$ have access to better or worse outcomes from the Multilinear Restriction Theorem (Part (a)) and cone decouplings (Part (b)). From this obstacle, we finally need to show how these estimates combine to give the correct estimates for Theorem \ref{thm:p<11}. We must consider three different regimes of $R_k$ where slightly different arguments are needed.
\end{enumerate}
A key technical tool used in the argument is "wavepacket pruning" (see Lemma \ref{lem:PruningProperties}). This tool is commonly used in tandem with the high-low stopping time argument. It "prunes off" irrelevant wavepackets with large amplitudes relative to each intermediate square function. One should observe that wavepacket pruning is not strictly needed for every iteration of this kind of argument. For instance, \cite{maldague2024smallcapdecouplingmoment} does not make use of any pruning. For similar reasons, Theorem \ref{thm:mainResult} with $\beta = \frac{1}{3}$ does not need wavepacket pruning. However, the other cases $\beta>\frac{1}{3}$ need this tool and so we use it for all cases. One should note that pruning causes the need for a $4$-broad formulation, rather than a $4$-linear formulation; see Lemma \ref{lem:upgrader}.

A striking similarity between our argument here and Bourgain's original argument is that the $x_4$ variable becomes irrelevant after an initial "local" step. In Bourgain's argument, the local step was an initial application of the $L^6$ decoupling inequality on $R^{1/2}$-cubes while in our argument, this was part (a) described above. It remains unclear exactly why (other than for ad-hoc reasons) one may ignore $x_4$ in some steps and still produce sharp estimates. One may hope that a general theory of these decouplings may shed some light on this dependence. 

The order of this paper is as follows. Section \ref{sec:oldDecouplings} gives a review of the known decouplings we will be using for our argument. Sections \ref{sec:Pruning} and \ref{sec:stopTime} present the pruning and stopping time setup for the argument, respectively. In section \ref{sec:multRestriction}, we will perform our initial $L^8$ estimate using multilinear restriction estimates for the moment curve. This exploits the 4-dimensional setting of the problem. Sections \ref{sec: 8+4=12} and \ref{sec:proof} will present how we use decoupling for the cone in $\R^3$ to enhance our estimate from $L^8$ to $L^p$. This enhancement mechanism ignores $x_4$ in our argument, so it is likely that other estimates beyond $L^{14}$ or in the remaining open range of Conjecture \ref{conj:L12} may be progressed by utilizing $x_4$ via new cone-like geometry in $\R^4$. Therefore, we give the heuristic that the estimates of Corollary \ref{cor:NewExpSums} are locally 4-dimensional but globally 3-dimensional. Finally, section \ref{sec:multRed} presents the broad-to-linear reduction and the necessity of $p\ge 11$. Section \ref{sec:FinalCor} gives the short proof of Corollary \ref{cor:NewExpSums} using Theorem \ref{thm:mainResult}.

\section{Known Restriction and Decoupling Estimates}\label{sec:oldDecouplings}

Let us recall some known restriction estimates and decouplings that we will use for our argument; namely, the mulitlinear restriction estimate for the moment curve \cite[Ex. 6.33]{demeter2020fourier}, the Bourgain-Demeter-Guth decoupling for the moment curve \cite{bourgain2016proofmainconjecturevinogradovs}, the Bourgain-Demeter canonical decoupling for the cone \cite{demeterCanonicalDecoupling2015}, and the Guth-Maldague small-cap decoupling for the cone \cite{maldague2022amplitudedependentwaveenvelope}.

Define $\M^n(R)$ to be the $\frac{1}{R}$-neighborhood of the moment curve in $\R^n$: $$r(t)=(t, t^2, t^3, \ldots, t^n), \; t\in [\frac{1}{2}, 1].$$ We define the canonical caps $\gamma$ as the partition $\M^n(R)\cap \{\ell R^{-1/n}\le \xi_1\le (\ell+1)R^{-1/n}\}$ for each integer $\ell\in R^{1/n}[\frac{1}{2}, 1]$. Let $\{\tilde{\tau_{i}}\}_{1\le i\le n}$ be caps with $|\tilde{\tau_i}|\sim 1$ and $\sim 1$-separated. 

\begin{theorem}\label{thm:MultRestrictionMomentCurve} \cite[Ex. 6.33]{demeter2020fourier}
    For any $F:\R^n\to \C$ with Fourier Transform supported in $\M^n(R)$ and any ball $B_R$ of radius $R$, we have $$\frac{1}{|B_R|}\int_{B_R}\left|\prod_{i=1}^nF_{\tilde{\tau_i}}\right|^{2}\lesssim \prod_{i=1}^n \left(\frac{1}{|B_R|}\int_{B_R}|F_{\tilde{\tau_i}}|^2\right).$$
\end{theorem}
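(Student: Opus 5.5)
The plan is to prove the estimate by a local orthogonality argument: on $B_R$ we replace each $F_{\tilde{\tau_i}}$ by a sum of modulated pieces at frequency scale $R^{-1}$, and then use transversality of the $n$ arcs to show that the frequency sums $\xi_1+\cdots+\xi_n$ are essentially distinct. First, fix a Schwartz weight $w_{B_R}$ with $w_{B_R}\gtrsim 1$ on $B_R$ and $\widehat{w_{B_R}}$ supported in a ball of radius $R^{-1}$. The left side is then bounded by $|B_R|^{-1}\int |\prod_i F_{\tilde{\tau_i}} w_{B_R}^{1/n}|^2$. Set $G_i=F_{\tilde{\tau_i}}w_{B_R}^{1/n}$, which has Fourier support in the $O(R^{-1})$-neighborhood of the arc $r(I_i)$, where $I_i\subset[\frac12,1]$ are the $\sim1$-separated parameter intervals of the $\tilde{\tau_i}$. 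Partition this neighborhood into $R^{-1}$-cubes $\theta$ centered at points $r(t_\theta)$, with the $t_\theta$ spaced $R^{-1}$ apart, and write $G_i=\sum_{\theta\subset\tilde{\tau_i}}G_{i,\theta}$.

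By Plancherel,
$$\int\Big|\prod_{i}G_i\Big|^2=\Big\|\sum_{\theta_1,\dots,\theta_n}\widehat{G_{1,\theta_1}}*\cdots*\widehat{G_{n,\theta_n}}\Big\|_2^2 .$$
Each summand is supported in the $O(R^{-1})$-ball around $\sum_i r(t_{\theta_i})$. The key claim is that for any point $\xi$, only $O(1)$ tuples $(\theta_1,\dots,\theta_n)$ with $\theta_i\subset\tilde{\tau_i}$ have $|\sum_i r(t_{\theta_i})-\xi|\lesssim R^{-1}$. Granting this, Cauchy--Schwarz bounds the left side by
$$\lesssim\sum_{\theta_1,\dots,\theta_n}\Big\|\prod_i G_{i,\theta_i}\Big\|_2^2\le\sum_{\theta_1,\dots,\theta_n}\prod_i\|G_{i,\theta_i}\|_{L^\infty}^2\,\cdot R^n .$$
Since $\widehat{G_{i,\theta}}$ lives in an $R^{-1}$-cube, each $G_{i,\theta}$ is locally constant at scale $R$. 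Hence $\|G_{i,\theta}\|_\infty^2\lesssim R^{-n}\int|G_{i,\theta}|^2$ up to Schwartz tails, which can be absorbed with a slightly larger weight. The bound becomes $R^{-n(n-1)}\prod_i\sum_\theta\|G_{i,\theta}\|_2^2$. By orthogonality, $\sum_\theta\|G_{i,\theta}\|_2^2\lesssim\|G_i\|_2^2\lesssim\int|F_{\tilde{\tau_i}}|^2w_{B_R}^{2/n}$. Dividing by $|B_R|\sim R^n$ and matching the normalizations $|B_R|^{-1}$ on each factor gives $\prod_i|B_R|^{-1}\int|F_{\tilde{\tau_i}}|^2 w_{B_R}$. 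Replacing the weighted averages by averages over $B_R$ is the usual local-constancy step: $|\widehat{F_{\tilde{\tau_i}}}|$ is controlled by a convolution over $R$-balls, so one can either prove the weighted version and average over translates, or note that the statement is invariant under this standard reduction.

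The main obstacle is the counting claim, i.e. a quantitative injectivity of
$$\Sigma:(t_1,\dots,t_n)\mapsto \sum_{i=1}^n r(t_i),\qquad t_i\in I_i .$$
The $j$-th coordinate of $\Sigma$ is the power sum $\sum_i t_i^j$. By Newton's identities, these power sums determine the elementary symmetric polynomials, and hence the multiset $\{t_i\}$. Because the $I_i$ are separated, the labelled tuple is then determined, so $\Sigma$ is globally injective on $I_1\times\cdots\times I_n$. For stability, the Jacobian of $\Sigma$ is $n!\,\prod_{i<j}(t_j-t_i)$ up to the Vandermonde structure, which is $\gtrsim 1$ by separation. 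A compactness plus inverse function argument then gives $|\Sigma(t)-\Sigma(t')|\gtrsim|t-t'|$ on the product of the (slightly enlarged) intervals. Consequently, $|\Sigma(t)-\Sigma(t')|\lesssim R^{-1}$ forces $|t-t'|\lesssim R^{-1}$, which leaves $O(1)$ choices of tuple. The constants depend only on $n$ and the separation.
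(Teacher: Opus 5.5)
\textbf{Verdict.} Your core argument is correct and is the standard one, but two local steps need repair and the final step fails. The paper gives no proof of this estimate; it quotes it from Demeter's book. The standard mechanism, which you use, is Plancherel, bounded overlap of the sets $\theta_1+\cdots+\theta_n$, then Cauchy--Schwarz. The bounded overlap comes from injectivity of $\Sigma(t)=\sum_i r(t_i)$ together with the Vandermonde Jacobian $n!\prod_{i<j}(t_j-t_i)$. Your Newton-identity plus compactness argument for the counting claim is correct.

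\textbf{Two local repairs.}
\begin{enumerate}
\item $w_{B_R}^{1/n}$ generally does not have compact Fourier support, so $G_i=F_{\tilde{\tau_i}}w_{B_R}^{1/n}$ need not be Fourier supported near the arc. Instead, take one $\phi_{B_R}$ with $\operatorname{supp}\widehat{\phi_{B_R}}\subset B(0,R^{-1})$ and $|\phi_{B_R}|\gtrsim 1$ on $B_R$, and set $G_i=F_{\tilde{\tau_i}}\phi_{B_R}$.
\item The inequality $\|\prod_i G_{i,\theta_i}\|_2^2\le R^n\prod_i\|G_{i,\theta_i}\|_\infty^2$ is not justified, because the product is not supported in an $R$-ball. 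You do not need it. Use H\"older with one factor in $L^2$, together with the exact bound $\|G_{i,\theta}\|_\infty\le|\theta|^{1/2}\|G_{i,\theta}\|_2$ (Fourier inversion plus Cauchy--Schwarz). This gives $\|\prod_iG_{i,\theta_i}\|_2^2\lesssim R^{-n(n-1)}\prod_i\|G_{i,\theta_i}\|_2^2$ directly, with no tails to absorb.
\end{enumerate}

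\textbf{What the argument proves, and why the last step fails.} With these fixes your argument proves
\[
\frac{1}{|B_R|}\int_{B_R}\prod_{i=1}^n|F_{\tilde{\tau_i}}|^2\lesssim\prod_{i=1}^n\Big(\frac{1}{|B_R|}\int_{\R^n}|F_{\tilde{\tau_i}}|^2|\phi_{B_R}|^2\Big),
\]
and you should stop there. Your last sentence, replacing the weighted averages by sharp averages over $B_R$ via local constancy, cannot be justified. Local constancy controls values on $B_R$ by weighted averages over a neighbourhood of $B_R$, not by the sharp average over $B_R$.

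In fact the version with $B_R$ on both sides is false with a uniform constant. Here is a counterexample:
\begin{itemize}
\item Centre $B_R$ at $0$.
\item Let $\psi$ have $\widehat\psi$ supported in a small ball, with $\frac12\le|\psi|\le 1$ on $B(0,1)$.
\item Let $F=\sum_i F_{\tilde{\tau_i}}$ with $F_{\tilde{\tau_i}}(x)=e(r(t_i)\cdot x)(x_1/R)^m\psi(x/R)$, where $t_i$ lies in the middle of the parameter interval of $\tilde{\tau_i}$. Multiplying by $x_1^m$ does not enlarge the Fourier support.
\item On $B_R$, $|F_{\tilde{\tau_i}}|^2$ is comparable to $(x_1/R)^{2m}$ with constants independent of $m$.
\item The average of $(x_1/R)^{2mn}$ over $B_R$ exceeds the $n$-th power of the average of $(x_1/R)^{2m}$ by a factor that tends to infinity with $m$.
\end{itemize}
So the theorem has to be read with a weight on the right-hand side, as is customary. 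That weighted form is exactly what your argument gives, and it is the form in which such multilinear estimates are normally stated and applied.
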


We will make use of Theorem \ref{thm:MultRestrictionMomentCurve} for $\R^3$ and $\R^4$ in Section \ref{sec:multRestriction}.

\begin{theorem}\label{thm:BDGmomentcurve}
    \cite[Theorem 1.2]{bourgain2016proofmainconjecturevinogradovs} For any $F:\R^n\to \C$ with Fourier Transform supported in $\M^n(R) = \bigcup_{\gamma} \gamma$ and any ball $B_R$ of radius $R$, we have $$\int_{B_R}|F|^{n(n+1)} \lessapprox \left(\sum_{\gamma}\|F_\gamma\|_{L^{n(n+1)}(B_R)}^2\right)^{\frac{n(n+1)}{2}}.$$
\end{theorem}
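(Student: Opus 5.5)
This is the Bourgain--Demeter--Guth theorem, and I would prove it by induction on scales combined with multilinear Kakeya. Let $D_n(R)$ denote the best constant in the $\ell^2 L^{n(n+1)}$ decoupling inequality for $\M^n(R)$ at scale $R$. The claim is $D_n(R)\lessapprox 1$, and the proof would proceed by induction on the dimension $n$. The base case $n=1$ is trivial, and $n=2$ is the $L^6$ decoupling for the parabola.

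The first step is a broad--narrow reduction to an $n$-linear estimate. For fixed $K\gg1$, partition the curve into $K^{-1}$-arcs. At each point, either $n$ transverse arcs $\tilde\tau_1,\dots,\tilde\tau_n$, separated by $\gtrsim K^{-1}$, contribute comparably (the broad case), or $F$ is dominated by contributions concentrated near a lower-dimensional configuration (the narrow case). The narrow case would be handled by the induction hypothesis in dimension $n-1$, applied after projection, together with rescaling: an arc of length $K^{-1}$ of the moment curve is affinely equivalent to $\M^n(R K^{-n})$. This yields the recursive bound
\[
D_n(R)\le C_\epsilon K^{\epsilon} D_n(R/K^n)+ K^{C}\,\mathrm{MulDec}_n(R),
\]
where $\mathrm{MulDec}_n$ is the $n$-linear decoupling constant with $\prod_{i}|F_{\tilde\tau_i}|^{1/n}$ on the left.

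The second step is to bound $\mathrm{MulDec}_n$, which is the heart of the argument. I would introduce mixed norms
\[
A_p(q,B^r)=\Big(\mathrm{avg}_{B^r\subset B_R}\prod_{i=1}^n\big(\textstyle\sum_{\theta\subset\tilde\tau_i,\,|\theta|=q^{-1}}\|F_\theta\|_{L^p_{\#}(B^r)}^2\big)^{\frac{1}{2n}\cdot p}\Big)^{1/p}
\]
at a sequence of scales, and derive two inequalities between them.
\begin{itemize}
\item \emph{Ball inflation.} On larger balls, $L^2$-type quantities for caps $\theta$ of length $\rho$ are controlled multilinearly. Here one uses that the wavepackets of $F_\theta$ are essentially tubes, or more generally planks, adapted to the moment curve, and applies multilinear Kakeya or the Brascamp--Lieb inequality; Theorem~\ref{thm:MultRestrictionMomentCurve} plays this role at the $L^2$ level. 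This lets one pass from scale $\rho^{k}$ to $\rho^{k+1}$ for caps of length $\rho^{-1}$ without loss, at the intermediate exponents.
\item \emph{Lower-dimensional decoupling.} On each small ball, the induction hypothesis $D_{k}(\cdot)\lessapprox1$ for $k<n$ is applied to the projections of the curve to the first $k$ coordinates. This refines caps from length $\rho^{-1}$ to length $\rho^{-(k+1)/k}$, and so on.
\end{itemize}
Interpolating with Hölder between $L^2$ and $L^{n(n+1)}$, and iterating these two steps across a geometric sequence of scales, produces
\[
\mathrm{MulDec}_n(R)\lessapprox D_n(R^{1-\delta})^{1-\eta}\,R^{O(\delta)},
\]
with some fixed gain $\eta>0$.

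The final step is bootstrapping. Combining the broad--narrow recursion with the multilinear bound, and using the a priori polynomial bound $D_n(R)\lesssim R^{C}$, suppose $D_n(R)\sim R^{\lambda}$ with $\lambda>0$. The recursion then forces $\lambda\le(1-\eta)\lambda+O(\delta)+O(\epsilon)$, which is a contradiction once the parameters are chosen suitably. Hence $\lambda=0$, giving $\int_{B_R}|F|^{n(n+1)}\lessapprox(\sum_\gamma\|F_\gamma\|^2_{L^{n(n+1)}(B_R)})^{n(n+1)/2}$.

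I expect the main obstacle to be the ball-inflation step. One has to verify that the relevant Brascamp--Lieb data, built from the tangent spaces $\mathrm{span}\{r'(t),\dots,r^{(k)}(t)\}$ at transverse points, have finite constant exactly at the exponents needed to make the iteration close. One also has to organize the bookkeeping of exponents so that the lower-dimensional decouplings for every $k<n$ line up to yield the critical exponent $n(n+1)$. My first attempt would be to check the Bennett--Carbery--Christ--Tao finiteness condition directly for these subspaces.
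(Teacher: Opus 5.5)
The paper does not prove this statement. It quotes Bourgain--Demeter--Guth and uses the result as a black box, both directly and through cylindrical decoupling in Lemma \ref{lem:L6andL12estimates}. Your outline has the right architecture, namely the BDG scheme. However, it is a roadmap rather than a proof, and one of the steps you do spell out is misdescribed.

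In the narrow case no lower-dimensional decoupling enters. Any $n$ arcs of the moment curve that are pairwise $\gtrsim K^{-1}$ apart are already transverse (Vandermonde). So at a non-broad point, $|F|$ is dominated by $O_n(1)$ arcs of length $K^{-1}$. The triangle inequality, $\ell^{p}\subset\ell^2$, and affine rescaling of each arc to $\M^n(R/K^n)$ then give your recursion directly, exactly as in the narrow term of Lemma \ref{lem:BNinduction}. The recursion you wrote is right, but "projection plus the $(n-1)$-dimensional hypothesis" is not what makes it hold.

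The genuine gap is that everything carrying the weight of the theorem is deferred.

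\emph{Ball inflation.} At level $k$ this is not Brascamp--Lieb with fixed data. The plates of the different $\rho^{-1}$-arcs inside $\tilde\tau_i$ point along $V_k(t)^\perp$ with $t$ varying over the arc. You therefore need a Kakeya-type Brascamp--Lieb bound that is uniform over data near $(V_k(t_1),\dots,V_k(t_n))$. The scaling $\sum_j \frac1k\cdot k=n$ means this step runs at $L^{pk/n}$, not $L^2$. Consequently Theorem \ref{thm:MultRestrictionMomentCurve}, an $L^2$ statement involving only tangent directions, is not a substitute.

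\emph{Finiteness condition.} You need $\sum_{j=1}^n\dim\pi_{V_k(t_j)}V\ge k\dim V$ for every subspace $V$, and this is exactly the check you postpone. BDG arrange it by working $M$-linearly with $M=M(n)$ large. They use that for proper $V$, $\dim\pi_{V_k(t)}V=\min(k,\dim V)>\frac{k\dim V}{n}$ for all but $O_n(1)$ values of $t$.

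\emph{H\"older step.} This is not an interpolation between $L^2$ and $L^{n(n+1)}$. At level $k$ the inflated $L^{pk/n}$ quantity has to be split into two pieces. One is an $L^{k(k+1)}$ piece, where the $k$-dimensional hypothesis is lossless. The other is a higher-exponent piece passed to level $k+1$. It is this bookkeeping that has to produce $n(n+1)$.

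\emph{Bootstrap.} The iteration does not hand you a fixed gain $\eta$ to play against an arbitrarily small $\delta$. The gain and the loss from the starting scale are both small and both tied to the number of iteration steps. The bootstrap closes only because the gain beats the loss by a factor that grows with that number.

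Without these computations the proposal does not establish the theorem, although it correctly locates where the proof lives.
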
 For $\R^2$, this is an $L^6$-decoupling and for $\R^3$, this is an $L^{12}$-decoupling. We will also frequently use the idea of cylindrical decoupling. Let us state this principle for $\R^4$. Let $F:\R^4\to \C$ with Fourier Transform supported in $\M^3(R)\times \R$. Let $B$ be a regular box which has lengths $R\times R\times R\times r$ for some $r\ge 1$. Then, $$\int_{B}|F|^{12}\lessapprox \left(\sum_{\gamma^*=\gamma\times \R}\|F_{\gamma^*}\|_{L^{12}(B)}^2\right)^{6}.$$ This follows by applying Theorem \ref{thm:BDGmomentcurve} on $F(\cdot, x_4)$ for each $x_4$ followed by integrating in $x_4$ and Minkowski's inequality; see \cite[Ex. 9.22]{demeter2020fourier}. The similar $L^6$ inequality holds with $\M^2(R)\times \R^2$ and regular boxes $B$ with lengths $R\times R\times r_1 \times r_2$.

Now, let us set up the cone. Let $\kappa \in [\frac{1}{2}, 1]$. Define the cone in $\R^3$ $$\Gamma = \{(u, uv, uv^2): u\in [\frac{1}{2}, 1], v\in [-1, 1]\},$$ and $\Gamma(R)$ the $\frac{1}{R}$-neighborhood of $\Gamma$. We define the small-planks $\theta_{\kappa}$ as the partition $\Gamma(R) \cap \{\ell R^{-\kappa}\le v \le (\ell+1)R^{-\kappa}\}$ for each integer $\ell \in R^\kappa[-1,1]$. We write $\theta= \theta_{1/2}$.

\begin{theorem}\label{thm:BDconedec}
    \cite[Theorem 1.2]{demeterCanonicalDecoupling2015}
For any $F:\R^3\to \C$ with Fourier Transform supported in $\Gamma(R) = \bigcup_{\theta} \theta$ and any ball $B_R$ of radius $R$, we have $$\int_{B_R}|F|^6 \lessapprox \left(\sum_{\theta}\|F_\theta\|_{L^6(B_R)}^2\right)^{3}.$$
\end{theorem}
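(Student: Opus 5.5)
This is the canonical $L^6$ decoupling for the cone in $\R^3$. I would follow the Pramanik--Seeger iteration: deduce it from the $L^6$ decoupling for the parabola (Theorem \ref{thm:BDGmomentcurve} with $n=2$), applied in cylindrical form, combined with Lorentz rescaling and induction on scales. Let $D(R)$ be the best constant in
$$\|F\|_{L^6(B_R)}\le D(R)\Big(\sum_\theta\|F_\theta\|^2_{L^6(B_R)}\Big)^{1/2},$$
where $\theta$ ranges over the planks of angular width $R^{-1/2}$. The goal is $D(R)\lessapprox 1$.

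\textbf{Step 1 (Lorentz rescaling).} Let $\sigma\in[R^{-1/2},1]$ and let $\Gamma_\sigma$ be a sector $\{|v-v_0|\le\sigma\}$ of $\Gamma(R)$. I would check that the affine map fixing the generator direction $(1,v_0,v_0^2)$ and rescaling $v-v_0$ by $\sigma^{-1}$ (a Lorentz transformation in these coordinates) sends $\Gamma_\sigma$ to essentially the full cone at resolution $(\sigma^2R)^{-1}$. It also sends the $R^{-1/2}$-planks inside $\Gamma_\sigma$ to the canonical planks of that cone. Pulling back gives
$$\|F_{\Gamma_\sigma}\|_{L^6(B_R)}\le D(\sigma^2R)\Big(\sum_{\theta\subset\Gamma_\sigma}\|F_\theta\|^2_{L^6(B_R)}\Big)^{1/2}.$$
It therefore suffices to decouple $F$ into sectors of some angle $\sigma$ with a loss $E(\sigma)$, which yields $D(R)\le E(\sigma)D(\sigma^2R)$.

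\textbf{Step 2 (approximation by a parabolic cylinder).} Fix a sector of angle $K^{-1}$. Inside it, the cone is approximated by the cylinder $\{s(1,v_0,v_0^2)+\text{parabola in the transverse plane}\}$ over the cross-section parabola. The error in the normal direction is controlled by a power of $K^{-1}$, since the generator direction changes only by $O(K^{-1})$ across the sector. Consequently, at scales up to the appropriate power of $K$, the sector lies in a thin neighborhood of a parabolic cylinder. Cylindrical $L^6$ decoupling (the $\M^2\times\R$ version stated above, applied after Fubini along the generator direction) then decouples a $K^{-1}$ sector into subsectors of angle $K^{-c}$ for some fixed $c>1$, with loss $\lessapprox 1$.

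\textbf{Step 3 (iteration and closing the induction).} Starting from $K\sim 1$, I would iterate Step 2, each time applying it to the rescaled sectors produced by Step 1. Alternatively, I would perform one step at a large fixed scale $K=R^{\delta}$. This gives
$$D(R)\le C_\delta R^{\epsilon'}D(R^{1-2\delta'}),$$
and iterating $O_\delta(1)$ times gives $D(R)\lesssim_\epsilon R^\epsilon$.

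\textbf{Main obstacle.} The delicate point is making Step 2 quantitatively correct. One must compute precisely how far a $K^{-1}$ sector of $\Gamma$ sits from the parabolic cylinder, so that parabola decoupling gains a genuine power in the angle. One must also ensure that the number of iterations, and hence the accumulated $C_\epsilon$ losses, stays bounded independently of $R$. I would first try taking $K$ a small power of $R$ and verifying the approximation error directly from the Taylor expansion of $(u,uv,uv^2)$ around $v_0$. If that gain is too weak, I would fall back on the Bourgain--Demeter bilinear route: a broad/narrow reduction, with the narrow part handled by Step 1, and the bilinear part handled by bilinear Kakeya together with the cylindrical parabola decoupling at the intermediate scale $R^{1/2}$.
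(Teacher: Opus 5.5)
The paper does not prove this statement; it cites Bourgain--Demeter. Deriving cone decoupling from $L^6$ parabola decoupling plus Lorentz rescaling is a legitimate route. However, your Step 2, which carries all the content, is false as stated.

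\textbf{Why Step 2 fails.} After the shear $(\xi_1,\xi_2,\xi_3)\mapsto(\xi_1,\xi_2-v_0\xi_1,\xi_3-2v_0\xi_2+v_0^2\xi_1)$, a sector becomes $\{(u,uv,uv^2): u\in[\frac{1}{2},1],\ |v|\le\sigma\}$. Its cross-section at $\xi_1=u$ is the parabola $\xi_3=\xi_2^2/u$, whose curvature changes by a factor $2$ across the radial range. So for $|\xi_2|\sim\sigma$ the sector contains points with $\xi_3$ anywhere in $[\xi_2^2,2\xi_2^2]$. Hence it is at distance $\sim\sigma^2$ from every parabolic cylinder with axis along the central generator. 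Tilting the axis does not help: the normals $(v^2,-2v,1)$ cannot all be orthogonal to a fixed vector to accuracy better than $\sim\sigma^2$.

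But $\sigma^2$ is exactly the height of the cross-sectional arc $|\xi_2|\lesssim\sigma$. In that neighborhood the arc is a single canonical cap, so cylindrical decoupling gives $c=1$, i.e.\ no gain at all. The fact that generators turn by only $O(K^{-1})$ is precisely what makes the sector a $1\times K^{-1}\times K^{-2}$ plank; it gives nothing beyond that. Taking $K=R^{\delta}$ in Step 3 does not help either, because the obstruction is scale invariant. By your Step 1, every sector is Lorentz-equivalent to the whole cone, and the whole cone is $\sim 1$ away from every cylinder.

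\textbf{The missing idea: radial localization.} In these coordinates $u=\xi_1$. A smooth partition of unity in $\xi_1$ into intervals of length $\rho$ is therefore a harmless Fourier multiplier that commutes with the projections $F\mapsto F_\theta$. It costs only $O(\rho^{-1})$, paid once, via Cauchy--Schwarz over the $\rho^{-1}$ slabs and $\|(F_\theta)_{\rm slab}\|_{L^6}\lesssim\|F_\theta\|_{L^6}$. On a slab $\xi_1\in[u_0,u_0+\rho]$, a $\sigma$-sector lies within $O(\rho\sigma^2)$ of $\{\xi_3=\xi_2^2/u_0\}$. Cylindrical parabola decoupling (freeze $x_1$) then splits it into subsectors of angle $\sim\rho^{1/2}\sigma$, with loss $C_\epsilon\rho^{-\epsilon}$.

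Both the shear and the rescaling $(\xi_1,\xi_2/\sigma,\xi_3/\sigma^2)$ fix $\xi_1$, so slabs are preserved. The slab-restricted constant therefore obeys
$D_\rho(R)\le C_\epsilon\rho^{-\epsilon}D_\rho(C\rho R)$.

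\textbf{How the iteration closes.} The gain per step is only a constant factor, so you must iterate $\sim\log R/\log(1/\rho)$ times, not $O(1)$ times as in your Step 3. This gives $D(R)\lesssim\rho^{-1}(C_\epsilon\rho^{-\epsilon})^{\log R/\log(1/(C\rho))}$. That bound is $\lesssim_\epsilon R^{3\epsilon}$ only because $\rho=\rho(\epsilon)$ is chosen small after $\epsilon$. The bilinear fallback you mention is only named, not carried out, so it does not fill the gap.
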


\begin{theorem}\label{thm:small-capCone}\cite[Theorem 3]{maldague2022amplitudedependentwaveenvelope}  For any $F:\R^3\to \C$ with Fourier Transform supported in $\Gamma(R) = \bigcup_{\theta_\kappa} \theta_{\kappa}$ and any ball $B_R$ of radius $R$, we have $$\int_{B_R}|F|^q \lessapprox R\sum_{\theta_\kappa}\|F_{\theta_\kappa}\|_{L^q(B_R)}^q,$$ with $q = 2+\frac{2}{\kappa}.$ 
\end{theorem}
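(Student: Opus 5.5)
The statement is Guth and Maldague's small-cap decoupling for the cone, cited from \cite{maldague2022amplitudedependentwaveenvelope}. If I were to prove it rather than cite it, I would run a high-low, amplitude-dependent argument. This is the same scheme the present paper adapts to the moment curve in $\R^4$.

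\textbf{Step 1: reduction to a level set.} Normalize so that $\max_{\theta_\kappa}\|F_{\theta_\kappa}\|_\infty\sim 1$. Pigeonhole the $\theta_\kappa$ so that $\|F_{\theta_\kappa}\|_{L^q(B_R)}^q$ are comparable, at the cost of $\log R$ factors. Then pigeonhole a level $\lambda$ so that
\[
\int_{B_R}|F|^q\lessapprox \lambda^q|U_\lambda|, \qquad U_\lambda=\{x\in B_R : |F(x)|\sim\lambda\}.
\]
The $L^2$ identity $\int_{B_R}|F|^2\approx\sum_{\theta_\kappa}\int_{B_R}|F_{\theta_\kappa}|^2$ handles small $\lambda$. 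The trivial bound $|F|\le\#\theta_\kappa\sim R^{\kappa}$ caps $\lambda$. So the target is an estimate of the form $\lambda^q|U_\lambda|\lessapprox R\sum_{\theta_\kappa}\|F_{\theta_\kappa}\|_q^q$ for each $\lambda$ in between.

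\textbf{Step 2: scales, pruning and a stopping time.} Introduce scales $R_k=R^{k\epsilon}$ and the intermediate canonical planks $\tau_k$ of angular width $R_k^{-1}$ (dimensions $\sim 1\times R_k^{-1}\times R_k^{-2}$, or thickened to $R^{-1}$ once $R_k^{2}>R$), up to width $R^{-\kappa}$.

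Prune the wave packets of each $F_{\tau_k}$ whose amplitude exceeds a threshold $\sim\lambda/(\#\tau_k)$, which is harmless on $U_\lambda$. This gives pruned pieces whose squared square functions
\[
g_k=\sum_{\tau_k}|F_{\tau_k}|^2
\]
satisfy $\|g_k\|_\infty$ bounds compatible with $\lambda$.

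Then run a stopping time. At each $x\in U_\lambda$, take the first scale $k$ at which the high part of $g_k$ dominates, i.e. $g_k\le 2|g_k^{\mathrm{high}}|$ with $g_k=g_k^{\mathrm{low}}+g_k^{\mathrm{high}}$. Below that scale the low parts propagate, so $g_k\lesssim g_{k+1}$ up to $C^{K}$ losses.

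\textbf{Step 3: the high part (main obstacle).} On the set where scale $k$ is selected, $\lambda^2\lessapprox(\#\tau_k)\,g_k$ by bilinear/broad-narrow reasoning, and $g_k\lesssim |g_k^{\mathrm{high}}|$. The key fact is that the Fourier transform of $|F_{\tau_k}|^2$ lives in $\tau_k-\tau_k$, and its high part lives in a dual annulus region. Distinct $\tau_k$ give essentially disjoint pieces there, so $L^2$ orthogonality yields
\[
\int|g_k^{\mathrm{high}}|^2\lesssim\sum_{\tau_k}\int|F_{\tau_k}|^4 .
\]
Combining this with the amplitude bound from pruning converts $|U_\lambda\cap\Omega_k|$ into $\sum_{\theta_\kappa}\|F_{\theta_\kappa}\|_q^q$ with the factor $R$. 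The exponent $q=2+2/\kappa$ is exactly where the gain from orthogonality balances the constructive-interference loss $\#\theta_\kappa\sim R^{\kappa}$.

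This is the step I expect to be hardest. It needs the bookkeeping that passes from $\tau_k$ down to the small caps $\theta_\kappa$ via a local $L^4$ estimate (a local square-function / wave-envelope estimate at each scale), and that requires an induction or a Kakeya-type count for the envelopes. I would first try to close this with Theorem \ref{thm:BDconedec} applied locally on $R_k^2$-balls, then sum over scales, absorbing $K\sim\epsilon^{-1}$ and $C^K$ losses into $R^{\epsilon}$.
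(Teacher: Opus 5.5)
The paper does not prove this theorem; it imports it from Guth--Maldague as a black box. Your outline therefore has to stand on its own, and it has a genuine gap at exactly the step you flag.

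\textbf{The disjointness claim in Step 3 is false.} You assert that the high parts of the $|F_{\tau_k}|^2$ have essentially disjoint Fourier supports, so that $\int|g_k^{\mathrm{high}}|^2\lesssim\sum_{\tau_k}\int|F_{\tau_k}|^4$. For the cone this fails because of light-like frequencies.

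Write $c(v)=(1,v,v^2)$. A canonical plank $\tau$ centred at $w$ gives $\tau-\tau$ comparable to the plank of dimensions $1\times R^{-1/2}\times R^{-1}$ along $c(w)$, $c'(w)$ and the normal $n(w)$. Since $c$ is quadratic,
$$\rho\, c(w+h)=\rho\, c(w)+\rho h\, c'(w)+\rho h^2 e_3,\qquad e_3=(0,0,1),$$
and $e_3$ has normal component $\sim 1$.

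Hence the light-like frequency $\rho\, c(w_0)$, with $R^{-1/2}\le\rho\le 1$, lies in $C(\tau-\tau)$ for every canonical $\tau$ centred within $(R\rho)^{-1/2}$ of $w_0$. That is about $\rho^{-1/2}$ planks, up to $R^{1/4}$ of them, all in the high range. In the small-cap range $R_k>R^{1/2}$, where your planks are $1\times R_k^{-1}\times R^{-1}$, the same computation gives about $R_k^{3/2}R^{-1/2}$ overlapping planks near $|\xi|\sim R_k^{-1}$.

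So C\'ordoba-type $L^2$ orthogonality loses a power of $R$. This light-cone overlap is precisely what made the sharp $L^4$ square function estimate for the cone hard.

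\textbf{The missing ingredient.} What is needed is the Guth--Wang--Zhang wave envelope estimate, in the amplitude-dependent form that Guth--Maldague prove and from which they derive this theorem. On the annulus $|\xi|\sim\rho$ one only gets orthogonality after grouping the planks into larger planks $\tau_s$ of angular width $s\approx(R\rho)^{-1/2}$. The resulting terms are square-function averages over the wave envelopes $U_{\tau_s,R}$, not $\sum_{\tau_k}\|F_{\tau_k}\|_4^4$, and controlling them requires an induction on scales. Your phrase ``a Kakeya-type count for the envelopes'' names this but does not supply it.

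Even granting your disjointness, $\sum_{\tau_k}\|F_{\tau_k}\|_4^4$ gives no gain at coarse scales: for $k=0$ it is $\|F\|_4^4$ itself.

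\textbf{The fallback does not close the gap.} Canonical decoupling is weaker than the local square function estimate you need, since Minkowski gives $\|(\sum_\theta|F_\theta|^2)^{1/2}\|_4\le(\sum_\theta\|F_\theta\|_4^2)^{1/2}$. Concretely, canonical $\ell^2$ decoupling in $L^q$ (Theorem~\ref{thm:BDconedec} is its endpoint), followed by H\"older over the $R^{\kappa-1/2}$ flat small caps inside each canonical plank, gives the constant $R^{\kappa+\frac{3}{2}-\frac{1}{2\kappa}}$. This exceeds $R$ by $R^{(2\kappa-1)(\kappa+1)/(2\kappa)}$, a power loss for every $\kappa>\frac{1}{2}$.
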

These two decouplings for the cone will be used in Section \ref{sec: 8+4=12} to produce our desired gain for $|\Lambda_k|.$

\section{Wavepacket Pruning}\label{sec:Pruning}
We first prune wavepackets to create the functions $F^{(k)}=\sum_{\gamma_k}F_{\gamma_k}^{(k)}$, identically as in \cite[Lemma 1]{Guth_2024}. We record the key properties here. The main point is $F^{(k)}$ has a similar Fourier support to $F$ with a much better $L^{\infty}$ size, suitable for transferring to smaller exponents where lower dimensional decoupling may be used.  

\begin{lemma}\label{lem:PruningProperties}
For each $\gamma_k$,
    \begin{enumerate}
        \item[a)] 
        $$|F^{(k)}_{\gamma_k}(x)|\lesssim |F^{(k+1)}_{\gamma_k}(x)|\le \sum_{\gamma_{k+1}\subset \gamma_k}|F_{\gamma_{k+1}}^{(k+1)}(x)|,$$

        \item[b)]  $$\|F_{\gamma_k}^{(k)}\|_\infty \lesssim \min \left\{\frac{R^{\beta}}{\lambda}, \frac{R^\beta}{R_k}\right\},$$

        \item[c)] $$\text{supp }\wh{F_{\gamma_k}^{(k+1)}}\subset \text{supp }\wh{F_{\gamma_k}^{(k)}}\subset C \gamma_k.$$
    \end{enumerate}
\end{lemma}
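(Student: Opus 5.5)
The plan is to build the pruned functions by downward induction on $k$, starting from $F^{(K)}=F$ at the finest scale $R_K\sim R^\beta$, where $\gamma_K=\gamma$ are the small caps. At each step $k+1\to k$ we remove the wavepackets of large amplitude, following \cite[Lemma 1]{Guth_2024}. Write $F^{(k+1)}_{\gamma_k}=\sum_{\gamma_{k+1}\subset\gamma_k}F^{(k+1)}_{\gamma_{k+1}}$. Its Fourier support lies in $C\gamma_k$ by the inductive hypothesis (c) at level $k+1$, since the caps $\gamma_{k+1}\subset\gamma_k$ have $C\gamma_{k+1}\subset C'\gamma_k$.

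Take the dual box $\gamma_k^*$ and a tiling $\{T_{\gamma_k}\}$ of $\R^4$ by translates of $\gamma_k^*$. Fix a smooth partition of unity $\sum_T\psi_T=1$ adapted to this tiling, with $\wh{\psi_T}$ supported in a fixed dilate of $\gamma_k-\gamma_k$ centered at the origin. Then $\psi_TF^{(k+1)}_{\gamma_k}$ has Fourier support in $C\gamma_k$, after enlarging $C$ slightly (an $O(1)$ dilation is harmless, as in \cite{Guth_2024}). Call $T$ \emph{important} if
\[
\|\psi_T F^{(k+1)}_{\gamma_k}\|_{L^\infty}\le A\min\Big\{\tfrac{R^\beta}{\lambda},\tfrac{R^\beta}{R_k}\Big\}.
\]
Then define
\[
F^{(k)}_{\gamma_k}=\sum_{T\ \text{important}}\psi_T F^{(k+1)}_{\gamma_k},\qquad F^{(k)}=\sum_{\gamma_k}F^{(k)}_{\gamma_k}.
\]

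The three properties then follow from this construction. For (a): since $0\le\psi_T$ and $\sum_T\psi_T=1$, we get $|F^{(k)}_{\gamma_k}|\le\sum_{T}\psi_T|F^{(k+1)}_{\gamma_k}|=|F^{(k+1)}_{\gamma_k}|$. The second inequality in (a) is just the triangle inequality over $\gamma_{k+1}\subset\gamma_k$. For (b): the $\psi_T$ have rapidly decaying tails, so at each point only $O(1)$ tiles contribute essentially, with Schwartz-tail control from the rest. Hence $\|F^{(k)}_{\gamma_k}\|_\infty\lesssim A\min\{R^\beta/\lambda,R^\beta/R_k\}$, which is (b) with the constant $A$ absorbed; $A$ is fixed later depending on $\epsilon$. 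For (c): each $\psi_TF^{(k+1)}_{\gamma_k}$ has Fourier support in $\text{supp}\,\wh{F^{(k+1)}_{\gamma_k}}+\text{supp}\,\wh{\psi_T}\subset C\gamma_k$. So the nested supports hold with the convention that the constant $C$ is fixed once for all $k$.

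The main obstacle is keeping this constant uniform over $K\sim 1/\epsilon$ steps. Naively each step dilates the Fourier support, which would cost $C^K$. I would avoid this as Guth--Maldague do. At each step we sum over $\gamma_{k+1}\subset\gamma_k$, and $\gamma_k$ is much larger than $\gamma_{k+1}$ because $R_{k+1}/R_k=R^\epsilon$. So $C\gamma_{k+1}$ for all $\gamma_{k+1}\subset\gamma_k$, plus the support of $\wh{\psi_T}$, already fits inside $C\gamma_k$ for a single absolute $C$. This uses the anisotropic cap geometry in each of the regimes (i)--(iii), which I would verify regime by regime, including the transitions at $R^{1/4}$ and $R^{1/3}$. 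Losses in the $L^\infty$ constants do not compound, since (b) is imposed afresh at each scale by the pruning threshold.
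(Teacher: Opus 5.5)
Your construction is the same iterative Guth--Maldague pruning the paper uses, and it does give (a)--(c). Your nonnegative partition of unity with compactly supported $\wh{\psi_T}$ even yields the first inequality in (a) with constant $1$, which is cleaner than the paper's ``either $0$ or comparable'' tiling remark. Two points need correcting, though neither is fatal for the lemma as stated.

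First, the ``main obstacle'' you identify is not one, and your resolution of it is false in part of the range. In regimes (ii) and (iii), $\gamma_k$ and $\gamma_{k+1}$ have the \emph{same} thickness $R^{-1}$ in the last one or two directions. There $C\gamma_{k+1}+\text{supp}\,\wh{\psi_T}$ has thickness about $(C+2c)R^{-1}$, so it does not lie in $C\gamma_k$. The ratio $R_{k+1}/R_k=R^{\epsilon}$ only buys room in directions where the cap dimensions actually shrink. No absolute constant is needed, however. The dilation factor grows at most additively over $K\sim\epsilon^{-1}$ steps (even $C^K$ would be fine), and an $O_\epsilon(1)$ enlargement of the Fourier supports is harmless wherever the lemma is used. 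The paper's own $\lesssim$ in (b) already hides the factor $A^{K-k+1}$.

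Second, the threshold. A single $A$ in $A\min\{R^\beta/\lambda,R^\beta/R_k\}$ suffices for (a)--(c), but the construction itself is reused later. The paper prunes at $A^{K-k+1}R^\beta/\lambda$ so that, in Lemma~\ref{lem:PassToPruned}, a bad packet at level $m$ contributes $\lesssim A^{-(K-m+1)}\frac{\lambda}{R^\beta}g_m\lesssim A^{-1}\lambda$ on $\Lambda_k$. This uses the stopping-time bound $g_m\lesssim A^{K-m}R^\beta$. With a $k$-independent pruning constant $P$ you get $P^{-1}A^{K-m}\lambda$ instead, so you would need $P\gtrsim A^{K}$. Also, the $R^\beta/R_k$ part of your threshold is redundant. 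Iterating your (a) gives $|F^{(k+1)}_{\gamma_k}|\le\sum_{\gamma\subset\gamma_k}|F_\gamma|\lesssim R^\beta/R_k$, which is exactly how the paper obtains the second bound in (b). Hence for $A$ larger than that implicit constant, this part of the threshold never prunes anything.
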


\begin{proof}
    We perform an iterative scheme. Define $F_{\gamma_{K}}^{(K)}= F_\gamma$ and $F_{\gamma_{K-1}}^{(K)}= F_{\gamma_{K-1}}$. For each $1\le k \le K-1$, write the wavepacket decomposition dual to $\gamma_{k}$: $$F^{(k+1)} = \sum_{\gamma_k} \sum_{T_{\gamma_k}}\psi_{T_{\gamma_k}}F_{\gamma_k}^{(k+1)}.$$ We split the $T_{\gamma_k}$ into two collections: good wavepackets and bad wavepackets. The good ones are defined by $$\T_{\gamma_k}^g = \{T_{\gamma_k}: \|\psi_{T_{\gamma_k}}F_{\gamma_k}^{(k+1)}\|_{\infty}\le A^{K-k+1} \frac{R^{\beta}}{\lambda}\}.$$ Now define $$F_{\gamma_k}^{(k)} = \sum_{T_{\gamma_k}\in \T_{\gamma_k}^g}\psi_{T_{\gamma_k}}F_{\gamma_k}^{(k+1)}, \quad F_{\gamma_{k-1}}^{(k)}=\sum_{\gamma_k \subset \gamma_{k-1}}F_{\gamma_k}^{(k)}.$$
    Property a) follows by noting that since $\{T_{\gamma_k}\}$ are a tiling of $\R^4$, for each $x\in \R^4$, either $|F^{(k)}_{\gamma_k}(x)|=0$ or $|F^{(k)}_{\gamma_k}(x)|\sim |F^{(k+1)}_{\gamma_k}(x)|.$
    The properties c), and the first bound in b) are immediate from the construction. The latter bound in b) follows from iterating property a). 
\end{proof}

It is not immediately clear which bound in b) is the "better" $L^{\infty}$ estimate. For some estimates, $\frac{R^\beta}{R_k}$ will suffice. However, as we will see in Lemma \ref{lem:upgrader}, the critical value of $\lambda$ we consider at this scale is $R_k^{1/2}R^{\beta/2}$ and so we morally have from b), $$\|F_{\gamma_k}^{(k)}\|_\infty \lesssim \frac{R^{\beta/2}}{R_k^{1/2}}= \left(\frac{R^{\beta}}{R_k}\right)^{1/2}.$$ This is the desired square-root cancellation bound.

\begin{lemma}\label{lem:L6andL12estimates}
 For any scale $R_k\le R^{1/2}$, we have
    $$\|F_{\gamma_k}^{(k)}\|_{L^{6}(\Omega_R)}\lessapprox |\Omega_R|^{\frac{1}{6}}\left(\frac{R^{\beta}}{R_k}\right)^{1/2}.$$
 For $R_k\le R^{1/3}$, we have $$\|F_{\gamma_k}^{(k)}\|_{L^{12}(\Omega_R)}\lessapprox \min \left\{\frac{R^{\frac{3}{4}\beta}}{\lambda^{\frac{1}{2}}R_k^{\frac{1}{4}}}, \frac{R^{\frac{3}{4}\beta}}{R^{\frac{1}{12}}R_k^{\frac{1}{2}}}\right\}|\Omega_R|^{\frac{1}{12}}.$$ 
\end{lemma}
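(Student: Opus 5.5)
The plan is to interpolate between an $L^2$ bound and an $L^\infty$ bound for each pruned piece $F_{\gamma_k}^{(k)}$, with one extra decoupling step for the $L^{12}$ estimate. Two ingredients do the work.

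\emph{$L^2$ bound.} By Lemma \ref{lem:PruningProperties} a), iterated down from scale $K$, and by local $L^2$ orthogonality, we have
\[
\|F_{\gamma_k}^{(k)}\|_{L^2(w_{\Omega_R})}^2\lesssim \sum_{\gamma\subset\gamma_k}\|F_\gamma\|_{L^2(w_{\Omega_R})}^2 .
\]
Orthogonality applies because the $\gamma$ are essentially disjoint in frequency, with separation $R^{-\beta}$ in $\xi_1$, and $\Omega_R$ has side at least $R^{\alpha}$ in $x_4$ and $R\ge R^{\beta}$ in $x_1$. To make this rigorous, the pruning (which multiplies by the $\psi_{T}$) must not destroy the $L^2$ bound. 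I would handle this by proving inductively that $\|F^{(j)}_{\gamma_j}\|_2^2\lesssim\sum_{\gamma_{j+1}\subset\gamma_j}\|F^{(j+1)}_{\gamma_{j+1}}\|_2^2$, using that $\sum_T\psi_T\le1$ and that the pieces $F^{(j+1)}_{\gamma_{j+1}}$ retain Fourier support in $C\gamma_{j+1}$ by part c). Each $\gamma_k$ contains about $R^{\beta}/R_k$ caps $\gamma$, and $\|F_\gamma\|_\infty\le1$. This gives
\[
\|F_{\gamma_k}^{(k)}\|_{L^2(\Omega_R)}^2\lesssim \frac{R^\beta}{R_k}|\Omega_R| .
\]

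\emph{$L^6$ estimate.} Interpolate the $L^2$ bound with $\|F_{\gamma_k}^{(k)}\|_\infty\lesssim R^\beta/R_k$ from Lemma \ref{lem:PruningProperties} b):
\[
\int|F^{(k)}_{\gamma_k}|^6\le \|F^{(k)}_{\gamma_k}\|_\infty^4\|F^{(k)}_{\gamma_k}\|_2^2\lesssim \Big(\frac{R^\beta}{R_k}\Big)^{5}|\Omega_R| .
\]
This is too weak. We instead need $(R^\beta/R_k)^3$, so decoupling must be inserted. The cap $\gamma_k$ is a parabola-like piece in the $(\xi_1,\xi_2)$ variables: of length $R_k^{-1}$, it rescales to $\M^2(R_k^{-2}R^{2\beta})$, and the $\gamma\subset\gamma_k$ are its canonical caps of length $R^{-\beta}$. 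Apply cylindrical $L^6$ decoupling ($\M^2\times\R^2$) on regular boxes, combined with the Bourgain–Demeter–Guth count of Theorem \ref{thm:BDGmomentcurve} for $n=2$. Decoupling $F_{\gamma_k}^{(k)}$ into pieces $\sum_{\gamma\subset\gamma_k}$ is not literally valid, because pruning changes the Fourier support only up to $C\gamma_k$. I would therefore decouple at the level of $F^{(k+1)}$ pieces iteratively, or more simply use a) pointwise together with Fourier support c) and decouple $F^{(k)}_{\gamma_k}$ into its own projections onto the $\gamma$'s. Each such projection has $L^\infty\lesssim1$ (up to pruning losses $A^{K}$, which are $\lessapprox1$) and $L^2$ mass $\lesssim|\Omega_R|$. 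This gives
\[
\|F^{(k)}_{\gamma_k}\|_6^2\lessapprox \frac{R^\beta}{R_k}\,|\Omega_R|^{1/3},
\]
which is the claim. The decoupling scale $R_k^{-2}R^{2\beta}\le R$ requires boxes of side $\le R$ in the first two variables. This holds, and the $x_3,x_4$ directions are handled cylindrically. Here the hypothesis $R_k\le R^{1/2}$ guarantees that $\gamma_k$ has length $\ge R^{-1/2}$, so the parabolic decoupling is nontrivial.

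\emph{$L^{12}$ estimate.} For $R_k\le R^{1/3}$, $\gamma_k$ has curvature in three directions at scales $R_k^{-1}\times R_k^{-2}\times R_k^{-3}$, with the third thickness above $R^{-1}$. Use cylindrical $L^{12}$ decoupling for $\M^3\times\R$ after rescaling, decoupling $\gamma_k$ into subcaps of length $R^{-1/3}$ (the finest allowed by the $R^{-1}$ thickness), of which there are $R^{1/3}/R_k$. On each subcap $\sigma$, do not decouple further; instead interpolate the $L^2$ bound $\lesssim R^{\beta-1/3}|\Omega_R|$ with the $L^\infty$ bound. This gives
\[
\|F^{(k)}_{\gamma_k}\|_{12}^{2}\lessapprox \frac{R^{1/3}}{R_k}\,\|F_\sigma\|_\infty^{5/3}\big(R^{\beta-1/3}|\Omega_R|\big)^{1/6}.
\]
Using $\|F_\sigma\|_\infty\lesssim \min\{R^\beta/\lambda,\ R^{\beta-1/3}\}$ (triangle inequality over the $\gamma\subset\sigma$ for the second) and simplifying exponents should reproduce both terms of the minimum. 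For the first, $\|F_\sigma\|_\infty$ is inherited from b) applied at the appropriate scale via a).

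The main obstacle is justifying the decoupling of the pruned functions. Their Fourier supports are only contained in $C\gamma_k$, and the subcap pieces $F^{(k)}_\sigma$ must inherit both the $L^\infty$ bound from pruning and the $L^2$ bound. I would verify this by writing $F^{(k)}_\sigma$ as the Fourier projection of $F^{(k)}_{\gamma_k}$ and controlling its $L^\infty$ via the locally-constant property and a).
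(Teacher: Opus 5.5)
Your $L^6$ bound is correct if you use the iterative route, but the $L^{12}$ argument has a genuine gap. Interpolating $L^\infty$ against $L^2$ on each subcap $\sigma$ throws away the parabolic decoupling gain inside $\sigma$, and the exponents do not reproduce the statement.

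\textbf{The $L^{12}$ gap.} With $\|F_\sigma\|_\infty\lesssim R^{\beta-1/3}$ and $\|F_\sigma\|_2^2\lesssim R^{\beta-1/3}|\Omega_R|$ you get $\|F_\sigma\|_{12}^{12}\lesssim R^{11(\beta-1/3)}|\Omega_R|$. The statement needs $R^{9(\beta-1/3)}|\Omega_R|$. After summing the $R^{1/3}/R_k$ subcaps, your second term exceeds $R^{3\beta/4-1/12}R_k^{-1/2}$ by $R^{(3\beta-1)/18}$.

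With $\|F_\sigma\|_\infty\lesssim R^\beta/\lambda$ you get $R^{5/36+11\beta/12}\lambda^{-5/6}R_k^{-1/2}$. At $\lambda\sim R^{\beta/2}$ this exceeds $R^{3\beta/4}\lambda^{-1/2}R_k^{-1/4}$ by $R^{5/36}R_k^{-1/4}\ge R^{1/18}$. So for $\beta>1/3$ and $\lambda\sim R^{\beta/2}$, neither of your terms reaches the claimed minimum, and ``simplifying exponents'' cannot fix this.

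The missing step is to interpolate against the $L^6$ bound from the first half of the lemma, which already encodes decoupling down to the caps $\gamma$. At $R_{k_0}\approx R^{1/3}$, use b) to get $\|F^{(k_0)}_{\gamma_{k_0}}\|_{12}^{12}\le\|F^{(k_0)}_{\gamma_{k_0}}\|_\infty^{6}\|F^{(k_0)}_{\gamma_{k_0}}\|_6^{6}\lessapprox R^{6(\beta-1/3)}R^{3(\beta-1/3)}|\Omega_R|$. Then $L^{12}$ decoupling from $\gamma_k$ down to the $\gamma_{k_0}$ gives the second term.

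For the $\lambda$-term you should not decouple at all. Decoupling to length $R^{-1/3}$ first and then using $R^\beta/\lambda$ only gives $R^{3\beta/4+1/12}\lambda^{-1/2}R_k^{-1/2}$, which is worse by $(R^{1/3}/R_k)^{1/4}$. Instead apply b) at scale $k$ directly: $\|F^{(k)}_{\gamma_k}\|_{12}^{12}\le (R^\beta/\lambda)^6\|F^{(k)}_{\gamma_k}\|_6^6\lessapprox R^{9\beta}\lambda^{-6}R_k^{-3}|\Omega_R|$.

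\textbf{The Fourier-support issue.} You must handle this with the iterative option you mention first. The ``simpler'' option, and the fix in your last paragraph, do not work. The Fourier transform of $F^{(k)}_{\gamma_k}$ is only known to lie in $C\gamma_k$, since multiplying by $\psi_{T_{\gamma_k}}$ smears it over a $\gamma_k$-sized set. That set is much thicker than $\bigcup_{\gamma\subset\gamma_k}C\gamma$: $R_k^{-2}$ versus $R^{-2\beta}$ in the parabolic direction, and $R_k^{-3}$ versus $R^{-1}$ in the third direction for the $L^{12}$ subcaps. Consequently the projections of $F^{(k)}_{\gamma_k}$ onto subcaps do not sum back to it, no decoupling inequality applies, and nothing gives those projections an $L^\infty$ bound of $O(1)$ or of pruning type.

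What works is the paper's argument. By a), $|F^{(j)}_{\gamma_j}|\lesssim|F^{(j+1)}_{\gamma_j}|=|\sum_{\gamma_{j+1}\subset\gamma_j}F^{(j+1)}_{\gamma_{j+1}}|$. By c), the right-hand side has Fourier support in $\bigcup C\gamma_{j+1}$, so one step of cylindrical decoupling costs only $C_\epsilon R^{\epsilon^{100}}$. Iterate $K\sim\epsilon^{-1}$ times, down to the unpruned $F_\gamma$ with $\|F_\gamma\|_\infty\le 1$ for $L^6$, or down to $\gamma_{k_0}$ for $L^{12}$. With this, your $L^6$ conclusion is exactly the paper's, with no $A^K$ bookkeeping and no $L^2$ orthogonality step needed.
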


\begin{proof}
    Note by a) of Lemma \ref{lem:PruningProperties} and $L^{6}$ cylindrical decoupling with Theorem \ref{thm:BDGmomentcurve}, we have \begin{align*}
        \|F_{\gamma_k}^{(k)}\|_{L^{6}(\Omega_R)}&\le C_\epsilon R^{\epsilon^{100}}\left(\sum_{\gamma_{k+1}\subset \gamma_k}\|F_{\gamma_{k+1}}^{(k)}\|_{L^{6}(\Omega_R)}^2\right)^{1/2}\\
        &\le C_\epsilon R^{\epsilon^{100}}\left(\sum_{\gamma_{k+1}\subset \gamma_k}\|F_{\gamma_{k+1}}^{(k+1)}\|_{L^{6}(\Omega_R)}^2\right)^{1/2}.
    \end{align*} We iterate this until $k=K$ to get $$\|F_{\gamma_k}^{(k)}\|_{L^{6}(\Omega_R)}\le C_\epsilon R^{\epsilon^{99}}\left(\sum_{\gamma\subset \gamma_k}\|F_{\gamma}\|_{L^{6}(\Omega_R)}^2\right)^{1/2}.$$ Since $\|F_\gamma\|_\infty\le 1$, this gives $$\|F_{\gamma_k}^{(k)}\|_{L^{6}(\Omega_R)}\lessapprox \left(\frac{R_k^{-1}}{R^{-\beta}}|\Omega_R|^{\frac{1}{3}}\right)^{1/2}= \left(\frac{R^\beta}{R_k}\right)^{\frac{1}{2}}|\Omega_R|^{\frac{1}{6}}.$$
    Similarly, note by a) of Lemma \ref{lem:PruningProperties} and $L^{12}$ cylindrical decoupling with Theorem \ref{thm:BDGmomentcurve}, we have $$\|F_{\gamma_k}^{(k)}\|_{L^{12}(\Omega_R)}\le C_\epsilon R^{\epsilon^{100}}\left(\sum_{\gamma_{k+1}\subset \gamma_k}\|F_{\gamma_{k+1}}^{(k+1)}\|_{L^{12}(\Omega_R)}^2\right)^{1/2}.$$ We iterate this until $k=k_0$ with $R_{k_0}\approx R^{1/3}$ to get $$\|F_{\gamma_k}^{(k)}\|_{L^{12}(\Omega_R)}\le C_\epsilon R^{\epsilon^{99}}\left(\sum_{\gamma_{k_0}\subset \gamma_{k}}\|F_{\gamma_{k_0}}^{(k_0)}\|_{L^{12}(\Omega_R)}^2\right)^{1/2}.$$ By b) in Lemma \ref{lem:PruningProperties} and the previous $L^6$ inequality with $k=k_0$, we have \begin{multline*}
\|F_{\gamma_{k_0}}^{(k_0)}\|_{L^{12}(\Omega_R)}^{12}\lesssim \min \left\{\frac{R^{6\beta}}{\lambda^6}, \frac{R^{6\beta}}{R^2}\right\}\|F_{\gamma_{k_0}}^{(k_0)}\|_{L^{6}(\Omega_R)}^{6}\\\lessapprox  \min \left\{\frac{R^{6\beta}}{\lambda^6}, \frac{R^{6\beta}}{R^2}\right\}|\Omega_R| \left(\frac{R^\beta}{R^{1/3}}\right)^{3} = \min\left\{\frac{R^{9\beta}}{\lambda^6R}, \frac{R^{9\beta}}{R^3}\right\}|\Omega_R|.
    \end{multline*}
    Therefore, we have \begin{align*}
        \|F_{\gamma_k}^{(k)}\|_{L^{12}(\Omega_R)}&\lessapprox \left(\frac{R_k^{-1}}{R^{-1/3}}\right)^{\frac{1}{2}}\min\left\{\frac{R^{\frac{3}{4}\beta}}{\lambda^{\frac{1}{2}}R^{\frac{1}{12}}}, \frac{R^{\frac{3}{4}\beta}}{R^{\frac{1}{4}}}\right\}|\Omega_R|^{\frac{1}{12}}\\
        &=\min \left\{\frac{R^{\frac{3}{4}\beta+\frac{1}{12}}}{\lambda^{\frac{1}{2}}R_k^{\frac{1}{2}}}, \frac{R^{\frac{3}{4}\beta}}{R^{\frac{1}{12}}R_k^{\frac{1}{2}}}\right\}|\Omega_R|^{\frac{1}{12}}.
    \end{align*}
We can actually do better than the first bound by going straight to $L^6$. Indeed, $$\|F_{\gamma_k}^{(k)}\|_{L^{12}(\Omega_R)}^{12}\lesssim \|F_{\gamma_k}^{(k)}\|_{\infty}^6\|F_{\gamma_k}^{(k)}\|_{L^{6}(\Omega_R)}^6 \lessapprox \frac{R^{6\beta}}{\lambda^6}\left(\frac{R^\beta}{R_k}\right)^{3}|\Omega_R|=\frac{R^{9\beta}}{\lambda^6R_k^3}|\Omega_R|. $$
\end{proof}

\section{Square Function Stopping Time}\label{sec:stopTime}
Let $$g_k(x)=\sum_{\gamma_k}|F_{\gamma_k}^{(k+1)}(x)|^2.$$ We perform a stopping time decomposition on $\Omega_R$. The decomposition has two regimes: $R_k>R^\alpha$ and $R_k\le R^{\alpha}$. In the large scale regime $R_k>R^\alpha$, we will split into boxes of sidelengths $$R_k \times R_k \times R_k \times R^\alpha $$ while for the small scale regime $R_k\le R^\alpha$, we will split into cubes of sidelength $R_k$. We will call both "boxes" of sidelength $R_k$; however, the meaning of "boxes" is as above, depending on the size of $R_k$ relative to $R^\alpha$.
Split $\Omega_R$ into "boxes" of sidelength $R_{K-1}$. We say one of these "boxes" $B$ is $(K-1)$-bad if $$\frac{1}{|B|}\int_B g_{K-1}^2 \ge A^2R^{2\beta}.$$ Let $\Lambda_{K-1}$ be the union of the $(K-1)$-bad "boxes". We form $\Lambda_k$ similarly. We split the remaining "boxes" after forming $\Lambda_{k+1}$ into smaller "boxes" of sidelength $R_k$. We say one of these "boxes" $B$ is $k$-bad if \begin{equation}\label{eq:k-badDefn}
    \frac{1}{|B|}\int_B g_{k}^2 \ge A^{2K-2k}R^{2\beta}.
\end{equation} Let $\Lambda_k$ be the union of these $k$-bad "boxes". Finally, let $\Lambda_0$ be the union of the cubes with sidelength $1$ which remain after forming $\Lambda_1$. Note by H\"{o}lder's inequality, $$g_k\le R^{\epsilon} g_{k+1}.$$

\begin{lemma}\label{lem:goodbadDicotomy} For $k\ge 1$, we have
$$\int_{\Lambda_k}|g_{k}|^2\ge A^{2K-2k}R^{2\beta}|\Lambda_k|, \quad \int_{\Lambda_k}|g_{k+1}|^2\lesssim A^{2K-2k-2}R^{2\beta}|\Lambda_{k}|.$$  Moreover, for $x\in \Lambda_k$, $$g_k(x)\approx A^{K-k}R^{\beta}$$ and for $m>k$, $$g_m(x)\lesssim A^{K-m}R^\beta.$$ For $k=0$, the second inequality still holds and for $x\in \Lambda_0$, $g_0(x)\lesssim R^{O(\epsilon)}R^\beta.$ 
\end{lemma}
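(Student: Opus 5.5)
The proof will be a bookkeeping argument built on the stopping-time construction plus one analytic input: the locally constant property of the square functions $g_m$. By Lemma \ref{lem:PruningProperties} c), each $F^{(m+1)}_{\gamma_m}$ has Fourier support in $C\gamma_m$. Hence $|F^{(m+1)}_{\gamma_m}|^2$ has Fourier support in $C\gamma_m - C\gamma_m$, and this set lies in a ball of radius $\lesssim R_m^{-1}$ about the origin, because every side of $\gamma_m$ is at most $R_m^{-1}$. So $g_m$ is Fourier supported in $B(0, CR_m^{-1})$. The standard argument (convolve with a Schwartz bump $\phi_{R_m}$ whose Fourier transform is $1$ on that ball) then gives
\[
g_m(x) \lesssim \int g_m\, w_{B_{R_m}(x)},
\]
where $w_{B_{R_m}(x)}$ is an $L^1$-normalized weight adapted to the $R_m$-ball about $x$. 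By Cauchy--Schwarz this yields $g_m(x)^2 \lesssim \int g_m^2\, w_{B_{R_m}(x)}$. In the regime $R_m > R^\alpha$ I will use that $g_m$ is also locally constant on the larger $R_m\times R_m\times R_m\times R^\alpha$ "boxes". Averaging over the box dominates averaging over the balls it contains, up to the usual weight tails.

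\textbf{First inequality.} This is immediate. $\Lambda_k$ is a disjoint union of $k$-bad boxes $B$, each satisfying \eqref{eq:k-badDefn}. Summing $\int_B g_k^2 \ge A^{2K-2k}R^{2\beta}|B|$ over them gives the claim.

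\textbf{Second inequality.} Take a box $B \subset \Lambda_k$. By construction, $B$ lies inside a box $B'$ of scale $R_{k+1}$ that was not placed in $\Lambda_{k+1}$, so $B'$ is not $(k+1)$-bad, i.e.
\[
\frac{1}{|B'|}\int_{B'} g_{k+1}^2 < A^{2K-2k-2}R^{2\beta}.
\]
Since $B'$ need not be entirely covered by $\Lambda_k$, I will not sum the $B'$ directly. Instead I will use the pointwise bound $g_{k+1}(x)^2 \lesssim \int g_{k+1}^2\, w_{B_{R_{k+1}}(x)}$ for $x \in B$, and then dominate the weighted average by the average over $B'$ (and its neighbors). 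This gives $g_{k+1}(x)^2 \lesssim A^{2K-2k-2}R^{2\beta}$ pointwise on $\Lambda_k$, and integrating over $\Lambda_k$ yields the second inequality.

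\textbf{Upper bound for $m>k$.} The same argument gives $g_m(x) \lesssim A^{K-m}R^\beta$ for $x \in \Lambda_k$. Indeed, $x$ lies in an $m$-scale box that survived every earlier stage and was $m$-good, so its $g_m^2$-average is below $A^{2K-2m}R^{2\beta}$.

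\textbf{Two-sided estimate $g_k(x) \approx A^{K-k}R^\beta$ on $\Lambda_k$.}
\begin{itemize}
\item[(i)] Upper bound: combine the noted inequality $g_k \le R^\epsilon g_{k+1}$ with the bound just proved for $m=k+1$. This gives $g_k \lesssim R^{\epsilon}A^{K-k-1}R^\beta \le R^{\epsilon} A^{K-k}R^{\beta}$.
\item[(ii)] Lower bound: use local constancy of $g_k$ at scale $R_k$. On a bad box $B$, $g_k$ is essentially comparable to its local $L^2$ average. Since that average is $\ge A^{K-k}R^\beta$ by \eqref{eq:k-badDefn}, and the two-sided version of the locally constant heuristic holds up to $R^{O(\epsilon)}$ losses, this gives $g_k(x) \gtrsim A^{K-k}R^\beta$.
\end{itemize}

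\textbf{Case $k=0$.} The same reasoning applies with $\Lambda_0$ a union of unit cubes that survived stage $1$. The bound on $g_1$ follows as above, and then $g_0 \le R^\epsilon g_1 \lesssim R^{O(\epsilon)}R^\beta$, using $A^{K} \le R^{O(\epsilon)}$ for $A$ depending only on $\epsilon$ and $K\sim 1/\epsilon$.

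\textbf{Main obstacle.} I expect the main difficulty to be making the locally constant step rigorous with weights. The weight $w_{B_{R_{m}}(x)}$ has tails reaching into neighboring boxes, and those boxes may themselves be bad. I would handle this first by taking $w$ with rapid decay and absorbing the tails at an $R^{O(\epsilon)}$ (or $\epsilon^{100}$-type) cost, which is allowed by the meaning of $\lesssim$ and $\approx$ here. The pointwise lower bound in the "$\approx$" is the most delicate part. It is fine to interpret it in this averaged/weighted sense, which is how it is used later.
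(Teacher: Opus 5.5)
Your proposal is correct and is essentially the paper's own argument. The first inequality comes from summing the bad-box condition over the boxes in $\Lambda_k$. The second inequality and the bounds for $m>k$ come from the good ancestor box at scale $R_{k+1}$ (resp.\ $R_m$) together with the locally constant property. The upper half of $g_k\approx A^{K-k}R^\beta$ comes from $g_k\le R^{\epsilon}g_{k+1}$, and the pointwise lower bound is only heuristic in the paper too and is never used later; only its averaged form, i.e.\ the first inequality, is.

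One slip in your rigor discussion: when $R_m>R^\alpha$, the $R_m\times R_m\times R_m\times R^\alpha$ boxes are thinner than $R_m$-balls in $x_4$, not larger, so the weighted $R_m$-ball average leaks outside $\Omega_R$. Also, adjacent bad boxes lie in the main part of the weight, not its tail, so tail absorption alone does not close these points. The paper invokes the locally constant property at exactly this heuristic level, so nothing is missing relative to its proof.
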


\begin{proof}
    The first inequality follows from summing over all $R_{k}$-"boxes" $B\subset \Lambda_k$ with the definition \eqref{eq:k-badDefn} of $B\subset \Lambda_k$. For the second inequality, first observe that $g_k$ is locally constant on balls of radius $R_k$; indeed, $g_k$ is Fourier supported on a ball of radius $O(R_k^{-1})$ about the origin. This follows since each $|F_{\gamma_k}^{(k+1)}|^2$ has Fourier support on $C(\gamma_k-\gamma_k)$ by Lemma \ref{lem:PruningProperties}. Now note that if $B\subset \Lambda_k$, then $B$ must lie in a $(k+1)$-good "box". Therefore, for $B\subset \Lambda_k$, $$\int_{B}|g_{k+1}|^2 \lesssim \frac{|B|}{|B^\prime|}\int_{B^\prime}|g_{k+1}|^2\le A^{2K-2k-2}R^{2\beta}|B|,$$ where $B^\prime$ is the $(k+1)$-good parent of $B$. Now, we sum over $B\subset \Lambda_k$ to finish. 
    For the pointwise estimate, we use H\"{o}lder's inequality as above and the locally constant property. Further, for $m>k$, for the $R_m$-"box" containing $x$, call it $B$, we have $$\int_B |g_m|^2\le A^{2K-2m}R^{2\beta}|B|.$$ By the locally constant property, $g_m(x)\lesssim A^{K-m}R^{\beta}$. 
\end{proof}

We relate each $\Lambda_k$ to the pruning, identical to \cite[Lemma 5]{Guth_2024}. Define $$F_{\tau_i}^{(k+1)}= \sum_{\gamma_k\subset \tau_i}F_{\gamma_k}^{(k+1)}.$$

\begin{lemma}\label{lem:PassToPruned}
    For each $\tau_i$ and each $x\in \Lambda_k$, $$|F_{\tau_i}(x) - F_{\tau_i}^{(k+1)
    }(x)|\le \frac{\lambda}{1000}.$$
\end{lemma}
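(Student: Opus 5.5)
We telescope the pruning. Since $F_{\tau_i}=F_{\tau_i}^{(K)}$ (up to the trivial identification $F^{(K)}_{\gamma_{K-1}}=F_{\gamma_{K-1}}$), we write
$$F_{\tau_i}-F_{\tau_i}^{(k+1)}=\sum_{m=k+1}^{K-1}\left(F_{\tau_i}^{(m+1)}-F_{\tau_i}^{(m)}\right),\qquad F_{\tau_i}^{(m+1)}-F_{\tau_i}^{(m)}=\sum_{\gamma_m\subset\tau_i}\sum_{T_{\gamma_m}\notin \T^g_{\gamma_m}}\psi_{T_{\gamma_m}}F_{\gamma_m}^{(m+1)}.$$
There are $K\sim \epsilon^{-1}$ terms, so it suffices to show that each bad part is at most $\lambda/(1000K)$ at every $x\in\Lambda_k$. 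This is the source of the requirement that $A$ be large depending on $\epsilon$.

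For fixed $m>k$ and $x\in\Lambda_k$, a bad wavepacket satisfies $\|\psi_{T_{\gamma_m}}F^{(m+1)}_{\gamma_m}\|_\infty> A^{K-m+1}R^\beta/\lambda$. Hence at any point $y$,
$$|\psi_{T_{\gamma_m}}F^{(m+1)}_{\gamma_m}(y)|\le \frac{\lambda}{A^{K-m+1}R^\beta}\,|\psi_{T_{\gamma_m}}F^{(m+1)}_{\gamma_m}(y)|^2\cdot\frac{\|\cdot\|_\infty}{|\cdot(y)|}.$$
To make this rigorous we use the locally constant property. On the tube $T_{\gamma_m}$, the function $|\psi_{T}F_{\gamma_m}^{(m+1)}|$ is essentially constant, with Schwartz tails away from $T$. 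Writing $|G|\le |G|^2/\|G\|_\infty$ up to tails, the bad part of $F^{(m+1)}_{\gamma_m}$ at $x$ is bounded by roughly
$$\frac{\lambda}{A^{K-m+1}R^\beta}\,|F^{(m+1)}_{\gamma_m}|^2*w_{\gamma_m}(x),$$
where $w$ is a weight adapted to the dual box. Summing over $\gamma_m\subset\tau_i$ gives
$$|F_{\tau_i}^{(m+1)}-F_{\tau_i}^{(m)}|(x)\lesssim \frac{\lambda}{A^{K-m+1}R^\beta}\,(g_m*w)(x).$$
This estimate, comparing the bad part to the square function and then to the amplitude, is the step in \cite[Lemma 5]{Guth_2024}, and I would follow it.

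Next, $g_m*w$ has Fourier support in an $O(R_m^{-1})$ ball, so it is locally constant at scale $R_m$, and $g_m*w(x)\lesssim$ the average of $g_m$ on the $R_m$-box containing $x$. Since $x\in\Lambda_k$ with $m>k$, that box was $m$-good. By Cauchy–Schwarz and \eqref{eq:k-badDefn}, the average is at most $A^{K-m}R^\beta$, which is exactly the pointwise bound $g_m(x)\lesssim A^{K-m}R^\beta$ of Lemma \ref{lem:goodbadDicotomy}. Therefore each difference is $\lesssim \lambda A^{K-m}R^\beta/(A^{K-m+1}R^\beta)=C\lambda/A$. Summing the $K$ terms gives $CK\lambda/A\le\lambda/1000$ once $A\ge 1000CK\sim \epsilon^{-1}$.

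The main obstacle is the second step: controlling the weight tails uniformly and justifying that $g_m*w$, rather than $g_m$ itself, obeys the good-box bound. This needs the goodness of the neighboring $R_m$-boxes as well. Those boxes are also either good at scale $m$, or lie in $\Lambda_{m'}$ for some $m'\ge m$; in the latter case one uses the rapidly decaying weight together with the crude bound $g_m\lesssim R^{O(1)}$ to absorb them. If needed, I would enlarge $A$ slightly or use the decay of $w$ to handle boxes far from $x$.
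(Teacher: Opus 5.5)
Your proposal matches the paper's proof: you telescope over $m=k+1,\dots,K-1$, bound each bad-wavepacket contribution by $\frac{\lambda}{A^{K-m+1}R^\beta}|F^{(m+1)}_{\gamma_m}|^2$ via local constancy on $T_{\gamma_m}$, sum over $\gamma_m\subset\tau_i$ to reach $g_m$, apply the pointwise bound $g_m(x)\lesssim A^{K-m}R^\beta$ from Lemma \ref{lem:goodbadDicotomy}, and take $A$ large (your explicit requirement $A\gtrsim K\sim\epsilon^{-1}$ is what the paper's ``sufficiently large'' amounts to). Your detour through $g_m*w$ and neighboring boxes is unnecessary at the paper's level of rigor: the paper keeps the bound at the point $x$ itself (using $\sum_T\psi_T\lesssim 1$) and then cites Lemma \ref{lem:goodbadDicotomy} directly, whereas your suggested fix via weight decay and the crude bound $g_m\lesssim R^{O(1)}$ would not control immediately adjacent bad boxes, where the weight is of size $\sim 1$.
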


\begin{proof}
    Fix $\tau_i$. Write $$F_{\tau_i} = F^{(k+1)}_{\tau_i} + \sum_{m=k+1}^{K-1}(F_{\tau_i}^{(m+1)}-F_{\tau_i}^{(m)}).$$ Further for each term, we have $$F_{\tau_i}^{(m+1)}-F_{\tau_i}^{(m)}= \sum_{\gamma_m\subset \tau_i}F_{\gamma_m}^{(m+1)}-F_{\gamma_m}^{(m)}.$$ We bound each term of this sum. Let $\T^b_{\gamma_m}$ be the set of bad wavepackets as in the proof of Lemma \ref{lem:PruningProperties}.  Note that since $|F_{\gamma_m}^{(m+1)}|$ is locally constant on each $T_{\gamma_m}$, we have \begin{align*}
        |F_{\gamma_m}^{(m+1)}-F_{\gamma_m}^{(m)}| &= \sum_{T_{\gamma_m}\in \T^b_{\gamma_m}}\psi_{T_{\gamma_m}}|F_{\gamma_m}^{(m+1)}|\\
        &\lesssim \sum_{T_{\gamma_m}\in \T^b_{\gamma_m}}A^{-(K-m+1)}\frac{\lambda}{R^{\beta}}\psi_{T_{\gamma_m}}|F_{\gamma_m}^{(m+1)}|^2\\
        &\lesssim A^{-(K-m+1)}\frac{\lambda}{R^{\beta}}|F_{\gamma_m}^{(m+1)}|^2.
    \end{align*} Therefore, by the triangle inequality, $$|F_{\tau_i}^{(m+1)}-F_{\tau_i}^{(m)}|\lesssim A^{-(K-m+1)}\frac{\lambda}{R^{\beta}}g_m.$$ So, for $x\in \Lambda_k$, by Lemma \ref{lem:goodbadDicotomy}, we have $g_m(x)\lesssim A^{K-m}R^{\beta}$. Now, we may choose $A$ sufficiently large such that we get the desired inequality.
\end{proof}

We also have the following bound on $\lambda$, which is analogous to \cite[Lemma 4.1]{maldague2024smallcapdecouplingmoment}. This is the step where the $4$-broad formulation is needed; passing to the pruned $F_{\tau_i}^{(k+1)}$ is not available for the $4$-linear version.

\begin{lemma}\label{lem:upgrader}
    If $U_{\lambda}\cap \Lambda_k$ is nonempty, then $$\lambda\lesssim R^{O(\epsilon)}R_k^{1/2}R^{\beta/2}.$$
\end{lemma}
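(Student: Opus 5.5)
Fix $x\in U_\lambda\cap\Lambda_k$. By definition of $U_\lambda$, $|F_{\tau_i}(x)|\ge\lambda$ for every $i$. Lemma \ref{lem:PassToPruned} then gives $|F^{(k+1)}_{\tau_i}(x)|\ge \lambda-\lambda/1000\ge \lambda/2$ for each $i$; in particular this holds for $i=1$. This is the step where the $4$-broad formulation is used: we only need a single $\tau_i$, but we need the pointwise comparison at the same point $x$.

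Next we expand $F^{(k+1)}_{\tau_1}(x)=\sum_{\gamma_k\subset\tau_1}F^{(k+1)}_{\gamma_k}(x)$ and apply Cauchy--Schwarz, splitting the factors as $|F^{(k+1)}_{\gamma_k}|=|F^{(k+1)}_{\gamma_k}|^{1/2}\cdot|F^{(k+1)}_{\gamma_k}|^{1/2}$:
\[
\frac{\lambda}{2}\le \sum_{\gamma_k}|F^{(k+1)}_{\gamma_k}(x)| \le \Big(\sum_{\gamma_k}|F^{(k+1)}_{\gamma_k}(x)|^2\Big)^{1/2}\cdot \#\{\gamma_k\}^{1/2}\le g_k(x)^{1/2}R_k^{1/2},
\]
since there are $\sim R_k$ caps $\gamma_k$. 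Lemma \ref{lem:goodbadDicotomy} gives $g_k(x)\lessapprox A^{K-k}R^\beta$ on $\Lambda_k$, which yields
\[
\lambda\lesssim A^{K}R^{O(\epsilon)}R_k^{1/2}R^{\beta/2}.
\]
Since $A$ depends only on $\epsilon$ and $K\sim 1/\epsilon$, the factor $A^K$ is an $\epsilon$-dependent constant and can be absorbed. Equivalently, one may choose $A=R^{\epsilon^2}$-type so that $A^K\le R^{O(\epsilon)}$. Either way this gives $\lambda\lesssim R^{O(\epsilon)}R_k^{1/2}R^{\beta/2}$.

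The main thing I expect to check is the pointwise control of $g_k(x)$ on $\Lambda_k$: the upper bound $g_k\lessapprox A^{K-k}R^\beta$ there. It comes from two facts. First, $g_k\le R^\epsilon g_{k+1}$ by Hölder. Second, $g_{k+1}\lesssim A^{K-k-1}R^\beta$ holds on $\Lambda_k$ because $\Lambda_k$ lies inside $(k+1)$-good boxes, with $g_{k+1}$ locally constant at scale $R_{k+1}\ge R_k$. If the locally constant step only gives the bound on average, I would instead take a weighted average over the $R_k$-box around $x$. Since $F^{(k+1)}_{\tau_1}$ is itself locally constant at scale $R_k$, this only costs constants.

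One subtlety for $k=0$: there $g_0(x)\lesssim R^{O(\epsilon)}R^\beta$ directly by Lemma \ref{lem:goodbadDicotomy}. The same Cauchy--Schwarz step then works with $R_0=1$, after noting that Lemma \ref{lem:PassToPruned} applies to $\Lambda_0$ as well.
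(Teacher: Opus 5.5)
Your proof is correct and follows the paper's argument: pass to the pruned $F^{(k+1)}_{\tau_i}$ via Lemma \ref{lem:PassToPruned}, apply Cauchy--Schwarz pointwise over the $\sim R_k$ caps $\gamma_k$, and invoke the pointwise bound $g_k(x)\lessapprox A^{K-k}R^{\beta}$ on $\Lambda_k$ from Lemma \ref{lem:goodbadDicotomy}; your explicit absorption of $A^{K-k}$ into the $\epsilon$-dependent constant is a detail the paper leaves implicit. Two harmless slips in your commentary: the Cauchy--Schwarz you display pairs $1$ with $|F^{(k+1)}_{\gamma_k}|$, not $|F^{(k+1)}_{\gamma_k}|^{1/2}$ with itself; and $F^{(k+1)}_{\tau_1}$ is not locally constant at scale $R_k$ (its Fourier support has diameter $\sim 1$), but the fallback that relies on this is unnecessary, since only $g_k$ and $g_{k+1}$ need local constancy.
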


\begin{proof}
    Say, $x\in U_{\lambda}\cap B$ where $B\subset \Lambda_k$ is a "box" with sidelength $R_k$. By Lemma \ref{lem:PassToPruned}, we may pass to $F_{\tau_i}^{(k+1)}= \sum_{\gamma_k\subset \tau_i}F_{\gamma_k}^{(k+1)}$: $$\lambda\le \min_{1\le i\le 4}|F_{\tau_i}(x)|\le \frac{\lambda}{1000}+\min_{1\le i\le 4}|F_{\tau_i}^{(k+1)}(x)|.$$   Then, using Lemma \ref{lem:goodbadDicotomy}, $$\lambda\lesssim \min_{1\le i\le 4}|F_{\tau_i}^{(k+1)}(x)|\le\sum_{\gamma_k}|F_{\gamma_k}^{(k+1)}(x)|\le R_k^{1/2}g_k^{1/2}(x)\lesssim R^{O(\epsilon)}R_k^{1/2}R^{\beta/2}.$$
\end{proof}

\section{Multilinear Restriction Estimates}\label{sec:multRestriction}
Recall from \eqref{eq:4-linDyadicPig}, it suffices to show $$\lambda^p |U_{\lambda}|= \sum_{k}\lambda^p |U_{\lambda}\cap \Lambda_k|\lessapprox R^{\frac{p}{2}\beta}|\Omega_R|.$$ We will eventually get a uniform estimate for $\lambda^{p}|U_{\lambda}\cap \Lambda_k|$, for each $k$. In this section, we will get an $L^8$ estimate on $\Lambda_k$.

\begin{lemma}
    \begin{equation}\label{eq:restrictionConclusion}
    \lambda^8|U_{\lambda}\cap \Lambda_k| \lesssim R^{O(\epsilon)}\max\left\{1, \frac{R_k}{R^\alpha}\right\}R^{4\beta}|\Lambda_k|.
\end{equation}
\end{lemma}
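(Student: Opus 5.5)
We work one "box" at a time. Fix an $R_k$-"box" $B\subset\Lambda_k$ (an $R_k$-cube if $R_k\le R^\alpha$, an $R_k\times R_k\times R_k\times R^\alpha$ box otherwise). Since $\lambda\le\min_i|F_{\tau_i}|\le 2\lambda$ on $U_\lambda$, Lemma \ref{lem:PassToPruned} gives $|F^{(k+1)}_{\tau_i}(x)|\gtrsim\lambda$ for every $x\in U_\lambda\cap B$ and every $i$. Hence
$$\lambda^8|U_\lambda\cap B|\lesssim\int_{B}\prod_{i=1}^4|F^{(k+1)}_{\tau_i}|^2,$$
and the task is to bound the right side by $R^{O(\epsilon)}\max\{1,R_k/R^\alpha\}R^{4\beta}|B|$. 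Summing over $B\subset\Lambda_k$ then gives \eqref{eq:restrictionConclusion}.

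The main tool is Theorem \ref{thm:MultRestrictionMomentCurve} in $\R^4$ at scale $R_k^4$, after localizing further. Each $F^{(k+1)}_{\gamma_k}$ has Fourier support in $C\gamma_k$ by Lemma \ref{lem:PruningProperties}(c).

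\emph{Regime $R_k\le R^{1/4}$.} Here $\gamma_k$ is a canonical $R_k^{-4}$-cap of the moment curve, so $F^{(k+1)}_{\tau_i}$ is Fourier supported in $\M^4(R_k^4)$. We cover $B$ by balls $Q$ of radius $\sim R_k$. On such a ball, locally constant heuristics (Fourier support in an $R_k^{-1}$-neighborhood at that scale) let us apply multilinear restriction at the coarsest scale. This bounds $\frac{1}{|Q|}\int_Q\prod_i|F^{(k+1)}_{\tau_i}|^2$ by $\prod_i\frac{1}{|Q|}\int_{w_Q}|F^{(k+1)}_{\tau_i}|^2$. Local $L^2$ orthogonality of the $\gamma_k$ pieces on $R_k$-balls bounds each factor by the average of $g_k$ over $w_Q$. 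By Lemma \ref{lem:goodbadDicotomy}, $g_k\lesssim R^{O(\epsilon)}R^\beta$ on $\Lambda_k$ (the $A$-powers are $R^{O(\epsilon)}$ once $A$ is chosen depending on $\epsilon$). The product of four factors therefore gives $R^{O(\epsilon)}R^{4\beta}$.

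\emph{Regime $R_k>R^{1/4}$.} The caps $\gamma_k$ are thicker in the $\xi_4$ (and possibly $\xi_3$) direction than canonical caps. We would decompose $F^{(k+1)}_{\tau_i}$ into canonical moment-curve pieces at scale $R$, that is, into caps of size $R^{-1/4}\times\dots\times R^{-1}$. We then apply Theorem \ref{thm:MultRestrictionMomentCurve} on $R$-balls, or a cylindrical version on regular boxes. The point is that the $x_4$-extent of our box is only $\min\{R_k,R^\alpha\}$, while the multilinear estimate needs the box to be a full ball in all directions at the relevant scale.

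\emph{Regime $R_k>R^\alpha$.} The box $B$ has $x_4$-length $R^\alpha<R_k$. Two options:
\begin{enumerate}
\item[(i)] Enlarge $B$ in $x_4$ to an $R_k$-cube $\tilde B$. This costs the factor $|\tilde B|/|B|=R_k/R^\alpha$ when we use $\int_{\tilde B}g_k^2$ (made legitimate by a weight $w_B$ and the locally constant property of $g_k$). This is exactly the $\max\{1,R_k/R^\alpha\}$ loss in \eqref{eq:restrictionConclusion}.
\item[(ii)] Alternatively, apply three-dimensional multilinear restriction (Theorem \ref{thm:MultRestrictionMomentCurve} with $n=3$) in $(x_1,x_2,x_3)$ for fixed $x_4$, in $L^{3}$-form with three of the $\tau_i$. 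This reflects the paper's remark that the $\R^3$ case is also used.
\end{enumerate}
I would try (i) first. In every regime, the final ingredient is the $L^2$ bound: $\frac{1}{|B|}\int_B g_k^{2}\lesssim A^{2K-2k}R^{2\beta}\le R^{O(\epsilon)}R^{2\beta}$, since $B$ was not bad at scale $k+1$, together with the pointwise bound from Lemma \ref{lem:goodbadDicotomy}.

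The main obstacle is the mismatch between the cap shapes $\gamma_k$ and the spatial boxes on which multilinear restriction holds. We must apply Theorem \ref{thm:MultRestrictionMomentCurve} at a scale $\rho$ where the $\gamma_k$ are contained in canonical caps of $\M^4(\rho)$. We must also control $\int|F^{(k+1)}_{\tau_i}|^2$ on $\rho$-balls by $g_k$, using orthogonality among the $\gamma_k$ at spatial scale $R_k$. Finally, all of this must be done while $B$ may be thinner than $\rho$ in $x_4$. I expect this to require carefully choosing $\rho=\min\{R_k,\ldots\}$ in each of the three cap regimes and absorbing the thin-direction loss exactly as $R_k/R^\alpha$.
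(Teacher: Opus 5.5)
Your treatment of $R_k\le R^\alpha$ is exactly the paper's argument: pass to $F^{(k+1)}_{\tau_i}$ via Lemma \ref{lem:PassToPruned}, apply Theorem \ref{thm:MultRestrictionMomentCurve} on $R_k$-cubes, reduce each factor to an average of $g_k$ by local $L^2$ orthogonality of the $\gamma_k$, and use the pointwise bound of Lemma \ref{lem:goodbadDicotomy}.

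\textbf{The cap-shape split is a red herring.} The split at $R^{1/4}$, and the ``main obstacle'' you build on it, are not needed, and your plan for $R_k>R^{1/4}$ would not work. Balls of radius $R$ (or $R_k^4$) are far larger than the $R_k$-boxes making up $\Lambda_k$. They also exceed the $x_4$-width $R^\alpha$ of $\Omega_R$, and the stopping time gives no control of $g_k$ on them. In all three cap regimes, $C\gamma_k$ lies within $O(R_k^{-2})$ of the moment curve: in regime (ii) the fourth side is $R^{-1}\le R_k^{-3}$, and in regime (iii) the last two sides are $R^{-1}\le R_k^{-2}$. So $F^{(k+1)}_{\tau_i}$ is Fourier supported in $\M^4(R_k/C)$, and Theorem \ref{thm:MultRestrictionMomentCurve} applies on $R_k$-balls whatever the cap shape. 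Hence your first-regime argument covers every $R_k\le R^\alpha$. This includes the range $R^{1/4}<R_k\le R^\alpha$, which your write-up assigns to the non-working plan. The only distinction that matters is the box shape, $R_k$ versus $R^\alpha$.

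A minor point: non-badness of the parent at scale $k+1$ controls $g_{k+1}$, not $g_k$. You need $g_k\le R^{\epsilon}g_{k+1}$ to get the upper bound on $g_k$; $B$ itself is $k$-bad, which gives only a lower bound.

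\textbf{The case $R_k>R^\alpha$.} Your option (i) is correct and is a genuinely different route from the paper's. Enlarging $B$ to the concentric $R_k$-cube $\tilde B$ and running the same 4-linear argument gives $\int_B\prod_i|F^{(k+1)}_{\tau_i}|^2\lessapprox|\tilde B|R^{4\beta}=\frac{R_k}{R^\alpha}|B|R^{4\beta}$, which is exactly the stated factor.

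The paper instead stays inside $B$:
\begin{itemize}
\item It applies the 4-linear estimate on $R^\alpha$-cubes.
\item It bounds one factor by orthogonality into $R^{-\alpha}$-caps $\gamma'$, followed by Cauchy--Schwarz over the $R_k/R^\alpha$ caps $\gamma_k\subset\gamma'$. This is its source of $R_k/R^\alpha$.
\item It handles the other three factors by local constancy of $\sum_{\gamma'}|F^{(k+1)}_{\gamma'}|^2$ at scale $R^\alpha$, then the trilinear $\R^3$ estimate on $B$.
\end{itemize}

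Your route is shorter and uses a single multilinear estimate. Its price is that $\tilde B$ sticks out of $\Omega_R$ in $x_4$, where the stopping time says nothing. So the bound $g_k\lessapprox R^\beta$ on $\tilde B$ rests entirely on local constancy of $g_k$ at scale $R_k$ (its Fourier support lies in a ball of radius $O(R_k^{-1})$ about the origin). This is the same heuristic the paper uses for the pointwise bounds in Lemma \ref{lem:goodbadDicotomy}. The paper's route uses only information on $B$, and it exhibits the locally 4-dimensional, globally 3-dimensional structure exploited later.

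Your option (ii) alone would not suffice. It gives only \eqref{eq:L^6restriction}, and upgrading with Lemma \ref{lem:upgrader} ($\lambda^2\lessapprox R_kR^\beta$) yields $R_kR^{4\beta}|\Lambda_k|$, which is off by a factor $R^\alpha$.
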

\noindent By Lemma \ref{lem:PassToPruned}, we can pass to $F_{\tau_i}^{(k+1)}= \sum_{\gamma_k\subset \tau_i}F_{\gamma_k}^{(k+1)}$, just as in the proof of Lemma \ref{lem:upgrader}. Moreover, we can bound by the geometric mean $$\min_{1\le i\le 4}|F_{\tau_i}^{(k+1)}|\le \prod_{i=1}^4|F_{\tau_i}^{(k+1)}|^{1/4}.$$ We split into two regimes. 

First, consider $R_k\le R^\alpha$. By the $L^8$ multilinear restriction estimate Theorem \ref{thm:MultRestrictionMomentCurve}, $$\lambda^8 |U_{\lambda}\cap \Lambda_k| \lesssim \sum_{B=B_{R_k}\subset \Lambda_k}\int_{B} \prod_{i=1}^4 |F_{\tau_i}^{(k+1)}|^2 \lesssim \sum_{B\subset \Lambda_k}\frac{1}{|B|^3}\prod_{i=1}^4 \left(\int_B |F_{\tau_i}^{(k+1)}|^2\right).$$ By $L^2$ orthogonality and H\"{o}lder's, $$\int_B |F_{\tau_i}^{(k+1)}|^2\lesssim \int_B \sum_{\gamma_k}|F_{\gamma_k}^{(k+1)}|^2= \int_B g_k\lesssim |B|^{1/2}\left(\int_{B}|g_k|^2\right)^{1/2}\lesssim |B|R^{\beta+O(\epsilon)}.$$ Therefore, we have \begin{equation*}\label{eq:L^8cube}
    \lambda^8|U_{\lambda}\cap \Lambda_k|\lesssim R^{4\beta+O(\epsilon)}|\Lambda_k|.
\end{equation*}

Now, consider $R^\alpha<R_k\le R^\beta$. If we ignore $x_4$ and only use the $L^6$ trilinear restriction estimate Theorem \ref{thm:MultRestrictionMomentCurve}, we get by the same argument as in the previous case that \begin{equation}\label{eq:L^6restriction}
    \lambda^6|U_{\lambda}\cap \Lambda_k| \lesssim R^{3\beta+O(\epsilon)}|\Lambda_k|.
\end{equation} However, we can do better; we can apply $L^8$ $4$-linear estimate Theorem \ref{thm:MultRestrictionMomentCurve} at the scale $R^\alpha$ and using the locally constant property, we can then further use the $L^6$ trilinear estimate Theorem \ref{thm:MultRestrictionMomentCurve}. This is similar to the argument in \cite[Section 4.1]{maldague2024smallcapdecouplingmoment}. This gives $$\lambda^8 |U_\lambda\cap \Lambda_k| \lesssim \sum_{B\subset \Lambda_k}\sum_{B_{R^\alpha}\subset B}\int_{B_{R^\alpha}} \prod_{i=1}^4 |F_{\tau_i}^{(k+1)}|^2 \lesssim \sum_{B_{R^\alpha}\subset \Lambda_k}\frac{1}{|B_{R^\alpha}|^3}\prod_{i=1}^4 \left(\int_{B_{R^\alpha}} |F_{\tau_i}^{(k+1)}|^2\right).$$ By $L^2$ orthogonality and H\"{o}lder's and the locally constant property of $g_k$, \begin{multline*}
    \int_{B_{R^\alpha}} |F_{\tau_i}^{(k+1)}|^2\lesssim \int_{B_{R^\alpha}} \sum_{|\gamma^\prime|=R^{-\alpha}, \gamma^\prime \subset \tau_i}|F_{\gamma^\prime}^{(k+1)}|^2\lesssim R_kR^{-\alpha}\int_{B_{R^\alpha}} \sum_{\gamma_k}|F_{\gamma_k}^{(k+1)}|^2\\= R_kR^{-\alpha}\int_{B_{R^\alpha}} g_k\lesssim R_kR^{-\alpha}|B_{R^\alpha}|^{1/2}\left(\int_{B_{R^\alpha}}|g_k|^2\right)^{1/2}\lesssim R_kR^{-\alpha}|B_{R^\alpha}|R^{\beta+O(\epsilon)}.
\end{multline*}
Therefore, we have $$\lambda^8|U_{\lambda}\cap \Lambda_k|\lesssim R_kR^{\beta - \alpha+O(\epsilon)}\sum_{B_{R^\alpha}\subset \Lambda_k}\frac{1}{|B_{R^\alpha}|^2}\prod_{i=1}^3 \left(\int_{B_{R^\alpha}}\sum_{|\gamma^\prime|=R^{-\alpha}, \gamma^\prime \subset \tau_i}|F_{\gamma^\prime}^{(k+1)}|^2\right).$$ Similarly to the local constancy of $g_k$, we have that $$\sum_{|\gamma^\prime|=R^{-\alpha}}|F_{\gamma^\prime}^{(k+1)}|^2$$ are roughly constant on cubes $B_{R^\alpha}$. Hence, $$\lambda^8|U_{\lambda}\cap \Lambda_k|\lesssim R_kR^{\beta - \alpha+O(\epsilon)}\sum_{B\subset \Lambda_k} \int_{B}\prod_{i=1}^3\left(\sum_{|\gamma^\prime|=R^{-\alpha}, \gamma^\prime \subset \tau_i}|F_{\gamma^\prime}^{(k+1)}|^2\right).$$ By the $L^6$ trilinear restriction estimate Theorem \ref{thm:MultRestrictionMomentCurve}, we have $$\int_{B}\prod_{i=1}^3\left(\sum_{|\gamma^\prime|=R^{-\alpha}, \gamma^\prime \subset \tau_i}|F_{\gamma^\prime}^{(k+1)}|^2\right)\lesssim \frac{1}{|B|^2}\prod_{i=1}^3 \left(\int_B \sum_{|\gamma^\prime|=R^{-\alpha}}|F_{\gamma^\prime}^{(k+1)}|^2\right).$$ By the same orthogonality argument as above, we have $$\lambda^8|U_{\lambda}\cap \Lambda_k| \lesssim R_kR^{4\beta-\alpha+O(\epsilon)}|\Lambda_k|.$$ In summary, \begin{equation*}
    \lambda^8|U_{\lambda}\cap \Lambda_k| \lesssim R^{O(\epsilon)}|\Lambda_k|\begin{cases}
        R^{4\beta}, & R_k\le R^\alpha\\
        R_kR^{4\beta-\alpha},& R_k> R^\alpha
    \end{cases}
    =R^{O(\epsilon)}\max\left\{1, \frac{R_k}{R^\alpha}\right\}R^{4\beta}|\Lambda_k|.
\end{equation*}
For clarity moving forward, we will omit $R^{O(\epsilon)}$ as it is harmless for our argument.

\section{Adding exponents: from $L^8$ to $L^p$}\label{sec: 8+4=12}

The $L^{p}$ estimate $$\lambda^p|U_\lambda\cap \Lambda_k|\lessapprox R^{\frac{p}{2}\beta}|\Omega_R|$$ follows by combining \eqref{eq:restrictionConclusion} with an improved estimate for $|\Lambda_k|$.
We can first quickly dispense with $k=0$. Indeed, by Lemma \ref{lem:upgrader} and \eqref{eq:restrictionConclusion}, we use the trivial estimate $|\Lambda_0|\le |\Omega_R|$ and we are done. Now, we focus on $k\ge 1$.
To get such an estimate, we decompose $g_k$ into a high and low piece. Define the adjacency relation $\gamma_{k}\sim \gamma_{k}^\prime$ if $\gamma_k, \gamma_k^\prime$ are the same or adjacent caps. Let $$g_k^h = \sum_{\gamma_k}\sum_{\gamma_{k+1}\not\sim \gamma_{k+1}^\prime\subset \gamma_k}F_{\gamma_{k+1}}^{(k+1)}\ol{F_{\gamma_{k+1}^\prime}^{(k+1)}},$$ and $$g_k^l = \sum_{\gamma_k}\sum_{\gamma_{k+1}\sim \gamma_{k+1}^\prime\subset \gamma_k}F_{\gamma_{k+1}}^{(k+1)}\ol{F_{\gamma_{k+1}^\prime}^{(k+1)}}.$$
By Lemma \ref{lem:goodbadDicotomy}, we have $$A^{2K-2k}R^{2\beta}|\Lambda_k|\lesssim \int_{\Lambda_k}|g_k|^2\lesssim \int_{\Lambda_k}|g_k^l|^2+\int_{\Lambda_k}|g_k^h|^2.$$  We will show $g_k^l$ is comparable to $g_{k+1}$.
    Note that $$g_k^l(x) \le  g_{k+1}(x)+ \left|\sum_{\gamma_k}\sum_{\substack{\gamma_{k+1}\sim \gamma_{k+1}^\prime \subset \gamma_k\\ \gamma_{k+1}, \gamma_{k+1}^\prime adjacent}}F^{(k+1)}_{\gamma_{k+1}}(x)\ol{F^{(k+1)}_{\gamma_{k+1}^\prime}(x)}\right|.$$ Thus, by Cauchy-Schwarz, $$|g_k^l(x)|\le g_{k+1}(x)+\sum_{\gamma_k}\sum_{\substack{\gamma_{k+1}, \gamma_{k+1}^\prime \subset \gamma_k\\ \gamma_{k+1}, \gamma_{k+1}^\prime adjacent}}\frac{|F^{(k+1)}_{\gamma_{k+1}}(x)|^2+|F^{(k+1)}_{\gamma_{k+1}^\prime}(x)|^2}{2}\lesssim g_{k+1}(x).$$
By Lemma \ref{lem:goodbadDicotomy}, we have $$\int_{\Lambda_k}|g_k^{l}|^2 \lesssim \int_{\Lambda_k}|g_{k+1}|^2\lesssim A^{2K-2k-2}R^{2\beta}|\Lambda_k|.$$
Therefore, we can choose $A$ sufficiently large such that $$\int_{\Lambda_k}|g_k^{l}|^2 \le \frac{1}{2}A^{2K-2k}R^{2\beta}|\Lambda_k|.$$ Hence, the high piece dominates so that $$R^{2\beta}|\Lambda_k|\lesssim \int_{\Lambda_k}|g_k^h|^2\le |\Lambda_k|^{1-\frac{2}{q}}\left(\int_{\Omega_R}|g_k^h|^q\right)^{2/q}$$ which gives for $q\ge 2$, $$R^{\beta q}|\Lambda_k|\lesssim \int_{\Omega_R}|g_k^h|^q.$$ From this point further, we ignore $x_4$ and only use $3$-dimensional considerations; namely, the decoupling of the cone into planks Theorems \ref{thm:BDconedec} and \ref{thm:small-capCone}. We use these decouplings of the cone to get the following improved orthogonality for $g_k^h$. The high piece has frequency supported on differences of separated $\gamma_{k+1}$. After rescaling, these difference sets are roughly cone planks in the first three frequency variables. We refer to \cite[Lemma 7, Lemma 8]{Guth_2024}.

\begin{lemma}\label{lem:highOrthog}
For $4\le \frac{2}{\beta}\le q\le 6$, and $R_k=R^{2/q}\in [R^{1/3}, R^{\beta}]$, 
    $$\int_{\Omega_R}|g_k^h|^{q} \lessapprox R_k^{-1}R \sum_{\gamma_{k+1}}\int_{\Omega_R} |F_{\gamma_{k+1}}^{(k+1)}|^{2q}.$$  For $R_k\le R^{1/3}$, $$\int_{\Omega_R}|g_k^h|^6\lessapprox \left(\sum_{\gamma_{k+1}}\|F_{\gamma_{k+1}}^{(k+1)}\|_{L^{12}(\Omega_R)}^4\right)^3.$$
\end{lemma}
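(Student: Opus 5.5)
The plan is to follow \cite[Lemma 7, Lemma 8]{Guth_2024}. First we ignore $x_4$ and rescale within each $\gamma_k$ so that the high-piece frequencies become cone planks. We then apply cone decoupling on $R$-balls in the first three variables, integrate over $x_4$ (cylindrically), and finally handle the $\gamma_k$-sum by orthogonality or the triangle inequality.

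\textbf{Step 1: rescaling the high piece.} Write $g_k^h=\sum_{\gamma_k}h_{\gamma_k}$ with
$$h_{\gamma_k}=\sum_{\gamma_{k+1}\not\sim\gamma_{k+1}'\subset\gamma_k}F^{(k+1)}_{\gamma_{k+1}}\ol{F^{(k+1)}_{\gamma_{k+1}'}}.$$
The Fourier support of each term lies in $\gamma_{k+1}-\gamma_{k+1}'$.

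For $\xi=r(t)$ and $\xi'=r(s)$ on the (projected) curve $(t,t^2,t^3)$, we have $r(t)-r(s)\approx (t-s)\,(1,2t_0,3t_0^2)+(t-s)(\text{second-order terms})$. Apply the affine map $A_{\gamma_k}$ that sends $\gamma_k$ to a unit-scale cap; it rescales the first three coordinates by $R_k,R_k^2,R_k^3$ after a Taylor shear. Under this map, the difference sets $\gamma_{k+1}-\gamma_{k+1}'$ with $|t-s|\sim R_k^{-1}$ separation and $\gamma_{k+1}$ of length $R_{k+1}^{-1}$ land in planks of a $\rho^{-1}$-neighborhood of the cone $\Gamma$. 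Here $\rho$ is determined by the third dimension of $\gamma_{k+1}$, i.e. $R^{-1}$ against $R_k^{-3}$ in regime iii), and by $R_{k+1}^{-3}$ in regimes i) and ii). The $v$-angle of each plank is indexed by the pair direction, and the fourth coordinate is irrelevant.

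More precisely, the pair $(\gamma_{k+1},\gamma_{k+1}')$ is determined, up to $O(1)$ choices, by the direction $t+s$ and separation $t-s$. Because the pairs are nonadjacent, the separation is $\gtrsim R_{k+1}^{-1}$, but it can be as large as $R_k^{-1}$. So we dyadically pigeonhole the separation $\sigma$, losing $\log R$. For fixed $\sigma$, each product's Fourier support after rescaling sits in a single plank of a cone (dilated by $\sigma$), with angular width $\sim R_{k+1}^{-1}/R_k^{-1}$.

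\textbf{Step 2: the case $R_k\le R^{1/3}$.} For $R_k\le R^{1/3}$ the planks are canonical. We apply Theorem \ref{thm:BDconedec} in $L^6$ (cylindrically in $x_4$, on $\Omega_R$, after undoing the rescaling). This gives
$$\|h_{\gamma_k}\|_6\lessapprox\Big(\sum_{\theta}\|h_\theta\|_6^2\Big)^{1/2}.$$
Each $h_\theta$ is essentially a single product $F_{\gamma_{k+1}}\ol{F_{\gamma_{k+1}'}}$, so by Cauchy--Schwarz $\|h_\theta\|_6^2\le\|F_{\gamma_{k+1}}\|_{12}^2\|F_{\gamma_{k+1}'}\|_{12}^2\le \frac12(\|F_{\gamma_{k+1}}\|_{12}^4+\|F_{\gamma_{k+1}'}\|_{12}^4)$. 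Summing over pairs then gives $\sum\|F_{\gamma_{k+1}}\|_{12}^4$, with $O(1)$ multiplicity per fixed $\sigma$ dyadic.

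The sum over $\gamma_k$ must also be decoupled. The $h_{\gamma_k}$ have Fourier supports in disjoint translates/angular sectors of the big cone $\bigcup_{\gamma_k}(\gamma_k-\gamma_k)$, which is itself a coarser plank family. Hence the same cone decoupling at the coarser scale (or an iteration combining both) yields the stated $(\sum_{\gamma_{k+1}}\|\cdot\|_{12}^4)^3$ bound.

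\textbf{Step 3: the case $R_k=R^{2/q}\ge R^{1/3}$.} We use Theorem \ref{thm:small-capCone} instead, with $\kappa$ chosen so that $q=2+2/\kappa$ matches the plank angle $R_{k+1}^{-1}R_k$ relative to the neighborhood thickness. The choice $R_k=R^{2/q}$ is exactly what makes this consistent. That theorem gives
$$\int|g_k^h|^q\lessapprox \rho\sum_\theta\int|h_\theta|^q.$$
Then $|h_\theta|^q\le |F_{\gamma_{k+1}}|^{q}|F_{\gamma_{k+1}'}|^q\le\frac12(|F_{\gamma_{k+1}}|^{2q}+|F_{\gamma_{k+1}'}|^{2q})$, and the prefactor must be computed to be $R/R_k$.

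\textbf{Main obstacle.} The main difficulty is the bookkeeping of the rescaling: identifying the effective cone parameter $\rho$ and $\kappa$ across regimes, and verifying that the constant works out to $R_k^{-1}R$. The varying separations $\sigma$ must be handled by pigeonholing, as in \cite{Guth_2024}, while respecting the $R^{-1}$ thickness that replaces $R_k^{-3}$ in regime iii).
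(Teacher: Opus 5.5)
\textbf{Verdict: genuine gap.} Your ingredients are the right ones: treat $x_4$ cylindrically, use canonical cone decoupling for $R_k\le R^{1/3}$ and Guth--Maldague small-cap cone decoupling above that, and finish with $|F_{\gamma_{k+1}}\ol{F_{\gamma_{k+1}'}}|^q\le\frac12(|F_{\gamma_{k+1}}|^{2q}+|F_{\gamma_{k+1}'}|^{2q})$. But the decoupling you actually set up is the vacuous one, and the one that carries the content of the lemma is only asserted.

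\emph{Why the within-$\gamma_k$ decoupling is vacuous.} Inside a fixed $\gamma_k$ there are only $R_{k+1}/R_k=R^{\epsilon}$ caps $\gamma_{k+1}$. So $h_{\gamma_k}$ is a sum of $R^{2\epsilon}$ products, and the triangle inequality with Cauchy--Schwarz already controls $\|h_{\gamma_k}\|_{L^q}$ by the $\gamma_{k+1}$ pieces at $R^{O(\epsilon)}$ cost.

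\emph{Why your normalization is wrong.} Your per-$\gamma_k$ rescaling $A_{\gamma_k}$ fails in three ways.
\begin{enumerate}
\item After it, the difference set is $\{(u,uv,\frac34uv^2+\frac14u^3)\}$ with $u$ up to $\sim 1$. The cubic term is not negligible, so this is not a cone.
\item For $R_k\ge R^{1/3}$ the rescaled third side is $R^{-1}R_k^3\ge 1$, so there is no thin neighborhood at all.
\item Since $A_{\gamma_k}$ changes with $\gamma_k$, it can never be used to decouple $\sum_{\gamma_k}h_{\gamma_k}$.
\end{enumerate}

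\emph{What is missing.} The cross-$\gamma_k$ decoupling is the whole lemma. In Step 2 your one sentence about the ``coarser plank family'' points in the right direction but is not carried out: you do not say which scale, which thickness, or why the planks are canonical. In Step 3 you match $\kappa$ to the angle $R_{k+1}^{-1}R_k=R^{-\epsilon}$, which cannot produce $R_k=R^{2/q}$. The prefactor $R/R_k$ is left as something that ``must be computed''. That bookkeeping is not a side issue; it is the proof.

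\emph{The fix, as in the paper.} Use one global dilation, the same for every $\gamma_k$.
\begin{enumerate}
\item Write $u=t-s\in[R_{k+1}^{-1},R_k^{-1}]$ and $v=t+s$, so that $r(t)-r(s)=(u,uv,\frac34uv^2+\frac14u^3)$ in the first three coordinates. Dilate frequencies by $R_k$, i.e.\ set $y=x/R_k$.
\item Then $h_{\gamma_k}$ has frequencies $(u',u'v,\frac34u'v^2+\frac{u'^3}{4R_k^2})$ with $u'\in[R^{-\epsilon},1]$ and $v\in 2J_k$. The cubic term is $O(R_k^{-2})$.
\item Hence every $h_{\gamma_k}$ lies in one $1\times R_k^{-1}\times R_k^{-2}$ plank of a \emph{fixed} cone, in a neighborhood of thickness $\max\{R_k^{-2},R_k/R\}$. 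Distinct $\gamma_k$ give finitely overlapping planks.
\item For $R_k\le R^{1/3}$ these are canonical planks at scale $R_k^2$. Apply Theorem~\ref{thm:BDconedec} on $R_k^2$-balls, undo the dilation and use Minkowski to get $(\sum_{\gamma_k}\|h_{\gamma_k}\|_{L^6}^2)^3$.
\item For $R_k\ge R^{1/3}$ the cone scale is $R/R_k$. The planks fit inside the small-cap planks $\theta_\kappa$ with $(R/R_k)^{-\kappa}=R_k^{-1}$, which is precisely $R_k=R^{\kappa/(\kappa+1)}=R^{2/q}$. Theorem~\ref{thm:small-capCone} on $(R/R_k)$-balls then supplies exactly the prefactor $R/R_k$.
\item Only after this do you pass from $h_{\gamma_k}$ to the $\gamma_{k+1}$ by the triangle inequality, with $R^{O(\epsilon)}$ loss.
\end{enumerate}
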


\begin{proof}
    We deviate from the presentation in \cite[Section 2.3-2.4]{Guth_2024} for clarity. In particular, their arguments are completely geometric. We reformulate in terms of the extension operator, and simplify the argument down to a simple sequence of variable changes. Fix $x_4$ and only consider $\R^3$ as the first three variables; integrating over $x_4$ and Minkowski's inequality at the end will give the inequalities over $\Omega_R$.
    
    Let $E$ be the extension operator associated with the moment curve in $\R^3$, i.e. for any interval $J\subset [\frac{1}{2}, 1]$ and $f_J:J\to \C$, $$Ef_J(x)= \int_{[\frac{1}{2}, 1]}f_J(t) e(x\cdot (t, t^2, t^3))\;dt.$$ Let $J_k$ be the interval of length $R_k^{-1}$ associated with the cap $\gamma_k$. Consider the analog of the high piece $$G(x)=\sum_{J_k}\sum_{J_{k+1}\not\sim J_{k+1}^\prime\subset J_k} Ef_{J_{k+1}}(x)\ol{Ef_{J_{k+1}^\prime}}(x).$$ After an affine rescaling, we will see that for each $J_k$, $$G_{J_k}(x)=\sum_{J_{k+1}\not\sim J_{k+1}^\prime\subset J_k} Ef_{J_{k+1}}(x)\ol{Ef_{J_{k+1}^\prime}}(x)$$ is Fourier supported on a thin neighborhood of a cone plank. Indeed, by two changes of variables, \begin{align*}
        G_{J_k}(x) & = \sum_{J_{k+1}\not\sim J_{k+1}^\prime\subset J_k} \int_{J_{k+1}\times J_{k+1}^\prime}f(t)\ol{f(s)}e(x\cdot (t-s, t^2-s^2, t^3-s^3))\; dtds\\
        &= \frac{1}{2}\sum_{\substack{J_{k+1}\not\sim J_{k+1}^\prime\\\subset J_k}} \int_{[R_{k+1}^{-1}, R_k^{-1}]} \int_{J_{k+1}+J_{k+1}^\prime} g(u,v)e(x\cdot(u, uv, \frac{u^3}{4}+\frac{3}{4}uv^2))\; dvdu\\
        &= \frac{1}{2R_k}\sum_{\substack{J_{k+1}\not\sim J_{k+1}^\prime\\\subset J_k}}\int_{[R^{-\epsilon}, 1]} \int_{J_{k+1}+J_{k+1}^\prime}g(R_ku^\prime, v)e(\frac{1}{R_k}x\cdot(u^\prime, u^\prime v, \frac{(u^\prime)^3}{4R_k^2}+\frac{3}{4}u^\prime v^2))\; dvdu^\prime,
    \end{align*} for some function $g$. Note $J_{k+1}+J_{k+1}^\prime\subset 2J_k$. Since $\frac{(u^\prime)^3}{4R_k^2}\lesssim \frac{1}{R_k^2}$, the surface $$\{(u^\prime, u^\prime v, \frac{(u^\prime)^3}{4R_k^2}+\frac{3}{4}u^\prime v^2)): u^\prime \in [R^{-\epsilon}, 1], v\in 2J_k\}$$ is contained in a $\sim 1\times R_k^{-1}\times R_k^{-2}$ cone plank. Since $u^\prime$ ranges over $[R^{-\epsilon}, 1]$, we technically can dyadic pigeonhole and rescale $u^\prime$; however, this only introduces $R^{O(\epsilon)}$ losses which are harmless. With the change of variables $y=\frac{1}{R_k}x$, we can apply a decoupling for the cone on a ball of $\frac{R}{R_k}$. For $R_k\le R^{1/3}$, we may use canonical decoupling on smaller balls of radius $R_k^2$. Indeed by Theorem \ref{thm:BDconedec} this gives\begin{align*}
       \int_{B_R}|G(x)|^6\; dx=R_k^3\int_{B_{R/R_k}}|\tilde{G}(y)|^6\; dy
       &=R_k^3\sum_{B_{R_k^2}\subset B_{R/R_k}}\|\tilde{G}\|_{L^6(B_{R_k^2})}^6\\
       &\lessapprox R_k^3\sum_{B_{R_k^2}\subset B_{R/R_k}} \left(\sum_{J_k}\|\tilde{G}_{J_k}\|_{L^6(B_{R_k^2})}^2\right)^{3}.
    \end{align*} Now, by undoing the variables $y=\frac{1}{R_k}x$ and Minkowski's inequality, we get $$\int_{B_R}|G(x)|^6\; dx\lessapprox \left(\sum_{J_k}\|G_{J_k}\|_{L^6(B_{R})}^2\right)^{3}\lesssim R^{O(\epsilon)} \left(\sum_{J_{k+1}}\|Ef_{J_{k+1}}\|_{L^{12}(B_R)}^4\right)^{3}.$$ By a standard $R^{-1}$-thickening and integrating over $x_4$, we get the desired inequality above. For $R_k>R^{1/3}$, we use the small-cap decoupling Theorem \ref{thm:small-capCone} on a ball of radius $\frac{R}{R_k}$ with $\kappa$ such that $\left(\frac{R}{R_k}\right)^{-\kappa}= \frac{1}{R_k}$. This gives $R_k= R^\frac{\kappa}{\kappa+1}= R^{2/q}$. Moreover, the cone planks as described above fit inside the small-cap cone planks since $\frac{1}{R_k^2} \le \left(\frac{R}{R_k}\right)^{-1}$ in this regime. By the same computation as in the previous case, we get the desired inequality.
\end{proof}

Now, we use the previous Lemma \ref{lem:highOrthog} to get the improved estimate for $\Lambda_k$.

\begin{corollary}\label{cor:Lambda_kEstimate}
    For $4\le \frac{2}{\beta}\le q\le 6$, and $R_k=R^{2/q}\in [R^{1/3}, R^{\beta}]$, 
    $$|\Lambda_k|\lessapprox\min\left\{\frac{R^{\beta(q-3)+1}}{R_k^3\lambda^{2q-6}}, \frac{R^{\beta(q-3)+1}}{R_k^{2q-3}}\right\}|\Omega_R|.$$ For $R_k\le R^{1/3}$, $$|\Lambda_k|\lessapprox \min\left\{\frac{R^{3\beta}}{\lambda^6}, \frac{R^{3\beta}}{R_k^3R}\right\}|\Omega_R|.$$
\end{corollary}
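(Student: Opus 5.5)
We combine three ingredients already established: the lower bound $R^{\beta q}|\Lambda_k|\lesssim \int_{\Omega_R}|g_k^h|^q$ from the high-dominance argument, the orthogonality of the high piece in Lemma \ref{lem:highOrthog}, and the $L^\infty$/$L^6$/$L^{12}$ bounds on the pruned pieces from Lemmas \ref{lem:PruningProperties} and \ref{lem:L6andL12estimates}. The two entries of each minimum come from the two alternatives in b) of Lemma \ref{lem:PruningProperties}. The first uses $\|F^{(k+1)}_{\gamma_{k+1}}\|_\infty\lesssim R^\beta/\lambda$; the second uses $\lesssim R^\beta/R_{k+1}\approx R^\beta/R_k$, since $R_{k+1}=R^\epsilon R_k$ and the loss is harmless.

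\textbf{Case $R_k=R^{2/q}\in[R^{1/3},R^\beta]$.} Lemma \ref{lem:highOrthog} gives
$$R^{\beta q}|\Lambda_k|\lessapprox R_k^{-1}R\sum_{\gamma_{k+1}}\int_{\Omega_R}|F^{(k+1)}_{\gamma_{k+1}}|^{2q}.$$
For each $\gamma_{k+1}$ we bound
$$\int|F_{\gamma_{k+1}}^{(k+1)}|^{2q}\le \|F_{\gamma_{k+1}}^{(k+1)}\|_\infty^{2q-6}\int|F_{\gamma_{k+1}}^{(k+1)}|^6 .$$
By the $L^6$ estimate of Lemma \ref{lem:L6andL12estimates} (valid since $R_{k+1}\le R^{1/2}$), the last integral is $\lessapprox (R^\beta/R_k)^3|\Omega_R|$. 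Summing over the $\sim R_k$ caps $\gamma_{k+1}$ cancels the factor $R_k^{-1}$, leaving
$$R^{\beta q}|\Lambda_k|\lessapprox R\,\|F\|_\infty^{2q-6}\,\frac{R^{3\beta}}{R_k^3}|\Omega_R|.$$
Inserting $\|F\|_\infty\lesssim R^\beta/\lambda$ gives $R^{\beta(q-3)+1}R_k^{-3}\lambda^{-(2q-6)}|\Omega_R|$. Inserting $\|F\|_\infty\lesssim R^\beta/R_k$ gives $R^{\beta(q-3)+1}R_k^{-(2q-3)}|\Omega_R|$. These are the two claimed bounds. Here one must check that the exponents of $R^\beta$ combine correctly: $(2q-6)\beta+3\beta-q\beta=\beta(q-3)$.

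\textbf{Case $R_k\le R^{1/3}$.} Take $q=6$. The second inequality of Lemma \ref{lem:highOrthog} gives
$$R^{6\beta}|\Lambda_k|\lessapprox\Bigl(\sum_{\gamma_{k+1}}\|F^{(k+1)}_{\gamma_{k+1}}\|_{L^{12}}^4\Bigr)^3.$$
Apply Lemma \ref{lem:L6andL12estimates} at scale $R_{k+1}\approx R_k$. With the $\lambda$-bound, each fourth power is $\lessapprox R^{3\beta}\lambda^{-2}R_k^{-1}|\Omega_R|^{1/3}$. Summing over $R_k$ caps and cubing gives $R^{9\beta}\lambda^{-6}|\Omega_R|$, hence $|\Lambda_k|\lessapprox R^{3\beta}\lambda^{-6}|\Omega_R|$. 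With the other bound, each fourth power is $R^{3\beta}R^{-1/3}R_k^{-2}|\Omega_R|^{1/3}$. The sum gives $R^{3\beta}R^{-1/3}R_k^{-1}$, and cubing gives $R^{9\beta}R^{-1}R_k^{-3}$, hence $|\Lambda_k|\lessapprox R^{3\beta}R_k^{-3}R^{-1}|\Omega_R|$.

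\textbf{Main obstacle.} The delicate point is bookkeeping in the first case. The first inequality of Lemma \ref{lem:L6andL12estimates} is stated for $F^{(k)}_{\gamma_k}$, not for $F^{(k+1)}_{\gamma_{k+1}}$, so its proof has to be rerun at index $k+1$: decouple down to the $\gamma$'s via property a). Likewise, b) must be available for $F^{(k+1)}_{\gamma_{k+1}}$, which it is by the construction in Lemma \ref{lem:PruningProperties}. We also need the passage from the integral over $\Lambda_k$ to the integral over $\Omega_R$ to be legitimate for $q\ge2$. This holds because of the Hölder step preceding Lemma \ref{lem:highOrthog}.
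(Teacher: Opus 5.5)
Your proposal is correct and follows the paper's proof step for step. For $R_k=R^{2/q}$ you combine Lemma \ref{lem:highOrthog} with $\int|F^{(k+1)}_{\gamma_{k+1}}|^{2q}\le\|F^{(k+1)}_{\gamma_{k+1}}\|_\infty^{2q-6}\int|F^{(k+1)}_{\gamma_{k+1}}|^6$, the $L^6$ bound of Lemma \ref{lem:L6andL12estimates}, and the two $L^\infty$ alternatives of Lemma \ref{lem:PruningProperties} b); for $R_k\le R^{1/3}$ you insert the $L^{12}$ bound of Lemma \ref{lem:L6andL12estimates} into the $L^6$ high-piece estimate, exactly as the paper does. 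The index shift you flag is not an obstacle: Lemma \ref{lem:L6andL12estimates} is stated for every admissible scale, so it applies directly at index $k+1$, and $R_{k+1}\approx R_k$ up to harmless $R^{\epsilon}$ losses.
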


\begin{proof}
For $R_k= R^{2/q}$ for $q\in [\frac{2}{\beta},6]$, then by Lemma \ref{lem:L6andL12estimates}\begin{align*}
    R^{\beta q}|\Lambda_k|\lesssim\int_{\Omega_R}|g_k^h|^q&\lessapprox R_k^{-1}R \sum_{\gamma_{k+1}}\int_{\Omega_R} |F_{\gamma_{k+1}}^{(k+1)}|^{2q}\\
    &\lesssim R_k^{-1}R\left(\frac{R^\beta}{\lambda}\right)^{2q-6}\sum_{\gamma_{k+1}}\int_{\Omega_R}|F_{\gamma_{k+1}}^{(k+1)}|^{6}\\
    &\lessapprox R_k^{-1}R \left(\frac{R^\beta}{\lambda}\right)^{2q-6} R_k\left(\frac{R^{\beta}}{R_k}\right)^3|\Omega_R|\\
    &= \frac{R^{\beta(2q-3)+1}}{R_k^3\lambda^{2q-6}}|\Omega_R|.
\end{align*} The second bound in this case is found by repeating the same computation but using Lemma \ref{lem:PruningProperties} $$\|F_{\gamma_{k+1}}^{(k+1)}\|_\infty \lesssim \frac{R^{\beta}}{R_k}.$$
For $R_k\le R^{1/3}$, \begin{align*}
    R^{6\beta}|\Lambda_k|\lesssim\int_{\Omega_R}|g_k^h|^6&\lessapprox \left(\sum_{\gamma_{k+1}}\|F_{\gamma_{k+1}}^{(k+1)}\|_{L^{12}(\Omega_R)}^4\right)^3\\
    &\lessapprox  \left(\sum_{\gamma_{k+1}}\min \left\{\frac{R^{3\beta}}{\lambda^2R_k}, \frac{R^{3\beta}}{R^{\frac{1}{3}}R_k^2}\right\}|\Omega_R|^{\frac{1}{3}}\right)^3\\
    &\lessapprox \min \left\{\frac{R^{9\beta}}{\lambda^6}, \frac{R^{9\beta}}{RR_k^3}\right\}|\Omega_R|
\end{align*}

\end{proof}

\section{Proof of Theorem \ref{thm:p<11}}\label{sec:proof}
Fix $p\in (10, 14]$ with $\alpha = \left(\frac{p-6}{2}\right)\beta-1$, $\max\{\frac{1}{3}, \frac{2}{p-6}\}\le \beta\le \min\{\frac{1}{2},\frac{2}{p-8}\}$. The main strategy is to combine \eqref{eq:restrictionConclusion} with Corollary \ref{cor:Lambda_kEstimate}. In some cases, we do not need the full strength of $\eqref{eq:restrictionConclusion}$, only the $L^6$ estimate \eqref{eq:L^6restriction}. Cases Ia and Ib will verify for the smallest scales $R_k\le R^{1/3}$. Case II verifies for the intermediate scales and Case III verifies for the largest scales. 

\subsection{Case Ia: $R_k\le R^{1/3}$, $p\le 12$}

By Corollary \ref{cor:Lambda_kEstimate} and \eqref{eq:L^6restriction}, we get $$\lambda^6 |U_\lambda \cap \Lambda_k|\lessapprox R^{3\beta}\frac{R^{3\beta}}{\lambda^6}|\Omega_R|$$ which gives $$\lambda^{12} |U_\lambda \cap \Lambda_k|\lessapprox R^{6\beta}|\Omega_R|.$$ By our assumption in Section \ref{sec:Outline} that $\lambda\ge R^{\beta/2}$, we get $$\lambda^{p} |U_\lambda \cap \Lambda_k|\lessapprox R^{\frac{p}{2}\beta}|\Omega_R|.$$

\subsection{Case Ib: $R_k \le R^{1/3}$, $p>12$} If $R_k\le R^\alpha$, then by Corollary \ref{cor:Lambda_kEstimate} and \eqref{eq:restrictionConclusion}, we get $$\lambda^8 |U_\lambda \cap \Lambda_k|\lessapprox R^{4\beta}\frac{R^{3\beta}}{\lambda^6}|\Omega_R|.$$ By our assumption in Section \ref{sec:Outline} that $\lambda\ge R^{\beta/2}$, we get $$\lambda^{p} |U_\lambda \cap \Lambda_k|\lessapprox R^{\frac{p}{2}\beta}|\Omega_R|.$$
Alternatively, now suppose $R_k>R^\alpha$ if any scales exist. By Corollary \ref{cor:Lambda_kEstimate} and \eqref{eq:restrictionConclusion}, we get $$\lambda^8 |U_\lambda \cap \Lambda_k|\lessapprox R^{4\beta}\frac{R^{3\beta-\alpha}}{R_k^2 R}|\Omega_R|.$$ By Lemma \ref{lem:upgrader}, this gives \begin{align*}
    \lambda^p |U_\lambda \cap \Lambda_k|&\lessapprox R^{\frac{p}{2}\beta}\frac{R^{\beta(7-\frac{p}{2})-\alpha-1}}{R_k^2}\lambda^{p-8}|\Omega_R|\\
    &\lessapprox R^{\frac{p}{2}\beta}\frac{R^{\beta(7-\frac{p}{2})-\alpha-1}}{R_k^2}(R_k^{\frac{1}{2}}R^{\frac{\beta}{2}})^{p-8}|\Omega_R|\\
    &= R^{\frac{p}{2}\beta}\frac{R_k^{\frac{p}{2}-6}}{R^{\alpha+1-3\beta}}|\Omega_R|.
\end{align*} Note $\alpha+1 = \beta\left(\frac{p}{2}-3\right)$. Since $p>12$, this gives $$\frac{R_k^{\frac{p}{2}-6}}{R^{\alpha+1-3\beta}}= \left(\frac{R_k}{R^\beta}\right)^{\frac{p}{2}-6}\le 1.$$ Hence, we get $\lambda^{p} |U_\lambda \cap \Lambda_k|\lessapprox R^{\frac{p}{2}\beta}|\Omega_R|.$

\subsection{Case II: $R^{1/3}<R_k<R^{\frac{4}{p-2}}$} Let $R_k= R^{\frac{2}{q}}$ with $\frac{p-2}{2}\le q\le 6$. We split into two subcases depending on the size of $\lambda$. First, suppose $\lambda\ge R_k$. By Corollary \ref{cor:Lambda_kEstimate} and \eqref{eq:restrictionConclusion}, we get $$\lambda^8|U_\lambda\cap \Lambda_k|\lessapprox R^{4\beta}\frac{R^{\beta(q-3)+1}}{R_k^3\lambda^{2q-6}}|\Omega_R|\max\left\{1, \frac{R_k}{R^\alpha}\right\}.$$ Since $q\ge \frac{p-2}{2}$, we get \begin{align*}
    \lambda^p|U_\lambda\cap \Lambda_k|&\lessapprox R^{\frac{p}{2}\beta}\frac{R^{\beta(q+1-\frac{p}{2})+1}}{R_k^3\lambda^{2q-p+2}}|\Omega_R|\max\left\{1, \frac{R_k}{R^\alpha}\right\}\\
    &\le R^{\frac{p}{2}\beta}\frac{R^{\beta(q+1-\frac{p}{2})+1}}{R_k^3R_k^{2q-p+2}}|\Omega_R|\max\left\{1, \frac{R_k}{R^\alpha}\right\}\\
    &=R^{\frac{p}{2}\beta}\frac{R^{\beta(q+1-p)+\frac{p}{2}\beta+1}}{R_k^{2q-p+5}}|\Omega_R|\max\left\{1, \frac{R_k}{R^\alpha}\right\}\\
    &=R^{\frac{p}{2}\beta}\frac{R^{\beta(q+1-p)+3\beta+\alpha+2}}{R_k^{2q-p+5}}|\Omega_R|\max\left\{1, \frac{R_k}{R^\alpha}\right\}\\
    &= R^{\frac{p}{2}\beta}\frac{R^{\beta(q+4-p)}}{R_k^{q-p+4}}\cdot \frac{R^2R^\alpha}{R_k^{q+1}}|\Omega_R|\max\left\{1, \frac{R_k}{R^\alpha}\right\}.
\end{align*} Note $R_k^q = R^2$ and that $$\frac{R^\alpha}{R_k}\max\left\{1, \frac{R_k}{R^\alpha}\right\}\le \max\left\{\frac{R^\beta}{R_k}, 1\right\}.$$ For $p\ge 11\ge q+5$, we have $$\left(\frac{R_k}{R^\beta}\right)^{p-q-5}\le 1.$$ These give $\lambda^p|U_\lambda\cap \Lambda_k|\lessapprox R^{\frac{p}{2}\beta}|\Omega_R|.$ Also for $p<11$, note $\alpha\le \frac{1}{4}<\frac{1}{3}$ so, $R_k>R^{\alpha}$ and our estimate becomes $$\lambda^p|U_\lambda\cap \Lambda_k|\lessapprox R^{\frac{p}{2}\beta}|\Omega_R| \left(\frac{R_k}{R^\beta}\right)^{p-q-4}\le R^{\frac{p}{2}\beta}|\Omega_R|. $$
Alternatively, suppose $\lambda\le R_k$.  By Corollary \ref{cor:Lambda_kEstimate} and \eqref{eq:restrictionConclusion}, we get $$\lambda^8|U_\lambda\cap \Lambda_k|\lessapprox R^{4\beta}\frac{R^{\beta(q-3)+1}}{R_k^{2q-3}}|\Omega_R|\max\left\{1, \frac{R_k}{R^\alpha}\right\}.$$ Therefore, $$\lambda^p|U_\lambda\cap \Lambda_k|\lessapprox R^{\frac{p}{2}\beta}\frac{R^{\beta(q+1-\frac{p}{2})+1}}{R_k^{2q-p+5}}|\Omega_R|\max\left\{1, \frac{R_k}{R^\alpha}\right\}$$ which is the same estimate as above so we proceed the same and get $\lambda^p|U_\lambda\cap \Lambda_k|\lessapprox R^{\frac{p}{2}\beta}|\Omega_R|.$

\subsection{Case III: $R_k> R^{\frac{4}{p-2}}$} Let $R_k= R^{\frac{2}{q}}$ with $\frac{2}{\beta}\le q< \frac{p-2}{2}$ if any such scales exist. As in Case II, we have $$\lambda^p|U_\lambda\cap \Lambda_k|\lessapprox R^{\frac{p}{2}\beta}\frac{R^{\beta(q+1-\frac{p}{2})+1}}{R_k^3}\lambda^{p-2-2q}|\Omega_R|\max\left\{1, \frac{R_k}{R^\alpha}\right\}.$$ By Lemma \ref{lem:upgrader}, $$\lambda^{p-2-2q} \lessapprox R_k^{\frac{p}{2}-1-q}R^{\beta\left(\frac{p}{2}-1-q\right)}=\frac{1}{R^2}R_k^{\frac{p}{2}-1}R^{\beta\left(\frac{p}{2}-1-q\right)}.$$ Thus, \begin{align*}
    \lambda^p|U_\lambda\cap \Lambda_k|&\lessapprox R^{\frac{p}{2}\beta}\frac{R_k^{\frac{p}{2}-1}}{R_k^3R}|\Omega_R|\max\left\{1, \frac{R_k}{R^\alpha}\right\}.
\end{align*} Note $$\frac{R_k^{\frac{p}{2}-4}}{R}\le \frac{(R^\beta)^{\frac{\alpha+1}{\beta}-1}}{R}= \frac{R^\alpha}{R^\beta},$$ and so we get $\lambda^p|U_\lambda\cap \Lambda_k|\lessapprox R^{\frac{p}{2}\beta}|\Omega_R|.$

\section{Broad-to-linear Reduction - Proof of Theorem \ref{thm:mainResult}}\label{sec:multRed}
Up to this point, we have shown Theorem \ref{thm:p<11}. Let us restate: for any $\tau_1, \ldots, \tau_4$ caps with $|\tau_i|\sim 1$ and $\sim 1$-separated, we have \begin{equation}\label{eq: MultMainResult}
    \|\min_{1\le i\le 4}|F_{\tau_i}|\|_{L^{p}(\Omega_R)}^{p}\lessapprox R^{\beta \frac{p}{2}} |\Omega_R|\max_{\gamma}\|F_\gamma\|_\infty^{p}.
\end{equation}
  To show the linear inequality for $p\ge 11$, we follow an argument similar to \cite[Section 2]{maldague2024smallcapdecouplingmoment}. Let $C(R)$ be the best constant satisfying $$\|F\|_{L^{p}(\Omega_R)}^{p}\le C(R)R^{\beta \frac{p}{2}}|\Omega_R|\max_{\gamma}\|F_\gamma\|_{\infty}^{p}.$$ By iterating the following inequality, we have the desired conclusion $C(R)\lessapprox 1$.

\begin{lemma}\label{lem:BNinduction}
    For any $\epsilon>0$, $$C(R)\lesssim_\epsilon C(R^{1-\epsilon})+R^{1000\epsilon}.$$
\end{lemma}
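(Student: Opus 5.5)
We run a standard Bourgain--Guth broad--narrow argument at the coarse frequency scale $K=R^{\epsilon}$, with the constant fixed by \eqref{eq: MultMainResult}. Partition $\M^4(R^\beta,R)$ into caps $\tau$ with $\xi_1$-length $K^{-1}$, and write $F=\sum_\tau F_\tau$.

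\textbf{Pointwise dichotomy.} For each $x\in\Omega_R$, one of two things holds. Either there are four caps $\tau_1,\dots,\tau_4$, pairwise $\gtrsim K^{-1}$-separated, with $|F_{\tau_i}(x)|\ge K^{-4}\max_\tau|F_\tau(x)|$. Otherwise the significant caps cluster into at most three groups of $O(1)$ adjacent caps. This gives
$$|F(x)|\lesssim K^{O(1)}\max_{\tau_1,\dots,\tau_4 \text{ sep.}}\min_i|F_{\tau_i}(x)| \;+\; C\max_{\tau}|F_\tau(x)|.$$
Raising to the $p$-th power and summing over the $K^{O(1)}$ choices of quadruples bounds $\|F\|_{L^p(\Omega_R)}^p$ by two terms:
\begin{itemize}
\item the broad term, $K^{O(1)}\sum_{\tau_1,\ldots,\tau_4}\|\min_i|F_{\tau_i}|\|_{L^p}^p$;
\item the narrow term, $C\sum_\tau\|F_\tau\|_{L^p(\Omega_R)}^p$.
\end{itemize}

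\textbf{Broad term.} Here the caps are only $K^{-1}$-separated rather than $1$-separated. I would run the proof of Theorem \ref{thm:p<11} with separation $K^{-1}$: the multilinear restriction constants in Theorem \ref{thm:MultRestrictionMomentCurve} and the cone decouplings depend only polynomially on the separation. Alternatively, one can note that the whole argument loses only $K^{O(1)}$. Either way the broad term is $\lesssim K^{O(1)}R^{O(\epsilon)}R^{\beta p/2}|\Omega_R|=R^{O(\epsilon)}R^{\beta p/2}|\Omega_R|$, which produces the $R^{1000\epsilon}$ term.

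\textbf{Narrow term: rescaling.} Each $F_\tau$ is handled by parabolic rescaling. The affine map $\xi_1\mapsto K(\xi_1-c_\tau)$, with the compatible maps on $\xi_2,\xi_3,\xi_4$, sends a $K^{-1}$-arc of the moment curve to a full arc. It scales the $i$th coordinate by $K^{i}$, so the neighborhood becomes:
\begin{itemize}
\item width $K^2R^{-2\beta}$ in $\xi_2$;
\item $K^3R^{-1}$ in $\xi_3$;
\item $K^4R^{-1}$ in $\xi_4$.
\end{itemize}
The caps $\gamma$ of length $R^{-\beta}$ become caps of length $KR^{-\beta}$.

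I want this to be (contained in) $\M^4(R'^{\beta'},R')$ for a smaller $R'$, with a domain $R'\times R'\times R'\times R'^{\alpha'}$ on the same critical line $p=6+2(\alpha'+1)/\beta'$. In physical space $\Omega_R$ transforms to a box with sides $R/K$, $R/K^2$, $R/K^3$, $R^\alpha/K^4$. Here is where I expect to lose, and where $p\ge 11$ is essential.
\begin{itemize}
\item The $\xi_3,\xi_4$ widths are coarser than the $\xi_2$ width, so I would set $R'\approx R/K^4$ (thickening the $\xi_3$-neighborhood).
\item I would then tile the rescaled domain by $R'\times R'\times R'\times R'^{\alpha'}$ boxes, choosing $\beta',\alpha'$ so that $R'^{\beta'}=R^\beta/K$ and the critical relation holds.
\end{itemize}
I must check that $(\alpha',\beta')$ stays in the admissible region $p\ge 11$. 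This requires $\alpha'\le \beta'$ and $\beta'\in[1/3,1/2]$ up to $O(\epsilon)$ perturbations. It also requires that the fourth side $R^\alpha/K^4$ be at least $R'^{\alpha'}$, so that the domain can be tiled without loss. Any mismatch costs a factor $K^{c(p)}$, and $c(p)\le 0$ exactly when $p\ge 11$; this is the inefficiency mentioned in the introduction.

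\textbf{Closing the induction.} Applying the definition of $C(\cdot)$ at scale $R'=R^{1-O(\epsilon)}$ gives
$$\|F_\tau\|_{L^p}^p\le C(R^{1-\epsilon})(R^\beta/K)^{p/2}|\Omega_R|.$$
Summing over the $K$ caps $\tau$ gives $K^{1-p/2}C(R^{1-\epsilon})R^{\beta p/2}|\Omega_R|$, and $K^{1-p/2}\le 1$. (Relabeling $\epsilon$ absorbs $R^{1-O(\epsilon)}$ into $R^{1-\epsilon}$.) This yields $C(R)\lesssim C(R^{1-\epsilon})+R^{1000\epsilon}$.

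The main obstacle is the rescaling bookkeeping: verifying that the rescaled neighborhood and domain fit a small-cap problem of the same type, with the same critical $p$, and with loss $K^{c}$ satisfying $c\le p/2-1$ precisely when $p\ge 11$.
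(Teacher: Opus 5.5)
\textbf{The narrow-term step has a genuine gap.} In the closing step you assert
\[
\|F_\tau\|_{L^p(\Omega_R)}^p\le C(R^{1-\epsilon})\,(R^\beta/K)^{p/2}|\Omega_R|\max_\gamma\|F_\gamma\|_\infty^p .
\]
This says each narrow piece has full square-root cancellation on $\Omega_R$, so the narrow sum gains $K^{1-p/2}$. It silently drops the rescaling loss you flagged one paragraph earlier, and the hypothesis $p\ge 11$ never enters your argument.

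The claim is also false. Take the setting of Corollary~\ref{cor:NewExpSums} with $c_n=1$, $R=N^a$, $\Omega_R=[0,N^a]^3\times[0,N^b]$, and $F_\tau=\sum_{n\in I}e(x\cdot\Phi(n/N))$ with $|I|=N/K$.
\begin{itemize}
\item By periodicity, $|F_\tau|\gtrsim N/K$ on a parallelogram of volume $\sim K^{10}$ around each of the $N^{2a-3}$ points $(jN,lN^2,0,0)\in\Omega_R$.
\item Using $a+b=\frac p2-3$, this gives $\|F_\tau\|_p^p\gtrsim N^{2a-3}K^{10}(N/K)^p=K^{10-p/2}(N/K)^{p/2}|\Omega_R|$.
\item Summing over the $K$ caps, $\sum_\tau\|F_\tau\|_p^p\gtrsim K^{11-p}N^{p/2}|\Omega_R|$.
\end{itemize}
A consistency check points the same way. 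If your narrow bound held for every $p$, combining it with Theorem~\ref{thm:p<11} would prove the linear estimate for $p<11$. That contradicts the counterexample cited in the introduction. The true narrow loss is $K^{11-p}$, and that is exactly where $p\ge 11$ is used.

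\textbf{The rescaling bookkeeping does not close either.} Passing to new parameters $(\alpha',\beta')$ means $C(R')$ is the constant of a different inequality. So $C(R)\lesssim C(R^{1-\epsilon})+\dots$ is no longer a recursion for a single function. With $R'=R/K^4$ and $R'^{\beta'}=R^\beta/K$ you get $\beta'=\frac{\beta-\epsilon}{1-4\epsilon}>\beta$. This already leaves $[\frac13,\frac12]$ when $\beta=\frac12$. Under iteration $\beta$ keeps increasing, since the map is repelling from its fixed point $\frac14$.

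Even granting all of that, keeping $p$ critical forces $R'^{\alpha'}=R^\alpha K^{4-\frac{p-6}{2}}$. The rescaled fourth side $R^\alpha/K^4$ must then be enlarged by $K^{11-p/2}$. The narrow sum comes out as $K\cdot K^{-p/2}\cdot K^{11-p/2}=K^{12-p}$, so your route only closes for $p\ge 12$.

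\textbf{How the paper does it.} It keeps $(\alpha,\beta)$ fixed and inducts at $R/K^{1/\beta}$, whose small-cap scale $(R/K^{1/\beta})^\beta=R^\beta/K$ matches the rescaled caps.
\begin{itemize}
\item It enlarges $T^{-1}(\Omega_R)\approx\frac RK\times\frac R{K^2}\times\frac R{K^3}\times\frac{R^\alpha}{K^4}$ to $\frac RK\times\frac R{K^2}\times\frac R{K^{1/\beta}}\times\frac{R^\alpha}{K^{\alpha/\beta}}$. This is possible since $2\le\frac1\beta\le3$ and $\frac\alpha\beta\le1$.
\item The enlarged box has volume $|\Omega_R|K^{-p/2}$, because $3+\frac{\alpha+1}{\beta}=\frac p2$.
\item With the Jacobian factor $K^{-10(p-1)}$ and $\max\|G_{\gamma'}\|_\infty\lesssim K^{10}\max\|F_\gamma\|_\infty$, each piece is bounded by $K^{10-p}C(R^{1-\epsilon})R^{\beta p/2}|\Omega_R|\max_\gamma\|F_\gamma\|_\infty^p$.
\item Summing over the $K$ caps gives $K^{11-p}$.
\end{itemize}
The paper does not try to match the rescaled $\xi_3,\xi_4$-widths $K^3/R$, $K^4/R$ to $K^{1/\beta}/R$. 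Your insistence on literal containment in $\M^4(R'^{\beta'},R')$ forces $R'\le R/K^4$, and that is exactly what costs you the extra factor of $K$.

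Your pointwise broad/narrow dichotomy and your treatment of the broad term with $K^{O(1)}$ losses are fine.
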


\begin{proof}
    Fix $\epsilon>0$ and let $K\sim R^{\epsilon\beta}$. By a standard broad-narrow argument, we have $$\|F\|_{L^{p}(\Omega_R)}^{p}\lesssim \sum_{|\tau|=K^{-1}}\|F_\tau\|_{L^{p}(\Omega_R)}^{p}+K^{100}\sum_{\substack{|\tau_i|=K^{-1}\\ d(\tau_i, \tau_j)\ge \frac{1}{10}K^{-1}}}\|\min_{1\le i\le 4}|F_{\tau_i}|\|_{L^{p}(\Omega_R)}^{p},$$ where the first sum is over all caps $\tau$ with length $\frac{1}{K}$ and the second sum is over all quadruples of caps $(\tau_1, \tau_2, \tau_3, \tau_4)$ with length $\frac{1}{K}$ and $\frac{1}{10K}$ separated.  The narrow term we compute by rescaling and the induction hypothesis. Indeed, fix $\tau$ and let $T$ be an affine transformation rescaling $\tau$ to the full moment curve over $[\frac{1}{2}, 1]$ and $\wh{G}(\xi) = \wh{F}(T^{-1}\xi)$. Note $|\det T|\sim K^{10}$. This gives $$\|F_{\tau}\|_{L^{p}(\Omega_R)}^{p} \lesssim \frac{1}{K^{10(p-1)}} \|G\|_{L^{p}(T^{-1}(\Omega_R))}^{p}.$$ Note that $T^{-1}(\Omega_R)$ is roughly a $$\frac{R}{K}\times \frac{R}{K^2}\times \frac{R}{K^3}\times \frac{R^{\alpha}}{K^{4}}$$ regular box. We will trivially extend this to a $$\frac{R}{K}\times \frac{R}{K^2}\times \frac{R}{K^{\frac{1}{\beta}}}\times \frac{R^{\alpha}}{K^{\frac{\alpha}{\beta}}}$$ regular box. Applying the induction, we get \begin{align*}
        \|G\|_{L^{p}(T^{-1}(\Omega_R))}^{p}&\lesssim C(\frac{R}{K^{\frac{1}{\beta}}})\left(\frac{R}{K^{\frac{1}{\beta}}}\right)^{\beta \frac{p}{2}}\frac{|\Omega_R|}{K^{3+\frac{\alpha+1}{\beta}}}\max_{|\gamma^\prime|=KR^{-\beta}}\|G_{\gamma^\prime}\|_{\infty}^{p}\\
        &=  C(\frac{R}{K^{\frac{1}{\beta}}})\frac{R^{\beta\frac{p}{2}}}{K^p}|\Omega_R|\max_{|\gamma^\prime|=KR^{-\beta}}\|G_{\gamma^\prime}\|_{\infty}^{p}.
    \end{align*} By reversing the change of variables, we get $$\max_{|\gamma^\prime|=KR^{-\beta}}\|G_{\gamma^\prime}\|_{\infty}^{p}\lesssim K^{10p}\max_{\gamma}\|F_\gamma\|_{\infty}^{p}.$$
    We combine the previous inequalities to get $$\sum_{|\tau|=K^{-1}}\|F_{\tau}\|_{L^{p}(\Omega_R)}^{p}\lesssim 
    \frac{1}{K^{p-11}}C(R/K^{1/\beta})R^{\beta \frac{p}{2}}|\Omega_R|\max_{\gamma}\|F_\gamma\|_{\infty}^{p}.$$ Since $p\ge 11$, we get $$\frac{1}{K^{p-11}}\le 1,$$ and so, we get the correct bound for the narrow term. By a similar rescaling argument and using \eqref{eq: MultMainResult}, we also get $$K^{100}\sum_{\substack{|\tau_i|=K^{-1}\\ d(\tau_i, \tau_j)\ge \frac{1}{10}K^{-1}}}\|\min_{1\le i\le 4}|F_{\tau_i}|\|_{L^{p}(\Omega_R)}^{p}\lessapprox K^{O(1)}R^{\beta\frac{p}{2}}|\Omega_R|\max_{\gamma}\|F_{\gamma}\|_{\infty}^{p}.$$
\end{proof}

\section{Proof of Corollary \ref{cor:NewExpSums}}\label{sec:FinalCor}
Let $B= [0,N^a]^3\times [0,N^b]$ and $B_0= [0,N^a]^4$. By periodicity, it suffices to show $$\int_B \left|\sum_{n\sim N}c_ne(x\cdot \Phi\left(\frac{n}{N}\right))\right|^{p} dx \lessapprox N^{\frac{p}{2}}|B|.$$
We aim to apply Theorem \ref{thm:mainResult}. Let $R=N^a$, $\beta = \frac{1}{a}$, and $\alpha = \frac{b}{a}$. Note $\beta \in [\frac{1}{3}, \frac{1}{2}]$ and $\alpha \le \beta$. Moreover, let $f$ be a Schwartz function such that $f(x) \gtrsim 1_{B_0}(x)$ and $\wh{f}$ is supported on $[0, \frac{1}{100 R}]^4$. Let $$F(x)= \sum_{n\sim N}c_ne(x\cdot \Phi\left(\frac{n}{N}\right))f(x).$$ We get the desired Fourier support: $$\wh{F}(\xi) = \sum_{n\sim N} c_n \wh{f}(\xi - \Phi\left(\frac{n}{N}\right)).$$ Therefore, by Theorem \ref{thm:mainResult}, $$\int_B \left|\sum_{n\sim N}c_ne(x\cdot \Phi\left(\frac{n}{N}\right))\right|^{p} dx\lesssim \int_B |F|^p \lessapprox R^{\beta \frac{p}{2}}|B|= N^{\frac{p}{2}}|B|.$$

\bibliographystyle{plain}
\bibliography{ref}
\end{document}